\documentclass[12pt]{amsart}
\usepackage{amsrefs}
\usepackage[english]{babel}
\usepackage[utf8]{inputenc}
\usepackage{amsmath,relsize,hyperref}
\usepackage{graphicx}
\usepackage{amssymb}
\usepackage{amsthm}
\usepackage{float}

\numberwithin{equation}{section}

\usepackage{tikz-cd}
\tikzset{
  symbol/.style={
    draw=none,
    every to/.append style={
      edge node={node [sloped, allow upside down, auto=false]{$#1$}}}
  }
}

\usepackage{microtype}
\usepackage{mathrsfs}
\usepackage[colorinlistoftodos]{todonotes}
\usepackage{enumitem}
\usepackage{yfonts}
\usepackage{ dsfont }

\usetikzlibrary{matrix}
\usetikzlibrary{calc,intersections}
\usepackage{adjustbox}

\newtheorem{thm}{Theorem}[section]

\newtheorem{prop}[thm]{Proposition}

\newtheorem{remark}[thm]{Remark}

\newtheorem{cor}[thm]{Corollary}

\renewcommand{\L}{{\mathcal L}}

\newcommand{\B}{{\mathbb B}}
\renewcommand{\H}{{\mathcal H}}
\newcommand{\M}{{\mathcal M}}
\newcommand{\ie}{\emph{i.e.} }

\newcommand{\resp}{\emph{resp.} }
\newcommand{\Hbar}{\overline{{\mathcal H}}}
\newcommand{\Mbar}{\overline{{\mathcal M}}}
\newcommand{\HH}{{\mathbb H}}

\newcommand{\Z}{\mathbb{Z}}

\newcommand{\Cc}{\mathcal{C}}
\newcommand{\Aff}{\mathbb{A}}
\newcommand{\Pro}{\mathbb{P}}
\newcommand{\D}{\mathcal{D}}
\newcommand{\G}{\mathbb{G}}

\DeclareMathOperator{\im}{Im}

\DeclareMathOperator{\Spec}{Spec}
\DeclareMathOperator{\Pic}{Pic}

\DeclareMathOperator{\Char}{char}
\DeclareMathOperator{\Sym}{Sym}

\DeclareMathOperator{\GL}{GL}

\DeclareMathOperator{\PGL}{PGL}
\newcommand{\coloneq}{:=}

\usepackage[margin=1in]{geometry}

%% revisions and comments
%%%%%%%%%%%%%%%%%%%%%%%%%%%%%%%%%%%%%%%

%%%%%%%%%%%%%%%%%%%%%%%%%%%%%%%%%%%%%%%

\begin{document}

\title[Stacks of hyperelliptic Weierstrass points]{The integral Chow rings of the stacks of hyperelliptic Weierstrass points}
\author{Dan Edidin}
\address{Department of Mathematics, University of Missouri, Columbia, MO 65211}
\email{edidind@missouri.edu}

\author{Zhengning Hu}
\address{Department of Mathematics, University of Arizona, Tucson, AZ 85721}
\email{zhengninghu@arizona.edu}

\subjclass[2020]{14H10, 14C15}
\begin{abstract}
  We compute the integral Chow rings of the stacks $\H_{g,n}^w$ parametrizing hyperelliptic curves with  $n$ marked hyperelliptic Weierstrass points. We prove that the integral
  Chow ring of each of these stacks is generated as an algebra
  by any of the $\psi$-classes and that all relations live in degree one.
  \end{abstract}

\maketitle

\section{Introduction}

The purpose of this paper is to compute the  integral Chow rings
of the stacks $\H_{g,n}^w$  parametrizing smooth hyperelliptic curves
with $n$ marked Weierstrass points. Aside from their intrinsic interest,
products of these stacks naturally arise as boundary strata of proper
moduli stacks of curves such as $\Hbar_g$, parametrizing stable hyperelliptic curves. Thus, any program to compute the integral Chow ring of $\Hbar_g$ via the stratification method of \cite{Lar:21} will require our results as input.

We now describe the moduli stacks considered in this paper.
Let $\H_g$ denote the moduli stack of smooth hyperelliptic curves, and let
$\Cc_g \to \H_g$ be the universal hyperelliptic curve. A result of Kleiman and
L\o nsted \cite{KlLo:79} implies that there is a smooth Cartier
divisor $\H_{g,w} \subset \Cc_g$ parametrizing the hyperelliptic Weierstrass points in the fibers of $\Cc_g \to \H_g$. Moreover, if $\Char k \neq 2$, then $\H_{g,w}$ is finite and \'etale of degree $2g+2$ over $\H_g$. The stack $\H_{g,w}$ which we call the {\em stack of hyperelliptic Weierstrass points} parametrizes families of smooth hyperelliptic curves with a Weierstrass section.

Let $\Cc_{g,w} \to \H_{g,w}$ be the family of hyperelliptic curves obtained by base change along the finite \'etale morphism $\H_{g,w} \to \H_g$. The pullback
of the Weierstrass divisor $\H_{g,w}$ decomposes as the disjoint union of
the universal Weierstrass section and a divisor
$\H_{g,2}^w$ which is finite and \'etale over $\H^w_{g,1}:=\H_{g,w}$ of degree $2g+1$. Iterating this construction, we obtain a tower of finite \'etale covers
\begin{equation*}
  \H_{g,2g+2}^w \to \H_{g,2g+1}^w \to \ldots \to \H_{g,w} \to \H_g
\end{equation*}
where $\H_{g,n}^w$ parametrizes hyperelliptic curves with $n$-Weierstrass sections and the morphism $\H_{g,n+1}^w \to \H_{g,n}^w$ is finite and \'etale of degree $2g+2 -n$. Each of the stacks $\H_{g,n}^w$ is a $\mu_2$-gerbe over the stack $[\M_{0,2g+2}/S_{2g+2-n}]$ which parametrizes families of rational curves with $n$-marked points and a disjoint divisor of degree $2g+2-n$ which is \'etale over the base.

Putting this together we obtain the following cartesian diagram \eqref{diag.main} where the horizontal arrows are $\mu_2$-gerbes and the vertical arrows are finite, \'etale and representable.

\begin{equation} 
\begin{tikzcd}[row sep = small] \label{diag.main}
\H_{g,2g + 2}^w \ar[r] \ar[d] & \M_{0,2g + 2} \ar[d] \\
\vdots \ar[d] & \vdots \ar[d] \\
\H_{g,2}^w \ar[r] \ar[d] & \left[\M_{0,2g + 2}/S_{2g}\right] \ar[d]\\
\H_{g,1}^w\coloneq \H_{g,w} \ar[r] \ar[d] & \left[\M_{0,2g + 2}/S_{2g + 1}\right] \ar[d] \\
\H_g \ar[r] & \left[\M_{0,2g + 2}/S_{2g + 2}\right]
  \end{tikzcd}
  \end{equation} 

The integral Chow ring of $\H_g$ was computed in the papers \cite{EdFu:07} ($g$-even), \cites{FuVi:11, diL:18} ($g$-odd). In this paper we compute the integral Chow rings of the other stacks in diagram. Note that the rational Chow rings of the stacks $\H_{g,n}^w$ are trivial
because diagram~\eqref{diag.main} implies that they are gerbes over
quotients of $\M_{0,2g+2}$ which has trivial Chow groups. Thus, the problem of computing the integral Chow rings is all the more interesting.

{\bf Acknowledgments.} The results of this article were obtained while
the authors participated in the Semester program on Moduli and
Algebraic Cycles at the Mittag--Leffler Institute in Fall 2021. The
authors are grateful to Mittag--Leffler for financial support and to
the participants for providing an excellent working environment as
well as an opportunity to present a preliminary version of this work.
They are particularly grateful to Dan Petersen for suggesting the
proof of Proposition \ref{prop.danpetersen} and to Michele Pernice for
a number of helpful discussions. The first author was also supported
by Simons collaboration grant 708560 while preparing this work.

\subsection{Statement of results}\hspace*{\fill}

{\noindent\bf Conventions and notation.} Throughout this paper, we fix a
natural number $g \geq 2$ and work over an algebraically closed field
$k$ whose characteristic does not 
divide $2g + 2$. We will use the notation $\Aff(N)$ to denote the
affine space of binary forms of degree $N$. As such $\Aff(N) \simeq
\Aff^{N+1}$. There is a natural action of $\GL_2$ on $\Aff(N)$ given by
$A \cdot f(x) =f(A^{-1}x)$. The kernel of this action is the
diagonal subgroup $\mu_N$.

As noted above, the integral Chow ring of $\H_g$ was computed by Edidin--Fulghesu, Fulghesu--Viviani and Di Lorenzo. In order to contrast it with
our work we restate their results.

\begin{thm} \label{thm.Hg}
  The integral Chow ring of the stack of hyperelliptic curves is as follows.
\begin{itemize}
\item (Edidin--Fulghesu \cite{EdFu:07}) If $g$ is even, then
\[A^*(\H_g) = \Z[c_1,c_2]/(2(2g + 1)c_1,g(g - 1)c_1^2 - 4g(g + 1)c_2).\]

\item (Fulghesu--Viviani \cite{FuVi:11}, Di Lorenzo \cite{diL:18}) If $g$ is odd, then
\[A^*(\H_g) = \Z[\tau,c_2,c_3]/(4(2g + 1)\tau,8\tau^2 - 2g(g + 1)c_2,2c_3).\]
\end{itemize}
\end{thm}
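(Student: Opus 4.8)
The plan is to realize $\H_g$ as a global quotient stack and then apply equivariant intersection theory. Following Arsie and Vistoli, a smooth hyperelliptic curve of genus $g$ is a double cover of $\Pro^1$ branched over a reduced divisor of degree $2g+2$, hence is encoded by a binary form $f \in \Aff(2g+2)$ with nonvanishing discriminant, together with the square-root line bundle in which the coordinate $y$ lives. This presents $\H_g$ as a quotient stack $[U/G]$, where $U \subset \Aff(2g+2)$ is the open locus of forms with distinct roots and $G$ acts through its action on binary forms and on the $y$-coordinate. The group $G$ depends on the parity of $g$: when $g$ is even the linearization can be chosen so that $G = \GL_2$, while when $g$ is odd the square root of the relevant line bundle forces one to replace $\GL_2$ by a group built from $\PGL_2$ and an auxiliary $\G_m$ (the scaling of $y$). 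This parity dichotomy is the structural source of the two different answers. Once the presentation is fixed, $A^*(\H_g) = A^*_G(U)$.

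Next I would compute $A^*(BG) = A^*_G(\Aff(2g+2))$, using that $\Aff(2g+2)$ is an equivariant affine space and so contributes nothing beyond $A^*(BG)$ by homotopy invariance. For even $g$ this is simply $A^*(B\GL_2) = \Z[c_1,c_2]$, the polynomial ring on the Chern classes of the standard representation. For odd $g$ the computation factors through $A^*(B\G_m)$ and $A^*(B\PGL_2)$; here $A^*(B\PGL_2) = \Z[c_2,c_3]/(2c_3)$ in characteristic $\neq 2$ (the $2$-torsion class $c_3$ being responsible for the relation $2c_3 = 0$ in the final answer), while the $\G_m$ factor contributes the degree-one class $\tau$. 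This already produces all the generators: $c_1,c_2$ in the even case and $\tau, c_2, c_3$ in the odd case.

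The remaining relations come from excising the discriminant. Writing $\Delta = \Aff(2g+2) \setminus U$ for the (equivariant) discriminant hypersurface, the excision sequence
\[
A^{*-1}_G(\Delta) \xrightarrow{\ i_*\ } A^*_G(\Aff(2g+2)) \longrightarrow A^*_G(U) \longrightarrow 0
\]
identifies $A^*(\H_g)$ with $A^*(BG)$ modulo the ideal $I = \im(i_*)$ (an ideal by the projection formula). Two classes generate $I$. The degree-one generator is the fundamental class $[\Delta]$, which I would compute from the $G$-weight of the discriminant of a degree $2g+2$ binary form; since that discriminant has degree $2(2g+1)$ in the coefficients, this yields the relations $2(2g+1)c_1$ and $4(2g+1)\tau$ respectively. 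The degree-two generator comes from the singular locus of $\Delta$ — the forms with a triple root or with two distinct double roots — which must be pushed forward separately. I would handle these by parametrizing the non-reduced loci, for instance via the finite map $\Pro^1 \times \Aff(2g) \to \Aff(2g+2)$, $(\ell, h) \mapsto \ell^2 h$, and its analogues, then pushing forward fundamental classes while tracking the $\GL_2$-linearizations. This produces the degree-two relations $g(g-1)c_1^2 - 4g(g+1)c_2$ and $8\tau^2 - 2g(g+1)c_2$.

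The main obstacle is this last step: the discriminant is singular, so computing $i_*$ requires a careful analysis of its strata together with a resolution (or an excess-intersection calculation) that correctly accounts for the multiplicities and the equivariant normal bundles. In particular, proving that $I$ is generated in degrees one and two — that no further relations arise from deeper strata of $\Delta$ — is the delicate point, and it is where the explicit binary-form combinatorics and the parity-dependent linearization enter most forcefully.
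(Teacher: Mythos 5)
The paper does not actually prove this statement: Theorem \ref{thm.Hg} is quoted background, proved in the cited references (Edidin--Fulghesu for even $g$; Fulghesu--Viviani and Di Lorenzo for odd $g$), so the comparison here is against those proofs. For even genus your sketch is a faithful reconstruction of the actual argument: the Arsie--Vistoli presentation $\H_g \cong [\Aff_{sm}(2g+2)/(\GL_2/\mu_{g+1})]$, the parity dichotomy $\GL_2/\mu_{g+1}\cong \GL_2$ ($g$ even) versus $\G_m\times\PGL_2$ ($g$ odd), homotopy invariance reducing the ambient space to $A^*(BG)$, excision along the discriminant, and the parametrization $(\ell,h)\mapsto \ell^2h$ of the locus of non-reduced forms, yielding exactly one degree-one generator ($[\Delta]$, of weight $2(2g+1)$ in the coefficients) and one degree-two generator of the pushforward ideal, with deeper strata absorbed by Noetherian induction. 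Your identification of $2c_3$ as coming from $A^*(B\PGL_2)=\Z[c_2,c_3]/(2c_3)$ rather than from the discriminant is also correct.

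The genuine gap is in the odd-genus case, and it is precisely the point you treat as a routine variant: $\PGL_2$ is \emph{not special}, so the program ``compute $A^*_{\G_m\times\PGL_2}$ of the strata resolutions and push forward'' cannot be run directly the way it can for $\GL_2$. The resolutions of the discriminant strata in the even case are built from representations and projective bundles whose $\GL_2$-equivariant Chow rings and Gysin maps are immediately accessible; the corresponding $\PGL_2$-equivariant pushforwards are not, and this is exactly where the original Fulghesu--Viviani argument was delicate and why Di Lorenzo's contribution was needed. His solution is the $\GL_3$-counterpart technique recalled in Proposition \ref{prop.dilorenzo} of this paper: replace $[X/\PGL_2]$ by an equivalent stack $[Y/\GL_3]$, with $Y = V_{g+1}$ the space of pairs $(q,f)$ of a conic $q$ and a section $f$ of $\mathcal{O}_Q(g+1)$, where $\GL_3$ \emph{is} special and the projective-bundle and excision computations go through. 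Your proposal names the singularity of $\Delta$ as the main obstacle, but the singular strata are handled by the same stratification bookkeeping as in the even case; the obstacle specific to odd genus is the non-special structure group, and without an idea equivalent to the $\GL_3$-counterpart (or another trivialization of the $\PGL_2$-torsor issue) the plan stalls there. A cosmetic point: the two degree-one and degree-two relations you state match the theorem, but note they depend on the chosen linearization (the $\G_m$-weight $-2$ on $f$ is what turns $2(2g+1)$ into $4(2g+1)\tau$), so ``tracking the linearizations'' is load-bearing and should be made explicit.
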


\begin{thm}  \label{thm.m2g+2}
The integral Chow ring of $[\M_{0,2g + 2}/S_{2g + 2}]$ is
\[A^*([\M_{0,2g + 2}/S_{2g + 2}]) = \dfrac{\Z[\tau,c_2,c_3]}{(2\tau^2 - 2g(g + 1)c_2,2(2g + 1)\tau,2c_3,p(\tau,c_2,c_3))}\]
where
\begin{itemize}
\item if $g$ is odd, then
\[p(\tau,c_2,c_3) = (g + 1)^2\tau^{g}(\tau^2 + c_2)^{\frac{g + 1}{2}}c_2 + \tau^{\frac{g + 3}{2}}(\tau^3 + \tau c_2 - c_3)^{\frac{g + 1}{2}};\] 
\item if $g$ is even, then
\[p(\tau,c_2,c_3) = g(g+2)\tau^{g + 1}(\tau^2 + c_2)^{\frac{g}{2}}c_2 + \tau^{\frac{g}{2}}(\tau^3 + \tau c_2 - c_3)^{\frac{g + 2}{2}}.\]
\end{itemize}
\end{thm}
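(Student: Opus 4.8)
The plan is to present $[\M_{0,2g+2}/S_{2g+2}]$ as a quotient of the space of smooth binary forms and to extract its Chow ring from the equivariant excision sequence for the discriminant. A reduced degree-$(2g+2)$ divisor on a genus-zero curve is cut out by a binary form, well-defined up to the automorphisms of the curve and the scaling of the form, so I would identify
$$[\M_{0,2g+2}/S_{2g+2}] \;\cong\; \big[(\Aff(2g+2)\setminus\Delta)/G\big], \qquad G := \GL_2/\mu_{2g+2},$$
where $\Delta\subset\Aff(2g+2)$ is the discriminant hypersurface of forms with a repeated root. The structural input is that $G$ acts with trivial generic stabilizer (so we obtain the honest stack and not a gerbe over it) together with the isomorphism $G\cong\PGL_2\times\G_m$: the map $A\mapsto(\bar A,\det A)$ identifies $\GL_2/\mu_2$ with $\PGL_2\times\G_m$, and the residual $\mu_{g+1}$ is absorbed into the $\G_m$-factor. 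Since $\Aff(2g+2)$ is a $G$-representation, homotopy invariance gives $A^*_G(\Aff(2g+2))=A^*(BG)=A^*(B\PGL_2)[\tau]=\Z[\tau,c_2,c_3]/(2c_3)$, using the known presentation $A^*(B\PGL_2)=\Z[c_2,c_3]/(2c_3)$ with $\tau=c_1$ of the $\G_m$-factor. This already produces the generators $\tau,c_2,c_3$ and the relation $2c_3$.

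Next I would run the excision sequence
$$A^{*-1}_G(\Delta)\xrightarrow{\ i_*\ }A^*(BG)\xrightarrow{\ j^*\ }A^*\big([(\Aff(2g+2)\setminus\Delta)/G]\big)\to 0,$$
so the target ring is $A^*(BG)/I$ with $I=\operatorname{im}(i_*)$. The first relation is the fundamental class $[\Delta]$: the discriminant of a degree-$(2g+2)$ form is homogeneous of degree $2(2g+1)$ in the coefficients and $\PGL_2$-invariant, so in degree one its equivariant class is $2(2g+1)\tau$, yielding the relation $2(2g+1)\tau$. To reach the higher relations I would resolve $\Delta$ by the incidence locus $\widetilde\Delta=\{(p,f)\in\Pro^1\times\Aff(2g+2): f\text{ has a double root at }p\}$, which is $G$-equivariantly the rank-$(2g+1)$ subbundle $\{\,\ell_p^2\cdot h\,\}$ of the trivial bundle over $\Pro^1$, with $\G_m$ acting trivially on $\Pro^1$. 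Since $\widetilde\Delta\to\Delta$ is birational (an isomorphism over forms with a single double root), the projection formula lets me compute part of $i_*$ by pushing forward from the $\Pro^1$-bundle $\widetilde\Delta$ via $A^*_G(\widetilde\Delta)=A^*_G(\Pro^1)$, which is free of rank two over $A^*(BG)$ on $1$ and the equivariant hyperplane class $\zeta$. Writing $[\widetilde\Delta]$ as the top Chern class of the rank-two quotient bundle and integrating over $\Pro^1$, I expect $\pi_*(1)=[\Delta]=2(2g+1)\tau$ and $\pi_*(\zeta)$ to give the degree-two relation $2\tau^2-2g(g+1)c_2$, the coefficient $2g(g+1)$ emerging from the Segre-class manipulation on the $\Pro^1$-bundle.

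The heart of the computation, and the main obstacle, is the high-degree relation $p(\tau,c_2,c_3)$ of degree $2g+3$. This cannot come from the birational locus alone; it must arise from the singular locus of $\Delta$ (forms with two double roots or a higher-order root), where the resolution fails to be an isomorphism, pushed forward by an iterated Gysin computation down the stratification of $\Delta$ by root-multiplicity partitions. Pinning down the exact form of $p$, and in particular the dichotomy between $g$ odd and $g$ even, will require expressing the Chern roots of the relevant bundles over $\Pro^1$ as symmetric functions whose parity reflects the parity of $2g+2$; this is where the $2$-torsion class $c_3$ from $\PGL_2$ genuinely interacts with the computation. The final step is a completeness argument: an induction on the stratification showing that the contributions of the deeper strata already lie in the ideal generated by the three classes found, together with a Poincar\'e-series comparison against $A^*(BG)$ confirming no relation is missing. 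As an independent check, the $\mu_2$-gerbe $\H_g\to[\M_{0,2g+2}/S_{2g+2}]$ of diagram~\eqref{diag.main} pulls $\tau$ back to $2\tau$, and one verifies that $p(2\tau,c_2,c_3)$ is a consequence of the relations of Theorem~\ref{thm.Hg}; this is consistent with the fact that the base stack carries the extra relation $p$ that becomes redundant on the gerbe, and so explains why the answer is strictly more complicated than $A^*(\H_g)$.
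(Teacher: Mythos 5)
Your setup coincides with the paper's through the identification $[\M_{0,2g+2}/S_{2g+2}]\cong[\Aff_{sm}(2g+2)/(\G_m\times\PGL_2)]$, the presentation $A^*_{\G_m\times\PGL_2}(\Aff(2g+2))=\Z[\tau,c_2,c_3]/(2c_3)$, and the relation $2(2g+1)\tau=[\Delta']$; but the proposal breaks at the resolution step. The incidence variety $\widetilde\Delta$ is a $\Pro^1$-bundle only non-equivariantly: $\mathcal{O}_{\Pro^1}(1)$ admits no $\PGL_2$-linearization (only the even twists $\mathcal{O}(2k)$ do), so there is no equivariant hyperplane class $\zeta$, the projective bundle theorem does not apply, and $A^*_{\G_m\times\PGL_2}(\Pro^1)$ is \emph{not} free of rank two over $A^*(BG)$ --- reduction to the Borel subgroup of $\PGL_2$ gives $A^*_{\G_m\times\PGL_2}(\Pro^1)\cong\Z[\tau,t]$, which is not free over $\Z[\tau,c_2,c_3]/(2c_3)$ (for instance $c_3$ restricts to $0$). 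Hence your computation of $\pi_*(1)$ and $\pi_*(\zeta)$, and with it the derivation of $2\tau^2-2g(g+1)c_2$, is ill-founded; this failure is precisely the non-specialness of $\PGL_2$ that the entire computation must route around.

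Moreover, the heart of the theorem --- the degree-$(2g+3)$ relation $p$ and the completeness of the ideal --- is left as an expectation, and your guess about its origin (iterated Gysin pushforwards from the deeper root-multiplicity strata of $\Delta'$) is not how it arises, nor does your sketch contain a viable completeness argument: for a non-special group one cannot invoke the usual surjectivity statements stratum by stratum, and a ``Poincar\'e-series comparison'' presupposes knowing the answer. The paper instead follows Di Lorenzo \cite{diL:18}: replace the $\G_m\times\PGL_2$-scheme $\Aff(2g+2)$ by its $\G_m\times\GL_3$-counterpart $V_{g+1}$, the space of pairs $(q,f)$ with $q$ a conic with zero locus $Q$ and $f$ a section of $\mathcal{O}_Q(g+1)$, so that $[(V_{g+1}\setminus D')/(\G_m\times\GL_3)]\cong[\Aff_{sm}(2g+2)/(\G_m\times\PGL_2)]$ with $\GL_3$ special. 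The only change from \cite{diL:18} is the weight of the $\G_m$-action: the torsor $V_{g+1}\setminus\sigma_0\to\Pro(V_{g+1})$ now has associated line bundle $\mathcal{V}^{-1}\otimes\mathcal{O}(-1)$, imposing $\tau+h_{g+1}=0$. The relations $2(2g+1)\tau$, $2\tau^2-2g(g+1)c_2$, $2c_3$ then come from the class of the projectivized singular locus $D$, while $p(\tau,c_2,c_3)=p_{g+1}(-\tau)$ is simply the monic degree-$(2g+3)$ relation furnished by the $\GL_3$-equivariant projective bundle theorem for $\Pro(V_{g+1})$ after substituting $h_{g+1}=-\tau$ --- not a contribution of singular strata of the discriminant. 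Finally, one must verify that $p$ is not redundant, i.e.\ does not lie in the ideal $(2\tau^2-2g(g+1)c_2,\,2(2g+1)\tau,\,2c_3)$, which the paper checks via \cite[Proposition 6.5]{FuVi:11}; your proposal has no counterpart to this step, and your closing consistency check against $A^*(\H_g)$ cannot substitute for it, since pullback along the $\mu_2$-gerbe only constrains, and does not detect, relations killed by $\tau\mapsto 2\tau$.
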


Once we mark at least one Weierstrass point, the presentation of the Chow rings  no longer depends on the parity of the genus $g$.
\begin{thm}\label{thm.Hgw}
The integral Chow rings of $\H_{g,w}$ and $[\M_{0,2g + 2}/S_{2g + 1}]$ are 
\[A^*(\H_{g,w}) = \dfrac{\Z[\psi]}{(4g(2g + 1)\psi)},\quad A^*([\M_{0,2g + 2}/S_{2g + 1}]) = \dfrac{\Z[l]}{(2g(2g + 1)l)},\]
respectively. Here $\psi$ is the restriction of the $\psi$-class on $\H_{g,1}$ to $\H_{g,w}$.
\end{thm}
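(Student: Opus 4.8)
The plan is to realize each stack as the quotient of a discriminant complement by a one--dimensional torus and to read off the relation from the localization sequence in $\G_m$--equivariant Chow theory. Because a Weierstrass point has been marked, I would rigidify by moving it to $\infty \in \Pro^1$. For $[\M_{0,2g+2}/S_{2g+1}]$ the remaining data is a degree $2g+1$ reduced divisor on $\Aff^1 = \Pro^1 \setminus \{\infty\}$, i.e. a monic squarefree polynomial $f$ of degree $2g+1$, together with the automorphisms $x \mapsto ax+b$ fixing $\infty$, which form the affine group $B = \G_a \rtimes \G_m$. Quotienting first by the translations $\G_a$ (being unipotent and special, this leaves equivariant Chow groups unchanged) normalizes $f$ to be centered, so $[\M_{0,2g+2}/S_{2g+1}] \cong [W/\G_m]$, where $W \subset \Aff^{2g} = \Spec k[a_2,\dots,a_{2g+1}]$ is the squarefree locus and $\G_m$ scales $a_i$ with weight $i$. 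For $\H_{g,w}$ I would use the odd Weierstrass model $y^2 = f(x)$ with the same $f$; here the group also records the scaling $y \mapsto cy$ subject to $c^2 = a^{2g+1}$. Since $\gcd(2,2g+1) = 1$, this group is again a torus $\G_m$ with parameter $s$, where $a = s^2$ and $c = s^{2g+1}$, acting on the same $\Aff^{2g}$ with weight $2i$ on $a_i$. The residual involution $s \mapsto -s$ (that is, $y \mapsto -y$) acts trivially on $f$ and is exactly the $\mu_2$--gerbe, while the substitution $a = s^2$ is the algebraic shadow of the double cover ramifying at the marked point.

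In both cases $A^*_{\G_m}(\Aff^{2g}) = \Z[t]$ is the polynomial ring on the weight--one class $t$, which I would identify with $l$ (resp. with the $\psi$--class). The localization sequence $A^*_{\G_m}(\Delta) \xrightarrow{i_*} A^*_{\G_m}(\Aff^{2g}) \to A^*_{\G_m}(W) \to 0$, with $\Delta$ the centered discriminant hypersurface, then presents the Chow ring as $\Z[t]$ modulo the image of $i_*$. The discriminant of a degree $d = 2g+1$ polynomial is weighted homogeneous of weight $d(d-1) = 2g(2g+1)$ in the coefficients $a_i$ with $\deg a_i = i$, so its fundamental class is $[\Delta] = 2g(2g+1)\,t$ on the base; after doubling all weights it becomes $[\Delta] = 4g(2g+1)\,\psi$ on $\H_{g,w}$. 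This already produces the asserted degree--one relations.

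The main obstacle is to show that the image of $i_*$ is exactly the principal ideal $([\Delta])$, i.e. that there are no relations in higher degree. By the projection formula this is equivalent to the surjectivity of $i^* \colon \Z[t] \to A^*([\Delta/\G_m])$, i.e. to the statement that the equivariant Chow ring of the discriminant is generated over $\Z$ by the restriction of $t$ together with the fundamental class. I would prove this by stratifying $\Delta$ according to the partition recording the multiplicities of the roots of $f$: each locally closed stratum parametrizes polynomials of a fixed root--multiplicity type and is, up to the torus action, built out of spaces of squarefree polynomials, all of which have trivial Chow groups. Hence each stratum quotient $[\Delta_\lambda/\G_m]$ has Chow ring generated by $t$, and the localization sequences for the stratification assemble, by descending induction on codimension, to the same statement for $[\Delta/\G_m]$. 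Verifying that no new $\Z[t]$--module generators are introduced at any stage of this induction is the technical heart of the computation.

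Granting this, both rings follow at once: $A^*([\M_{0,2g+2}/S_{2g+1}]) = \Z[l]/(2g(2g+1)l)$ and $A^*(\H_{g,w}) = \Z[\psi]/(4g(2g+1)\psi)$, with all relations in degree one. As a consistency check, the gerbe projection $\pi \colon \H_{g,w} \to [\M_{0,2g+2}/S_{2g+1}]$ pulls the class $l$ (weight one in $a$) back to the class $2\psi$ (since $a = s^2$), so $\pi^* l = 2\psi$; this records that the hyperelliptic double cover is ramified of order $2$ at the Weierstrass point, and substituting $l = 2\psi$ into the base relation $2g(2g+1)l = 0$ recovers exactly $4g(2g+1)\psi = 0$.
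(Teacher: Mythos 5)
Your rigidified model is an attractive and essentially correct repackaging of the paper's setup: the paper keeps the rank-two tori $D_2/\mu_{g+1}$ and $D_2/\mu_{2g+2}$ acting on $\Aff(2g+1)$ minus the discriminant and minus the hyperplane $L = \{h(0,1)=0\}$, whereas your monic normalization absorbs the scaling torus and the hyperplane condition at once, and your substitution $a = s^2$, $c = s^{2g+1}$ correctly encodes both the $\mu_2$-gerbe and the weight doubling relating the two rings (your check $\pi^* l = 2\psi$ matches the paper's $4g(2g+1)$ versus $2g(2g+1)$). But there is a genuine gap exactly where you flag ``the technical heart'': you never prove that the image of $i_*$ is the principal ideal $([\Delta])$, equivalently that $A^*_{\G_m}(\Delta)$ is generated over $\Z[t]$ by the fundamental class, and this cannot be waved through by saying the strata are built from squarefree loci with trivial Chow groups. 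Trivial ordinary Chow groups of a stratum do not control its equivariant Chow groups as a $\Z[t]$-module; you would need each stratum to be (an affine bundle over) an invariant open subset of a representation, and even granting that, the localization sequences only show $A^*_{\G_m}(\Delta)$ is generated by the classes of all stratum closures, so you must still prove that every class $[\overline{\Delta}_\lambda] = d_\lambda t^{c_\lambda}$ in $\Z[t]$ has $d_\lambda$ divisible by $2g(2g+1)$ --- a nontrivial family of divisibilities your sketch does not address. The cautionary precedent is precisely the input the paper uses instead: by Edidin--Fulghesu \cite[Proposition 4.2]{EdFu:07}, imported here through Proposition \ref{prop.disc}, in the non-rigidified model the ideal cut out by the discriminant is \emph{not} principal --- it requires the fundamental class $\alpha_{1,0}$ \emph{and} a second generator $\alpha_{1,1}$ in degree two --- and your principality claim holds only because $\alpha_{1,1}$ happens to lie in the ideal generated by $\alpha_{1,0}$ and the hyperplane class $[L]_T$ (the avatar of your monic normalization), which the paper verifies by explicit identities such as $\alpha_{1,1} = l_2\, g_1 + [(g-2)l_1 - g l_2]\, g_2$. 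So your answer is correct, but the deferred step is where all the content lives, and your inductive assertion, taken at face value, fails in the closely parallel un-rigidified setting.

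Two smaller repairs. First, centering $f$ requires inverting $2g+1$, which the standing hypothesis that $\Char k$ does not divide $2g+2$ does not give (e.g.\ $\Char k = 3$, $g = 4$); rather than slicing, quotient out $\G_a$ by the affine-bundle invariance of equivariant Chow groups --- exactly how the paper passes from Borel subgroups to tori --- and work on the full monic squarefree locus in $\Aff^{2g+1}$ with weights $1,\dots,2g+1$ (doubled for $\H_{g,w}$). Second, the identification of $t$ with $\psi$ is asserted rather than proven: the paper establishes it by computing the $T$-weight of the equation $s=0$ cutting $\H_{g,w}$ out of $\H_{g,1}$, and in your model you should compute the weight of the cotangent line at the point at infinity of $y^2 = f(x)$ (the local parameter $y/x^{g+1}$ has $s$-weight $-1$), giving $\psi = \pm t$ and hence that $\psi$ generates.
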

\begin{thm} \label{thm.Hg2w}
The integral Chow rings of $\H_{g,2}^w$ and $[\M_{0,2g + 2}/S_{2g}]$ are
\[A^*(\H_{g,2}^w) = \dfrac{\Z[\psi]}{(4g\psi)},\quad A^*([\M_{0,2g + 2}/S_{2g}]) = \dfrac{\Z[l]}{(2gl)},\]
respectively. Here $\psi$ is the restriction of either of the two $\psi$-classes,
$\psi_1, \psi_2$ on $\H_{g,2}$ to $\H_{g,2}^w$.
\end{thm}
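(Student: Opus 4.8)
The plan is to reduce both computations to explicit quotient presentations obtained from the two marked points, to compute the Chow ring of $[\M_{0,2g+2}/S_{2g}]$ by equivariant localization, and then to transfer the answer across the horizontal $\mu_2$-gerbe in \eqref{diag.main} to reach $\H_{g,2}^w$. Concretely, I would use the two ordered marked points to rigidify $\Pro^1$, placing them at $0$ and $\infty$. The residual automorphism group of $(\Pro^1,0,\infty)$ is the torus $T\cong\G_m$, and the remaining $2g$ unordered points become the roots of a monic degree-$2g$ polynomial $h$ with nonzero constant term and nonvanishing discriminant. This identifies $[\M_{0,2g+2}/S_{2g}]$ with $[V/\G_m]$, where $V=\{(a_0,\dots,a_{2g-1})\in\Aff^{2g}: a_0\neq 0,\ \Delta\neq 0\}$ parametrizes monic forms $h(t)=\prod(t-p_i)$, and $T=\G_m$ acts by rescaling the roots, so that the coefficient $a_k$ carries weight $k-2g$.

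To compute $A^*([V/\G_m])$ I would apply the localization (excision) sequence in equivariant Chow theory, starting from $A^*_{\G_m}(\Aff^{2g})=A^*(B\G_m)=\Z[l]$ and successively removing the two invariant hypersurfaces. The coordinate hyperplane $V(a_0)$ carries weight $-2g$, since $a_0=\prod p_i$ vanishes exactly when one of the $2g$ points collides with the marked point at $0$, and it contributes the relation $2g\,l=0$. The discriminant hypersurface $V(\Delta)$ has weight $-2g(2g-1)$, because each of the $2g$ roots scales with weight $\pm1$ and $\Delta=\prod_{i<j}(p_i-p_j)^2$ involves $2\binom{2g}{2}=2g(2g-1)$ such factors. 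As $2g\mid 2g(2g-1)$, the discriminant relation is absorbed by the one coming from $V(a_0)$: a two-step localization shows that removing $V(\Delta)$ from $\{a_0\neq 0\}$ pushes forward a class divisible by $2g\,l$, hence imposes no further relation. Thus $A^*([\M_{0,2g+2}/S_{2g}])=\Z[l]/(2g\,l)$, with $l$ the first Chern class of the $T$-representation, i.e. the $\psi$-class at a marked point. As a sanity check, the same method applied to one marked point removes only $V(\Delta)$, of weight $\pm 2g(2g+1)$, recovering $\Z[l]/(2g(2g+1)l)$ as in Theorem~\ref{thm.Hgw}.

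Finally I would transfer this across the $\mu_2$-gerbe $\H_{g,2}^w\to[\M_{0,2g+2}/S_{2g}]$ using Proposition~\ref{prop.danpetersen}. The marked point of $\H_{g,2}^w$ is a Weierstrass point, hence a ramification point of the degree-two map to $\Pro^1$; in a local coordinate the cover is $t=s^2$, so the base cotangent line pulls back to the square of the cotangent line upstairs, giving $l=2\psi$, where $\psi$ is the $\psi$-class on $\H_{g,2}^w$. The gerbe therefore contributes the single new class $\psi$ subject only to $2\psi=l$, and substituting into $\Z[l]/(2g\,l)$ yields
\[
A^*(\H_{g,2}^w)=\Z[l,\psi]/(l-2\psi,\ 2g\,l)=\Z[\psi]/(4g\,\psi).
\]
The same substitution turns the relation $2g(2g+1)\,l$ of Theorem~\ref{thm.Hgw} into $4g(2g+1)\,\psi$, confirming $l=2\psi$ independently.

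The main obstacle is twofold. First, one must carry out the weight bookkeeping for the discriminant precisely and, more importantly, verify through the localization sequence that no relations arise in degree $\geq 2$ — that is, that the class of each removed hypersurface already lies in the degree-one ideal, so that the ring is exactly $\Z[l]/(2g\,l)$ and not a further quotient. Second, one must justify the gerbe passage rigorously: confirming that on the $\mu_2$-gerbe of \eqref{diag.main} the class $l$ becomes $2\psi$ for a genuine class $\psi$, and that Proposition~\ref{prop.danpetersen} guarantees this is the only new relation introduced. I expect the cleanest route is to perform the localization computation $\G_m$-equivariantly on the base $[\M_{0,2g+2}/S_{2g}]$ and then invoke the gerbe proposition, rather than setting up the full automorphism group of the hyperelliptic family (with its $y$-scaling and hyperelliptic involution) at once, which would produce the factor $4g$ directly but at the cost of a more delicate weight analysis.
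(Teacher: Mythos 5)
Your first step has a genuine gap at the discriminant. You treat $V(\Delta)$ as if it were a smooth invariant hypersurface, so that excising it could at worst impose the relation given by its equivariant fundamental class $\pm 2g(2g-1)\,l$, and you propose to rule out relations in degree $\geq 2$ by checking that ``the class of each removed hypersurface already lies in the degree-one ideal.'' That criterion is not the right one. In the localization sequence $A^*_{\G_m}(\Delta\cap\{a_0\neq 0\})\to A^*_{\G_m}(\{a_0\neq 0\})\to A^*_{\G_m}(V)\to 0$ one must kill the \emph{entire} image of the pushforward, and because $\Delta$ is singular (stratified by the multiple-root loci) its equivariant Chow groups are not generated by restrictions from the ambient space; the projection formula therefore does not reduce the image to the principal ideal $([\Delta]_T)$. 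This is precisely what the paper's Proposition \ref{prop.disc}, resting on \cite{EdFu:07}*{Proposition 4.2}, supplies: the image is generated by the degree-one class $\alpha_{1,0}$ \emph{and} a genuinely new degree-two class $\alpha_{1,1}$, and in each case the paper must separately verify that $\alpha_{1,1}$ (and the projective-bundle relation $p(\xi)$) lies in the ideal of the degree-one relations. Without this input, your argument only shows that $A^*([\M_{0,2g+2}/S_{2g}])$ is a quotient of $\Z[l]/(2gl)$, possibly by further classes in degrees $\geq 2$ (e.g.\ a ring of the form $\Z[l]/(2gl,\,dl^2)$ is not excluded); the divisibility $2g \mid 2g(2g-1)$ says nothing about these. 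Your monic-slice presentation $[V/\G_m]$ itself is fine --- it is a rank-one-torus version of the presentation in Proposition \ref{action-2pt}, and your weight bookkeeping ($a_0$ of weight $-2g$, discriminant of weight $-2g(2g-1)$, and the $n=1$ sanity check $2g(2g+1)$) agrees with the paper's.

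Your second step is a genuinely different route from the paper's, and it is attractive: the paper computes $A^*(\H_{g,2}^w)$ directly as a rank-two-torus-equivariant Chow ring, separately for $g$ even and odd, and only afterwards identifies the generator with $\psi$ via the $T$-weight of the defining equation $s_\infty=0$; you instead transfer across the $\mu_2$-gerbe, which produces the factor $4g=2\cdot 2g$ uniformly in the parity of $g$ and is the same mechanism the paper uses for $n\geq 3$ (Propositions \ref{prop.danpetersen} and \ref{prop.gerbe}). But as written it is only a sketch: Proposition \ref{prop.danpetersen} identifies the gerbe as the square root of $\M$, while the assertion that the gerbe ``contributes the single new class $\psi$ subject only to $2\psi=l$'' is the root-gerbe Chow formula $A^*\bigl(\sqrt{\M/X}\bigr)=A^*(X)[t]/(2t-c_1(\M))$, which you must prove, e.g.\ by presenting the gerbe as the quotient by $\G_m$ (acting with weight $2$) of the complement of the zero section in the total space of $\M$. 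You also need the identifications $c_1(\M)=\pm l$ on the base and $t=\psi$ upstairs; your local computation $t=s^2$ is the right idea, but it should be promoted to the line-bundle statement $\sigma^*\pi^*\mathcal{O}_P(-D_{2g+2})\cong\bigl(\sigma^*\mathcal{O}_C(-\sigma)\bigr)^{\otimes 2}$, using $\pi^*D_{2g+2}=2R$ and $R=\sigma+R'$ with $R'$ disjoint from $\sigma$ (char $k\neq 2$). These points are fixable; the discriminant gap in the first step is the substantive one.
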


\begin{thm}\label{thm.Hg3w}
For $3 \leq n \leq 2g + 2$, the integral Chow rings of $\H_{g,n}^w$ and $[\M_{0,2g + 2}/S_{2g + 2 - n}]$ are
\[A^*(\H_{g,n}^w) = \Z[\psi]/(2\psi),\quad A^*([\M_{0,2g + 2}/S_{2g + 2 - n}]) = \Z.\] Here $\psi$ is the restriction of any of the $\psi$-classes on $\H_{g,n}$ to $\H_{g,n}^w$.
\end{thm}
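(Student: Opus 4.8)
The plan is to compute the two rings separately, first handling the base $[\M_{0,2g + 2}/S_{2g + 2 - n}]$ and then bootstrapping to $\H_{g,n}^w$ through the $\mu_2$-gerbe structure of diagram~\eqref{diag.main}. The conceptual point, already foreshadowed by the passage from Theorem~\ref{thm.Hg2w} to the present statement, is that marking a \emph{third} Weierstrass point rigidifies the $\PGL_2$-action: for $n \geq 3$ the base stack becomes an honest smooth variety with vanishing higher Chow groups, which is exactly what collapses the relation and removes the class $l$.

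First I would show $A^*([\M_{0,2g + 2}/S_{2g + 2 - n}]) = \Z$. Since $n \geq 3$, three of the marked sections can be used to send three points of $\Pro^1$ to $0,1,\infty$; because $\PGL_2$ acts simply transitively on ordered triples of distinct points, this rigidification identifies $[\M_{0,2g + 2}/S_{2g + 2 - n}]$ with $[U/S_{2g + 2 - n}]$, where $U$ parametrizes the remaining $n - 3$ labeled points together with the $2g + 2 - n$ unlabeled points, all lying in $\Pro^1 \setminus \{0,1,\infty\}$ and mutually distinct. As the unlabeled points are distinct, no nontrivial permutation fixes a configuration, so the $S_{2g + 2 - n}$-action is free and the quotient is a smooth variety $M$. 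Recording the unlabeled points as the roots of a monic polynomial of degree $2g + 2 - n$ identifies $M$ with the open subscheme of $\Aff^{2g - 1}$ complementary to the discriminant and incidence loci where points collide or meet $\{0,1,\infty\}$. By the localization sequence together with $A_{<N}(\Aff^N) = 0$, every open subscheme of affine space has trivial Chow ring, so $A^*(M) = \Z$.

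Next I would compute $A^*(\H_{g,n}^w)$ by exhibiting the gerbe $\H_{g,n}^w \to M$ as trivial. Writing a rigidified hyperelliptic curve as a double cover $y^2 = s$ with $s$ a binary form of degree $2g + 2$ and $L = \mathcal O_{\Pro^1}(g + 1)$, the group $\G_m = \Aut(L)$ scales $s$ with weight $2$; separating the leading scalar of $s$ from its roots presents $\H_{g,n}^w$ as $[(\G_m \times M)/\G_m]$, with $\G_m$ acting trivially on $M$ and by weight $2$ on the scalar factor. Since the weight-$2$ action on $\G_m$ has kernel exactly $\mu_2$ and is otherwise transitive, this quotient is $M \times B\mu_2$, whence $A^*(\H_{g,n}^w) = A^*(M) \otimes \Z[t]/(2t) = \Z[t]/(2t)$ with $t$ the weight-one generator. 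Finally I would identify $t$ with $\psi$: at a Weierstrass point the function $y$ is a uniformizer on the curve, so the cotangent line is spanned by $dy$, on which $\G_m$ acts with weight one; hence $\psi$ is the weight-one generator and $A^*(\H_{g,n}^w) = \Z[\psi]/(2\psi)$.

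The main obstacle is the bookkeeping in the quotient presentation: correctly tracking the $\mu_2$ versus $\G_m$ weights so that the gerbe genuinely trivializes and $\psi$ lands in weight one rather than weight zero. Equivalently, one must verify that $\psi$ is the \emph{nonzero} class of $A^1(\H_{g,n}^w) = \Z/2$, i.e.\ that the cotangent line at a branch point carries odd $\mu_2$-weight, since this is precisely what distinguishes the ring $\Z[\psi]/(2\psi)$ from the trivial ring $\Z$ of the base. I expect this weight computation, together with the verification that $M$ is literally an open subscheme of affine space, to be the only delicate points; alternatively, the triviality of the gerbe follows abstractly from $\Pic(M) = 0$ once a weight-one line bundle—the defining bundle $L$ of the double cover—is produced.
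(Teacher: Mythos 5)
Your proposal is correct, and its first half is exactly the paper's argument: the paper also rigidifies three marked points to $0,1,\infty$, observes that $S_{2g+2-n}$ then acts freely by permuting the remaining points, and identifies the quotient with an open subscheme of $\Aff^{2g-1}$ (via $\Sym^{2g+2-n}\Aff^1 \cong \Aff^{2g+2-n}$), so $A^*([\M_{0,2g+2}/S_{2g+2-n}]) = \Z$ by localization. For the stack $\H_{g,n}^w$ itself you take a genuinely different route. The paper uses the explicit scaling presentation only for $n=3$ (Proposition \ref{action-3pt}: $\H_{g,3}^w = [U_{w,3}/\G_m]$ with weight $-2$ action, each hyperplane class equal to $-2l$), and for $n>3$ it argues structurally: by Proposition \ref{prop.danpetersen} the map $\H_{g,n}^w \to [\M_{0,2g+2}/S_{2g+2-n}]$ is the $\mu_2$-gerbe of square roots of the line bundle $\M$, which is trivial because line bundles on opens of affine space are trivial (Proposition \ref{prop.gerbe}), giving $A^*(\H_{g,n}^w) = A^*(B\mu_2) = \Z[l]/(2l)$; it then needs a separate induction on $n$ along the forgetful maps, anchored at $n=3$, to show $\psi \neq 0$. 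You instead extend the $n=3$ quotient presentation uniformly to all $n \geq 3$, writing $\H_{g,n}^w \cong [(\G_m \times M)/\G_m]$ with the weight-$2$ action on the $\G_m$ factor, hence $\cong M \times B\mu_2$, and you identify $\psi$ in one stroke from the weight of $dy$: since $y^2 = f$ and $f$ has weight $\pm 2$, the uniformizer $y$ at any Weierstrass point has odd weight, so $\psi = \pm l$ is the nonzero class of $A^1 = \Z/2$ for every marked point, with no induction. This is more self-contained; what it costs, and what you should spell out, is (i) the presentation $\H_{g,n}^o = [\HH_{sm}(2g+2,n)/\G_m]$ for $n > 3$, which follows as in the paper's $n=3$ case since three sections already rigidify the twisted $\Pro^1$, and (ii) the globality of the splitting $U \cong \G_m \times M$: this holds because forms in your open set do not vanish at $(1:0)$, so one may factor $h = (x_1 - p_4x_0)\cdots(x_1 - p_nx_0)\cdot c\,h_0$ with $h_0$ monic of degree $2g+2-n$ and $c \in \G_m$, the $\G_m$-action touching only $c$. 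What the paper's route buys is Proposition \ref{prop.danpetersen} itself, a square-root-gerbe description of independent interest. Two minor points: the identification $[\G_m/\G_m] \cong B\mu_2$ for the weight-$2$ action uses $\Char k \neq 2$, guaranteed by the standing assumption $\Char k \nmid 2g+2$; and with the paper's sign conventions $\psi = -l$, which is harmless since $-l \equiv l \bmod 2l$.
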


\section{Proof of Theorem \ref{thm.m2g+2}}

\subsection{Description of $[\M_{0,2g + 2}/S_{2g + 2}]$ as a quotient
  by $\G_m \times \PGL_2$}
The quotient stack $[\M_{0,2g + 2}/S_{2g + 2}]$ can be
identified with the stack $\D_{2g+2}$ parametrizing pairs $(P \to S,
D_{2g+2} \subset P)$ where $P \to S$ is a twisted $\Pro^1$ over $S$ and $D_{2g+2}$ is an
effective Cartier divisor which is finite and \'etale over $S$ of degree $2g + 2$. By \cite[Proposition 3.4]{GoVi:06}, the algebraic stack $\D_{2g+2}$ can be
identified with the quotient stack
$[\Aff_{sm}(2g+2)/(\GL_2/\mu_{2g+2})]$, where $\Aff_{sm}(2g+2)$ is
the affine space of binary forms of degree $2g+2$ with distinct roots, and the
action on $\Aff_{sm}(2g+2)$ is given by $A \cdot f(x) =
f(A^{-1}x)$.

\begin{remark}
By \cite[Theorem 4.1]{ArVi:03}, the stack $\H_g$ of smooth hyperelliptic curves of genus $g$ is isomorphic to the quotient stack
$[\Aff_{sm}(2g+2)/(\GL_2/\mu_{g+1})]$ with action defined by $A \cdot f(x) = f(A^{-1}x)$. Consider the 
short exact sequence
$$ 1\to \mu_2 \to \mu_{2g+2} \to \mu_{g+1} \to 1$$ where the second arrow is
given by 
$\alpha \mapsto \alpha^2$. There is an induced map of quotient groups
$\GL_2/\mu_{g+1} \to \GL_2/\mu_{2g+2}$ which is a $\mu_2$-torsor. This in turn induces a map of quotient stacks
$$\H_g = [\Aff_{sm}(2g+2)/(\GL_2/\mu_{g+1})] \to \D_{2g + 2} = [\Aff_{sm}(2g+2)/
  (\GL_2/\mu_{2g+2})]$$ which is a $\mu_2$-gerbe.
\end{remark}

Since $2g+2$ is always even, \cite[Proposition 4.4]{ArVi:03}
implies that the homomorphism
$\GL_2/\mu_{2g+2} \to \G_m \times \PGL_2$, given by $[A] \mapsto (\det (A)^{g+1}, [A])$ is an isomorphism. Under this identification of groups, the action of
$\G_m \times \PGL_2$ on the affine space of binary forms $\Aff(2g+2)$ is given by
$(\lambda, [A])\cdot f(x) = \lambda^{-1} \det(A)^{g+1} f(A^{-1}x)$.

\subsection{Computing the $\G_m \times \PGL_2$-equivariant Chow ring of
  $\Aff_{sm}(2g+2)$ using $\G_m \times \GL_3$-counterparts}

In order to compute the Chow ring of the quotient stack $[\Aff_{sm}(2g
  + 2)/(\G_m \times \PGL_2)]$, we follow the method used by Di Lorenzo in
\cite[Section 2.3]{diL:18} by finding a $\G_m \times \GL_3$-counterpart of the
$\G_m \times \PGL_2$-scheme $\Aff_{sm}(2g + 2)$ to resolve the issue
that the group $\PGL_2$ is non-special. Let us briefly recall the
concepts from \cite{diL:18} here.

Given a $\PGL_2$-scheme $X$ of finite type over $\Spec k$, a
$\GL_3$-counterpart of $X$ is another scheme $Y$ equipped with a
$\GL_3$-action such that $[Y/\GL_3] \cong [X/\PGL_2]$. In
\cite[Theorem 1.4]{diL:18}, Di Lorenzo proves that a $\GL_3$-counterpart always
exists for a given $\PGL_2$-scheme. Indeed, with the adjoint
representation of $\PGL_2$, we have a morphism of algebraic groups $\PGL_2 \to \GL_3$. It follows
that
the quotient $\GL_3/\PGL_2$ is a $\GL_3$-counterpart of $\Spec
k$. Pulling back $[\Aff_{sm}(2g + 2)/\PGL_2]$ along the $\GL_3$-torsor
$\GL_3/\PGL_2 \to [\Spec k/\PGL_2] \cong \M_0$ will in turn give a
$\GL_3$-counterpart of the $\PGL_2$ action on $\Aff_{sm}(2g + 2)$. The $\G_m \times
\GL_3$-counterpart can be defined similarly.

Let $\Delta'$ denote the complement of $\Aff_{sm}(2g + 2)$ in $\Aff(2g + 2)$, \ie the discriminant locus of $\Aff(2g + 2)$. Because the weight of the action
of $\G_m$ in the quotient stack $[\Aff_{sm}(2g + 2)/(\G_m \times \PGL_2)]$ presenting $[\M_{0,2g + 2}/S_{2g + 2}]$ is different from the weight
of the $\G_m$ action that Di Lorenzo used in
his computation of $A^*(\H_g)$ for $g$ odd \cite{diL:18}, our next proposition
describes the $\G_m \times \GL_3$-counterpart for a $\G_m$ action
with an arbitrary weight. Precisely, if
$\lambda \cdot f = \chi(\lambda) f$ for some character $\chi$ of $\G_m$,
we consider the $\G_m \times \GL_3$-counterparts for $\Aff(2g + 2)$ and $\Delta'$, where $\G_m \times \PGL_2$
acts on $\Aff(2g+2)$ by
$$ (\lambda, A)\cdot f = \chi(\lambda) (\det A)^{g+1} f(A^{-1}x).$$

\begin{prop}\cite[Proposition 2.3 \& 2.6]{diL:18} \label{prop.dilorenzo}
Let $V_{g + 1}$ be the scheme parametrizing pairs $(q,f)$ where $q$ is a global section of $\mathcal{O}_{\Pro^2_S}(2)$ with zero locus $Q$, and $f$ is a global section of $\mathcal{O}_Q(g + 1)$. Let $D' \subset V_{g + 1}$ be the singular locus inside $V_{g + 1}$. Then
\begin{itemize}
\item $V_{g + 1}$ is a $\G_m \times \GL_3$-counterpart of $\Aff(2g + 2)$
  with the action of $\G_m \times \GL_3$ given by
\[(\lambda,A) \cdot (q,f) \coloneq (\det(A)q(A^{-1}x),\chi(\lambda) f(A^{-1}x)).\]  
    \item $D'$ is a $\G_m \times \GL_3$-counterpart of $\Delta'$.

\end{itemize}
\end{prop}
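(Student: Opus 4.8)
The plan is to produce an explicit equivalence of quotient stacks $[V_{g+1}/(\G_m \times \GL_3)] \simeq [\Aff(2g+2)/(\G_m \times \PGL_2)]$ and then to match the two bad loci $D'$ and $\Delta'$, following the construction of \cite{diL:18} and checking that the only new feature, the arbitrary $\G_m$-character $\chi$, leaves the argument intact. First I would fix the faithful three-dimensional representation of $\PGL_2$ coming from its action on $\Pro^2 = \Pro(\Sym^2 V)$, $V = k^2$, which preserves the Veronese conic $Q_0 = \{[\ell^2] : \ell \in V\}$; this realizes $\PGL_2$ as the stabilizer of $Q_0$, hence (after the lift used in \cite{diL:18}, identifying $\PGL_2$ with $SO_3 \subset \GL_3$) as a closed subgroup of $\GL_3 = \GL(\Sym^2 V)$. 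By the contracted-product description of a $\GL_3$-counterpart (\cite[Theorem 1.4]{diL:18}) it then suffices to identify $\GL_3 \times^{\PGL_2} \Aff(2g+2)$, with its residual $\G_m \times \GL_3$-action, with $V_{g+1}$.

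The geometric heart is the dictionary between binary forms and sections on the conic. Since the Veronese identifies $\Pro^1$ with $Q_0$ and pulls $\mathcal{O}_{\Pro^2}(1)|_{Q_0}$ back to $\mathcal{O}_{\Pro^1}(2)$, one gets $\mathcal{O}_{Q_0}(g+1) \cong \mathcal{O}_{\Pro^1}(2g+2)$, and the dimension count $h^0(Q_0,\mathcal{O}_{Q_0}(g+1)) = \binom{g+3}{2} - \binom{g+1}{2} = 2g+3 = \dim \Aff(2g+2)$ confirms that sections of $\mathcal{O}_{Q_0}(g+1)$ are exactly binary forms of degree $2g+2$. I would check that this identification is $\PGL_2$-equivariant for the action $A \cdot f = f(A^{-1}x)$, so that the fiber $\Aff(2g+2)$ of the contracted product becomes $H^0(Q,\mathcal{O}_Q(g+1))$ as the conic $Q = V(q)$ varies over the base $\GL_3/\PGL_2$. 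Spreading this out over an arbitrary base $S$ then gives the two mutually inverse functors: a family in $[V_{g+1}/(\G_m \times \GL_3)]$ is a rank-three bundle $\mathcal{E}$, a conic $Q = V(q) \subset \Pro(\mathcal{E})$, and $f \in H^0(Q,\mathcal{O}_Q(g+1))$; a family in $[\Aff(2g+2)/(\G_m \times \PGL_2)]$ is a conic bundle (twisted $\Pro^1$) together with a relative binary form of degree $2g+2$, and the Veronese/anticanonical embedding passes between them.

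It remains to match the $\G_m$-weights and the bad loci, which is the only place genuine checking enters. The $\G_m$ factor acts solely by scaling $f$ through $\chi(\lambda)$, exactly as in the stated action $(\lambda,A)\cdot(q,f) = (\det(A)q(A^{-1}x), \chi(\lambda)f(A^{-1}x))$, and I would verify that the $\det(A)$-twist on $q$ is the one forced by using $\GL_3$ rather than $\PGL_3$ together with the Arsie--Vistoli normalization $A \cdot f = \det(A)^{g+1}f(A^{-1}x)$ of \cite{ArVi:03}. Finally, under the dictionary a binary form has a repeated root precisely when the divisor $V(f) \subset Q$ fails to be reduced, equivalently fails to be \'etale over the base, which is the defining condition of the singular locus $D' \subset V_{g+1}$; hence $\Delta'$ corresponds to $D'$ and the second bullet follows. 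The main obstacle I anticipate is the bookkeeping around the center of $\GL_3$ and the $\mu_2$ relating $O_3$ and $SO_3$: one must confirm that the contracted product is \emph{exactly} $V_{g+1}$ with the stated action and not a $\mu_2$-twist of it, and that the weight normalizations are mutually consistent. Since everything except the presence of $\chi$ is precisely \cite[Propositions 2.3 and 2.6]{diL:18}, and $\chi$ enters only as an overall rescaling of $f$ that commutes with every step, the cleanest route is to cite Di Lorenzo for the conic/section dictionary and the identification of $D'$, and to redo only the short weight computation tracking $\chi$.
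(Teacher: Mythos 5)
Your proposal is correct and follows essentially the same route as the paper, which does not reprove the statement but cites \cite[Propositions 2.3 \& 2.6]{diL:18} directly, noting only that the arbitrary character $\chi$ changes nothing except the $\G_m$-weight bookkeeping. Your reconstruction of Di Lorenzo's conic/Veronese dictionary (including the correct dimension count $2g+3$) and your concluding suggestion to cite \cite{diL:18} while redoing the weight computation is precisely the stance the paper takes.
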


Applying Proposition \ref{prop.dilorenzo} we see that to compute
$A^*([\M_{0,2g+2}/S_{2g+2}])$ we must compute
$A^*_{\G_m \times \GL_3}(V_{g+1} \setminus D')$ where $\G_m \times \GL_3$
acts by $$(\lambda, A) \cdot (q,f) = (\det(A)q(A^{-1}x), \lambda^{-1}f(A^{-1}x)).$$

This computation is almost identical to the computation made by
Di Lorenzo in \cite[Section 4]{diL:18}.  The only difference is when
passing to the projectivization of $V_{g + 1}$ in order to trivialize
the global $\G_m$-action. Let $\sigma_0$ denote the
$\GL_3$-counterpart of $0 \in \Aff(2g + 2)$, which turns out to be the
zero section. The map $V_{g + 1} \setminus \sigma_0 \to \Pro(V_{g +
  1})$ is a $\G_m$-torsor over $\Pro(V_{g + 1})$ with the associated
line bundle $\mathcal{V}^{-1} \otimes \mathcal{O}(-1)$. Here
$\mathcal{V}$ is the standard character of $\G_m$. By contrast in \cite{diL:18}
the associated line bundle is ${\mathcal V}^{-2} \otimes {\mathcal O}(-1)$.

Following \cite{diL:18}, we denote by
$D$ the projectivization of $D'$.
The $\G_m$-torsor $V_{g+1} \setminus D' \to \Pro(V_{g + 1})\setminus D$
induces a surjective
pullback on the equivariant Chow groups $A^*_{\G_m \times \GL_3}(\Pro(V_{g
  + 1})\setminus D) \to A^*_{\G_m \times \GL_3}(V_{g + 1} \setminus
D')$ with kernel generated by $c_1(\mathcal{V}^{-1} \otimes
\mathcal{O}(-1)) = -\tau - h_{g + 1}$, where $\tau$ is the first Chern
class of the standard representation of $\G_m$, and $h_{g + 1}$ is the
first Chern class of $\mathcal{O}(1)$.

The same method used in the calculation of
$A^*(\H_g)$ for odd genus $g$ in \cite[Section 4]{diL:18} yields the following
result.
\begin{prop}
The integral Chow ring of $[(V_{g + 1}\setminus D')/(\G_m \times \GL_3)]$ is a quotient ring
\[\Z[\tau,c_2,c_3]/(2\tau^2 - 2g(g + 1)c_2,2(2g + 1)\tau,2c_3,p_{g + 1}(-\tau))\]
where $p_{g + 1}(-\tau) = p_{g + 1}(h_{g + 1})$ is the monic polynomial of degree $2g + 3$, which vanishes in $A^*_{\GL_3}(\Pro(V_{g + 1}))$, obtained by applying
the $\GL_3$-equivariant projective bundle theorem to $\Pro(V_{g + 1})$. 
\end{prop}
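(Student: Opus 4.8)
The plan is to follow Di Lorenzo's computation of $A^*(\H_g)$ for odd genus, tracking the single change caused by the different $\G_m$-weight. Here's how I'd organize it.

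The plan is to follow Di~Lorenzo's computation of $A^*(\H_g)$ for odd $g$ in \cite[Section~4]{diL:18} step by step, the only genuine difference being the weight of the global $\G_m$-action. The computation proceeds in three stages: first apply the $\GL_3$-equivariant projective bundle theorem to $\Pro(V_{g+1})$; then excise the discriminant $D$ by a localization sequence; and finally descend along the $\G_m$-torsor $V_{g+1}\setminus D' \to \Pro(V_{g+1})\setminus D$ by killing $c_1(\mathcal V^{-1}\otimes\mathcal O(-1)) = -\tau - h_{g+1}$.

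First I would compute $A^*_{\G_m\times\GL_3}(\Pro(V_{g+1}))$. Over the six-dimensional affine space of ternary quadrics $q$, the bundle $V_{g+1}$ is (away from the degenerate locus, which will be excised) a vector bundle of rank $2g+3$, and the projective bundle theorem presents its Chow ring as $\Z[\tau,c_1,c_2,c_3][h_{g+1}]/(p_{g+1}(h_{g+1}))$, where $h_{g+1} = c_1(\mathcal O(1))$, the variable $\tau = c_1(\mathcal V)$ is adjoined freely by the extra $\G_m$-factor, and $p_{g+1}$ is the monic degree-$(2g+3)$ relation $\sum_i c_i(V_{g+1})\,h_{g+1}^{2g+3-i}$. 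The classes $c_i(V_{g+1})$ are read off from the exact sequence $0 \to \mathcal O(g-1)\xrightarrow{\cdot q}\mathcal O(g+1)\to\mathcal O_Q(g+1)\to 0$ on the universal conic, exactly as in \cite{diL:18}; for this proposition only the existence and degree of $p_{g+1}$ are needed, not its explicit form.

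Next I would excise $D$, using the localization sequence $A^*_{\G_m\times\GL_3}(D) \to A^*_{\G_m\times\GL_3}(\Pro(V_{g+1})) \to A^*_{\G_m\times\GL_3}(\Pro(V_{g+1})\setminus D)\to 0$. Restricting to the locus of smooth conics realizes the coefficient ring as that of $B(\G_m\times\PGL_2)$, namely $\Z[\tau,c_2,c_3]/(2c_3)$; in particular $c_1$ is eliminated and the relation $2c_3 = 0$ appears, via the standard presentation $A^*(B\PGL_2) = \Z[c_2,c_3]/(2c_3)$. The image of the pushforward from the discriminant then supplies the remaining relations, which together with the final substitution of the next step become $2(2g+1)\tau$ and $2\tau^2 - 2g(g+1)c_2$. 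I expect this to be the main obstacle: one must identify the fundamental class of the discriminant and the relevant normal-bundle and self-intersection contributions, which is precisely the content of Proposition~\ref{prop.dilorenzo} and the computation following it in \cite{diL:18}.

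Finally I would descend along the $\G_m$-torsor by setting $h_{g+1} = -\tau$, which kills $c_1(\mathcal V^{-1}\otimes\mathcal O(-1)) = -\tau - h_{g+1}$. This converts the projective bundle relation $p_{g+1}(h_{g+1})$ into $p_{g+1}(-\tau)$ and, together with the relations produced by the excision step, yields the asserted presentation $\Z[\tau,c_2,c_3]/(2\tau^2 - 2g(g+1)c_2,\,2(2g+1)\tau,\,2c_3,\,p_{g+1}(-\tau))$. The substitution $h_{g+1} = -\tau$ --- in contrast to Di~Lorenzo's $h_{g+1} = -2\tau$, forced by his associated line bundle $\mathcal V^{-2}\otimes\mathcal O(-1)$ --- is exactly where our computation diverges, and it accounts for the smaller $\tau$-coefficients here relative to the $\H_g$ relations $4(2g+1)\tau$ and $8\tau^2 - 2g(g+1)c_2$ of Theorem~\ref{thm.Hg}.
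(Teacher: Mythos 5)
Your proposal is correct and takes essentially the same approach as the paper, whose entire proof is the remark that ``the same method used in the calculation of $A^*(\H_g)$ for odd genus $g$ in \cite[Section 4]{diL:18} yields the result'': projective bundle theorem on $\Pro(V_{g+1})$, excision of $D$, then descent along the $\G_m$-torsor, with the sole modification that the associated line bundle is $\mathcal{V}^{-1}\otimes\mathcal{O}(-1)$ rather than $\mathcal{V}^{-2}\otimes\mathcal{O}(-1)$, i.e.\ the substitution $h_{g+1}=-\tau$ in place of $h_{g+1}=-2\tau$. You correctly isolate this as the only point of divergence and correctly trace its effect on the coefficients of the relations $2(2g+1)\tau$ and $2\tau^2-2g(g+1)c_2$.
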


Finally, checking that the monic polynomial $p_{g + 1}(-\tau)$ is not in the ideal $(2\tau^2 - 2g(g + 1)c_2,2(2g + 1)\tau,2c_3)$ by using \cite[Proposition 6.5]{FuVi:11} completes the proof of Theorem \ref{thm.m2g+2}.

\section{Description of $\H_{g,w}$ and $[\M_{0,2g+2}/S_{2g+1}]$ as quotients
    by Borel subgroups of rank 2}
  In this section we give a description of the stacks $\H_{g,w}$ and $[\M_{0,2g+2}/S_{2g+1}]$ as quotients by Borel subgroups of $\GL_2/\mu_{g+1}$
 and $\GL_2/\mu_{2g+2}$ respectively.
  
Recall that Pernice \cite[Proposition 1.3]{Per:21} proved that the stack $\H_{g,1}$
is equivalent to the stack $\H'_{g,1}$ parametrizing the following data
$$(P \to S,\L, i \colon \L^2 \hookrightarrow {\mathcal O}_P, \sigma_P, j)$$
where $P \to S$ is a twisted $\Pro^1$, $\L$ is a line bundle on $P$ which restricts to a line bundle of degree $-(g+1)$ on the fibers of $P \to S$,
$\sigma_P \colon S \to P$ is a section and $j \colon \sigma_P^*\L \to {\mathcal O}_S$ satisfies $j^{\otimes 2} = \sigma_P^*(i)$.

Given this data Pernice follows \cite{ArVi:03} to obtain a family of pointed
hyperelliptic curves
by taking the double cover
$C = \Spec_{{\mathcal O}_P} ({\mathcal O}_P \oplus \L) \to P$ where the ${\mathcal O}_P$-algebra structure on ${\mathcal O}_P \oplus \L$ is induced by $i$.
If $f : C \to S$ is the corresponding family
of hyperelliptic curves, then Pernice shows that the pair $(\sigma_P, j)$ determines a section
$\sigma \colon S \to C$ such that $\sigma_P = f \circ \sigma$.

Now the section $\sigma$ is a Weierstrass section if and only if $\sigma$ is in the ramification divisor of the double cover
$C \to P$. This is equivalent to the condition that
$j=0$ as a map $\sigma_P^*\L \to {\mathcal O}_S$.
Putting this together we obtain 
the following description of $\H_{g,w}$.
\begin{prop}
  The stack $\H_{g,w}$ is equivalent to the stack $\H'_{g,w}$
  parametrizing the data
  $$(P \to S,\L, i \colon \L^2 \hookrightarrow {\mathcal O}_P, \sigma_P, 0).$$
\end{prop}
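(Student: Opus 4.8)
The plan is to deduce the statement directly from Pernice's equivalence $\H_{g,1} \cong \H'_{g,1}$ by restricting to the Weierstrass locus. Recall that $\H_{g,w}$ is precisely the substack of $\H_{g,1}$ whose $S$-points are those pointed families $(C \to S, \sigma)$ for which the marked section $\sigma$ factors through the ramification divisor of the hyperelliptic double cover; this is a closed (codimension one) condition, matching the fact that $\H_{g,w}$ is a Cartier divisor in $\Cc_g = \H_{g,1}$. It therefore suffices to identify, under Pernice's equivalence, which data $(P \to S, \L, i, \sigma_P, j)$ give rise to such a section, and to check that the resulting condition is functorial.

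First I would make explicit the construction of the section $\sigma$ from the pair $(\sigma_P, j)$. The double cover $C = \Spec_{{\mathcal O}_P}({\mathcal O}_P \oplus \L) \to P$ has ramification divisor $R \subset C$ cut out by the vanishing of the $\L$-coordinate: locally, trivializing $\L$ by a generator $e$ so that the ${\mathcal O}_P$-algebra structure reads ${\mathcal O}_P[e]/(e^2 - a)$ with $a = i(e^{\otimes 2})$, one has $R = \{e = 0\}$ (using $\Char k \neq 2$), the preimage of the branch locus $\{a = 0\}$. The section $\sigma$ lying over $\sigma_P$ is specified by the value of the $\L$-coordinate along $\sigma_P$, which is exactly the datum $j \colon \sigma_P^*\L \to {\mathcal O}_S$. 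Hence $\sigma$ factors through $R$ if and only if $j = 0$. This is consistent with the constraint $j^{\otimes 2} = \sigma_P^*(i)$: when $j = 0$ the pulled-back branch equation $\sigma_P^*(i)$ vanishes, i.e. $\sigma_P$ meets the branch locus, which is exactly the condition for $\sigma$ to be a ramification point. Imposing $j = 0$ cuts out the substack $\H'_{g,w} \subset \H'_{g,1}$, and under the equivalence it corresponds to $\H_{g,w} \subset \H_{g,1}$, yielding $\H_{g,w} \cong \H'_{g,w}$.

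The main obstacle will be checking that this identification is genuinely an equivalence of stacks rather than a fiberwise statement: one must verify that ``$\sigma$ is a Weierstrass section'' and ``$j = 0$'' define the same subfunctor over an arbitrary base $S$. This reduces to confirming that the local descriptions of $R$ and of the $\L$-coordinate of $\sigma$ above are independent of the chosen trivialization of $\L$ and glue correctly, together with the observation that the vanishing of the morphism $j$ is a well-defined closed condition compatible with base change. All of this is routine once the construction of $\sigma$ from $(\sigma_P, j)$ in \cite{Per:21} is recalled in coordinates, so the proof amounts to assembling these facts rather than to any new computation.
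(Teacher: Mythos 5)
Your proposal is correct and follows essentially the same route as the paper: the paper likewise invokes Pernice's equivalence $\H_{g,1}\cong \H'_{g,1}$, notes that the pair $(\sigma_P, j)$ determines the section $\sigma$ with $\sigma_P = f\circ\sigma$, and observes that $\sigma$ is a Weierstrass section exactly when it lies in the ramification divisor of $C \to P$, which is equivalent to $j = 0$. Your local computation identifying the $\L$-coordinate of $\sigma$ with $j$ simply makes explicit the step the paper leaves to \cite{Per:21}.
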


Let $\HH(2g+2,1) = \{(f,s)| f(0,1) = s^2\} \subset \Aff(2g+2) \times \Aff^1$
and let $\HH_{sm}(2g+2,1)$ be the open set where the form $f$ has distinct roots. Likewise let $\HH^w(2g + 2,1)$ be the divisor defined by the equation $s=0$,
and let $\HH^w_{sm}(2g+2,1)$ be its intersection with $\HH_{sm}(2g+2,1)$.
By \cite[Proposition 1.5]{Per:21}, the stack $\H'_{g,1}$
is equivalent to the quotient stack $[\HH_{sm}(2g+2,1)/(B_2/\mu_{g+1})]$
where $B_2$ is the Borel subgroup of lower triangular matrices in $\GL_2$,
and the action is given as follows. 
If $A= \begin{bmatrix} a & 0\\ b & c\end{bmatrix} \in B_2/\mu_{g+1}$ then 
$$A\cdot(f(x),s) =\left(f(A^{-1}x), c^{-(g+1)}s\right).$$
Note that our notation $\HH_{sm}(2g+2,1)$ differs from that of \cite{Per:21}.

The stack $\H_{g,w}$ is the quotient by $B_2/\mu_{g+1}$ of the divisor
  $\HH^w_{sm}(2g+2,1)$ defined by the equation
$s =0$ in $\HH_{sm}(2g+2,1)$.

By \cite[Proposition 4.4]{ArVi:03}, the group $G = B_2/\mu_{g+1}$ is isomorphic to $B_2$ if $g$ is even, and if $g$ is odd to
$\G_m \times (B_2/\G_m)$ where $B_2/\G_m$ is the group of lower triangular matrices in $\PGL_2$.
Under these identifications, the action of $B_2/\mu_{g+1}$ on $\HH_{sm}(2g+2,1)$
is given by 
\begin{itemize}
  \item 
If $g$ is even, $G = B_2$ acts by 
\[A \cdot (f(x),s) \coloneq 
\left((\det A)^{g}f(A^{-1}x), a^{{g\over{2}}}c^{-{{g+2}\over{2}}}s\right),\quad A = \begin{pmatrix} a & 0 \\ b & c 
\end{pmatrix}.\]
\item
  If $g$ is odd, $G = \G_m \times (B_2/\G_m)$ acts by
  \[ (\alpha, A) \cdot \left(f(x),s\right) \coloneq \left(\alpha^{-2} \det(A)^{g+1} f(A^{-1}x),\alpha^{-1} a^{{g+1\over{2}}} c^{-{g+1\over{2}}}s\right)\]
  where $A = \begin{bmatrix} a & 0\\ b & c\end{bmatrix}$ is a lower triangular matrix in $\PGL_2$.
    Note that the expression $\det(A)^{g+1} f(A^{-1}x)$ is invariant under homotheties so this gives a well-defined action of $\PGL_2$.
\end{itemize}

Now consider the quotient stack $[\M_{0,2g + 2}/S_{2g + 2 - n}]$. 

For any $n \geq 1$, the quotient stack $[\M_{0,2g+2}/S_{2g+2-n}]$
parametrizes the data $(P \to S, D_{2g+2}, \sigma_{1}, \ldots , \sigma_{n})$
where $P \to S$ is a twisted $\Pro^1$, $D_{2g+2} \subset P$ is an effective Cartier divisor which is finite and \'etale over $S$ of degree $2g+2$, and $\sigma_{1}, \ldots, \sigma_n$ are distinct sections of $D_{2g+2} \to S$.

The methods used above also realize the stack $[\M_{0,2g+2}/S_{2g+1}]$ as a quotient
  of $\HH_{sm}^w(2g+2,1)$ by $B_2/\mu_{2g+2}$.
  Recall that we identify $[\M_{0,2g+2}/S_{2g+1}]$ as the stack 
  parametrizing triples $(P \to S, D_{2g+2}, \sigma )$ where $P \to S$ is a twisted $\Pro^1$, $D_{2g+2} \subset P$ is finite and \'etale of degree $2g+2$ over $S$,
  and $\sigma \colon S \to D_{2g+2}$ is a section.
  Taking the marked point to be $(0:1) \in \Pro^1$ when rigidifying, we have
  an equivalence $[\M_{0,2g+2}/S_{2g+1}] = [\HH_{sm}^w(2g+2,1)/(B_2/\mu_{2g+2})]$.
  Since $2g+2$ is always even, the quotient $B_2/\mu_{2g+2}$ is always isomorphic to $\G_m \times (B_2/\G_m)$. In terms of this isomorphism, the action
  on $\HH_{sm}^w(2g+2,1)$ is given by 
  $$ (\alpha, A) \cdot f(x)  = \alpha^{-1} (\det A)^{g+1} f(A^{-1}x).$$

  \subsection{Description of the Chow rings of $\H_{g,w}$,
    $[\M_{0,2g+2}/S_{2g+1}]$,
    in terms of torus equivariant Chow groups}
  \noindent Since $B_2$ is an unipotent extension of the group of diagonal matrices
$D_2$, $B_2/\mu_{g+1}$ and $B_2/\mu_{2g+2}$
are unipotent extensions of the two-dimensional quotient tori
$D_2/\mu_{g+1}$ and $D_2/\mu_{2g+2}$ respectively. This implies
that for any algebraic space $X$, the map of stacks
$[X/(D_2/\mu_{g+1})] \to [X/(B_2/\mu_{g+1})]$
(\resp $[X/(D_2/\mu_{2g+2})] \to [X/(B_2/\mu_{2g+2})]$)
is an affine bundle. It follows that the flat pullback on equivariant
Chow groups induces an isomorphism
$A^*_{B_2/\mu_{g+1}}(X) \simeq A^*_{D_2/\mu_{g+1}}(X)$
(resp. $A^*_{B_2/\mu_{2g+2}}(X) \simeq A^*_{D_2/\mu_{2g+2}}(X)$).
Hence $A^*(\H_{g,1})=
A^*_T(\HH_{sm}(2g+2,1))$,
$A^*(\H_{g,w}) = A^*_T(\HH^w_{sm}(2g+2,1))$ and $A^*([\M_{0,2g+2}/S_{2g+1}]) =
A^*_T(\HH^w_{sm}(2g+2,1))$ for the actions of  the two-dimensional tori $T
=D_2/\mu_{g+1}$ and $T= D_2/\mu_{2g+2}$ respectively. We now describe these actions.

\begin{itemize}

\item If $g$ is even, then using coordinates $(t_0,t_1)$ on $T$, the action on
  $\HH(2g+2,1)$ is given by
    $$(t_0,t_1)\cdot \left( f(x_0,x_1),s\right) = \left((t_0t_1)^gf(x_0/t_0, x_1/t_1),t_0^{{g\over{2}}}t_1^{-{g+2\over{2}}}s\right).$$
    
  \item If $g$ is odd, then using coordinates $(\alpha, \rho)$ on $T$,
the action on $\HH(2g+2,1)$ is given by
$$(\alpha, \rho) \cdot \left(f(x_0,x_1),s \right) =
\left(\alpha^{-2} \rho^{g+1} f(x_0/\rho, x_1), \alpha^{-1} \rho^{{g+1\over{2}}}s\right).$$

  \item For the stack $[\M_{0,2g+2}/S_{2g+1}]$, using coordinates
    $(\alpha, \rho)$ on $T$, the action on $\HH^{w}(2g+2,1)$ is given by 
    $$(\alpha, \rho) \cdot f(x_0,x_1) = \alpha^{-1} \rho^{g+1} f(x_0/\rho, x_1).$$
\end{itemize}

Consider the inclusion $\Aff(2g+1) \hookrightarrow \Aff(2g+2)$, $h \mapsto x_0h$.
The variety $\HH^w_{sm}(2g+2,1)$ can be identified with the open set
$U_w \coloneq (\Aff(2g+1) \setminus \Delta') \setminus L$ where $\Delta'$ is the discriminant locus of the space $\Aff(2g + 1)$ as defined in the previous section, and $L$ is the linear subspace of forms satisfying $h(0,1) =0$.
By choosing a suitable action of the rank-two torus $T$ on
$\Aff(2g+1)$, we can ensure that the map
$\Aff(2g+1) \to \Aff(2g+2)$ is $T$-equivariant for the various actions
of $T$ on $\Aff(2g+2)$.
In particular, we may calculate the Chow rings of the stacks $\H_{g,w}$
and $[\M_{0,2g+2}/S_{2g+1}]$ as the $T$-equivariant Chow rings of
$U_w$ for different actions of the torus $T$. This is summarized in the following proposition.

\begin{prop}\label{action-1pt}  \hspace*{\fill}
  \begin{itemize}
  \item If $g$ is even, then $A^*(\H_{g,w}) = A^*_T(U_w)$ where
    the action of $T$ is given by  $$(t_0,t_1) \cdot h(x_0,x_1) = t_0^{g-1} t_1^g h(x_0/t_0,x_1/t_1).$$
  \item If $g$ is odd, then $A^*(\H_{g,w}) = A^*_T(U_w)$ where
    the action of $T$ is given by $$(\alpha, \rho) \cdot h(x_0,x_1) =
    \alpha^{-2} \rho^g h(x_0/\rho, x_1).$$
  \item For any genus $g$, $A^*([\M_{0,2g+2}/S_{2g+1}]) = A^*_T(U_w)$
    where $T$ acts by $$(\alpha, \rho) \cdot h(x_0,x_1) = \alpha^{-1} \rho^{g} h(x_0/\rho, x_1).$$
  \end{itemize}
\end{prop}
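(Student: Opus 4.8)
The plan is to unwind each of the three identifications in Proposition~\ref{action-1pt} by reducing, step by step, the torus actions already exhibited on $\HH^w_{sm}(2g+2,1)$ to the claimed actions on $U_w$. The starting point is the three torus actions on $\HH(2g+2,1)$ displayed immediately above the proposition, restricted to the divisor $s=0$, together with the geometric identification $\HH^w_{sm}(2g+2,1) \cong U_w$ coming from the inclusion $\Aff(2g+1) \hookrightarrow \Aff(2g+2)$, $h \mapsto x_0 h$.

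First I would make the isomorphism $\HH^w_{sm}(2g+2,1) \cong U_w$ fully explicit. Setting $s=0$ in the defining equation $f(0,1)=s^2$ forces $f(0,1)=0$, i.e. $x_0 \mid f$, so that $f = x_0 h$ with $h \in \Aff(2g+1)$; the smoothness condition (distinct roots of $f$) translates into $h$ having distinct roots \emph{and} $h(0,1) \neq 0$, which is exactly the condition $(\Aff(2g+1)\setminus \Delta')\setminus L$ defining $U_w$. Next I would track how each torus action on $f$ transforms $h$ under the substitution $f(x_0,x_1) = x_0 h(x_0,x_1)$. The key computation is the homogeneity bookkeeping: if $(t_0,t_1)\cdot f = (t_0 t_1)^g f(x_0/t_0, x_1/t_1)$ (even case), then writing $f = x_0 h$ and using $f(x_0/t_0, x_1/t_1) = (x_0/t_0)\, h(x_0/t_0, x_1/t_1)$ produces an extra factor of $t_0^{-1}$, which converts the weight $(t_0t_1)^g$ into $t_0^{g-1}t_1^g$, matching the first bullet. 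The odd case and the $[\M_{0,2g+2}/S_{2g+1}]$ case are handled by the same substitution, where $f(x_0/\rho, x_1) = (x_0/\rho)\,h(x_0/\rho,x_1)$ contributes a $\rho^{-1}$ that lowers the exponent of $\rho$ from $g+1$ to $g$, yielding $\alpha^{-2}\rho^g$ and $\alpha^{-1}\rho^g$ respectively.

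Having matched the actions, I would invoke the reductions established earlier in this section: namely, that $A^*(\H_{g,w}) = A^*_T(\HH^w_{sm}(2g+2,1))$ and $A^*([\M_{0,2g+2}/S_{2g+1}]) = A^*_T(\HH^w_{sm}(2g+2,1))$ for the tori $T = D_2/\mu_{g+1}$ and $T = D_2/\mu_{2g+2}$, which follow from the affine-bundle argument identifying $B_2$-equivariant Chow groups with $D_2$-equivariant ones. Combining these identities with the $T$-equivariant isomorphism $\HH^w_{sm}(2g+2,1)\cong U_w$ just verified gives the three stated equalities, with the coordinates $(t_0,t_1)$ or $(\alpha,\rho)$ on $T$ chosen as in the displayed actions. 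The one subtlety to get right is the passage to the quotient torus: I must check that the substituted actions descend correctly through $D_2/\mu_{g+1}$ (resp.\ $D_2/\mu_{2g+2}$), i.e.\ that the character weights appearing on $h$ are genuinely invariant under the relevant $\mu$-kernel so that the action on $U_w$ is well defined.

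The main obstacle, and the only place demanding care, is the exponent bookkeeping through the three distinct reparametrizations—in particular ensuring the shift by one unit of weight from the factor $x_0$ is tracked consistently in both the even and odd genus conventions, where the torus $T$ is coordinatized differently. Once the substitution $f = x_0 h$ is handled cleanly, the rest is a formal consequence of the equivariant-Chow identities already in place, so I expect no essential difficulty beyond this weight calculation.
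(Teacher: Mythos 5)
Your proposal is correct and follows essentially the same route as the paper, which states the proposition as a summary of the immediately preceding discussion: the affine-bundle reduction $A^*_{B_2/\mu}(X) \simeq A^*_{D_2/\mu}(X)$ identifying $A^*(\H_{g,w})$ and $A^*([\M_{0,2g+2}/S_{2g+1}])$ with $T$-equivariant Chow rings of $\HH^w_{sm}(2g+2,1)$, followed by the substitution $f = x_0 h$ identifying $\HH^w_{sm}(2g+2,1)$ with $U_w = (\Aff(2g+1)\setminus\Delta')\setminus L$ and transporting the three displayed torus actions. Your weight bookkeeping (the factor $x_0/t_0$, resp.\ $x_0/\rho$, contributing $t_0^{-1}$, resp.\ $\rho^{-1}$, so that $(t_0t_1)^g \mapsto t_0^{g-1}t_1^g$ and $\rho^{g+1} \mapsto \rho^{g}$) exactly reproduces the paper's stated actions, and your check that $s=0$ forces $x_0 \mid f$ with $h$ having distinct roots and $h(0,1)\neq 0$ is the same identification the paper uses.
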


\section{Description of $\H_{g,2}^w, \H_{g,3}^w$, $[\M_{0,2g+2}/S_{2g}]$,
  $[\M_{0,2g+2}/S_{2g-1}]$ as quotients by tori}

\subsection{The 2-Weierstrass pointed case} \label{sec.2weierstrass}
Let $\H_{g,2}^o$ be the open substack of $\H_{g,2}$ parametrizing
two-pointed hyperelliptic curves $(C \to S, \sigma_1, \sigma_2)$ such
that for every $s \in S$, $\sigma_1(s) + \sigma_2(s)$ does not equal to
the $g^1_2$.
The construction of \cite{Per:21} can be extended to
give a presentation for $\H_{g,2}^o$ as a quotient stack. The
stack $\H_{g,2}^o$ 
is equivalent to the stack $({\H^{o}_{g,2}})'$ parametrizing the following data
$$(P \to S,\L, i \colon \L^2 \hookrightarrow {\mathcal O}_P, \sigma_{P,1} , \sigma_{P,2}, j_1, j_2)$$
where $P \to S$ is a twisted $\Pro^1$, $\L$ is a line bundle on $P$ which restricts to a line bundle of degree $-(g+1)$ on the fibers of $P \to S$,
$\sigma_{P,1}, \sigma_{P,2} \colon S \to P$ are distinct sections and $j_\ell \colon \sigma_{P,\ell}^*\L \to {\mathcal O}_S$ satisfies $j_{\ell}^{\otimes 2} = \sigma_{P,\ell}^*(i)$ for $\ell = 1,2$.

The stack $\H_{g,2}^w$ is equivalent to the closed substack $(\H^{w}_{g,2})'$ in $(\H^{o}_{g,2})'$
parametrizing tuples where both $j_1, j_2$ are the zero map.
The same argument of \cite[Proposition 1.5]{Per:21} shows that 
the stack $(\H^{o}_{g,2})'$ can be realized as the quotient stack
$[\HH_{sm}(2g+2,2)/(D_2/\mu_{g+1})]$. Here
\[ \HH_{sm}(2g+2,2) = \{ (f,s_0,s_\infty)| f(0,1) = s_0^2, f(1,0) = s_\infty^2\}
\subset \Aff_{sm}(2g+2) \times \Aff^2\]
and $D_2/\mu_{g+1}$ is the torus in $\GL_2/\mu_{g+1}$ that fixes $(1:0)$ and $(0:1)$
in $\Pro^1$.
The action of $D_2/\mu_{g+1}$ is given by
\[A \cdot \left( f(x), s_0, s_\infty\right) = \left(f(A^{-1}x), c^{-(g+1)}s_0, a^{-(g+1)}s_\infty\right),\quad A = \begin{bmatrix}
a & 0 \\ 0 & c
\end{bmatrix}.\]
With this identification $\H_{g,2}^w$ is the quotient by $D_2/\mu_{g+1}$
of the codimension-two subvariety
$\HH_{sm}^w(2g+2,2) = \{ f \in \Aff_{sm}(2g+2)| f(0,1) = f(1,0) = 0\} \subset
\Aff_{sm}(2g+2)$.

A similar analysis shows that $[\M_{0,2g+2}/S_{2g}]$ is equivalent to the quotient
stack $[\HH_{sm}^w(2g+2,2)/(D_2/\mu_{2g+2})]$.
Since the quotient $D_2/\mu_{g+1}$ (resp. $D_2/\mu_{2g+2}$) is a rank-two torus $T,$ we may compute $A^*(\H_{g,2}^o)$, $A^*(\H_{g,2}^w)$ (resp. $A^*([\M_{0,2g+2}/S_{2g}])$), by computing the $T$-equivariant Chow groups. The $T$-actions are described as follows. 

\begin{itemize}
\item If $g$ is even, then using coordinates $(t_0,t_1)$ on $T$, the action on
  $\HH(2g+2,2)$ is given by
  $$(t_0,t_1)\cdot \left(f(x_0,x_1),s_0, s_\infty\right) = \left(
  (t_0t_1)^g f(x_0/t_0, x_1/t_1),t_0^{{g\over{2}}}t_1^{-{g+2\over{2}}}s_0, t_0^{-{g+2\over{2}}}t_1^{{g\over{2}}}s_\infty\right).$$
  
  \item If $g$ is odd, then using coordinates $(\alpha, \rho)$ on $T$,
the action on $\HH(2g+2,2)$ is given by
$$(\alpha, \rho) \cdot \left(f(x_0,x_1),s_0,s_\infty\right) =
\left(\alpha^{-2} \rho^{g+1} f(x_0/\rho, x_1),
\alpha^{-1} \rho^{{g+1\over{2}}}s_0, \alpha^{-1}\rho^{-{g+1\over{2}}}s_\infty\right).$$

  \item For the stack $[\M_{0,2g+2}/S_{2g}]$, using coordinates
    $(\alpha, \rho)$ on $T$ the action on $\HH^{w}(2g+2,2)$ is given by 
    $$(\alpha, \rho) \cdot f(x_0,x_1) = \alpha^{-1} \rho^{g+1} f(x_0/\rho, x_1).$$
\end{itemize}

As in the one-pointed case we can consider the map
$\Aff(2g) \to \Aff(2g+2)$, $h \mapsto x_0x_1 h$, and we see that
$\HH^w_{sm}(2g+2,2)$ is the image of the open set $U_{w,2} \subset \Aff(2g)$
where $U_{w,2} \coloneq (\Aff(2g) \setminus \Delta') \setminus (L_0 \cup L_\infty)$
is the open set parametrizing forms $h(x_0,x_1)$ of degree $2g$ with distinct roots such that both $h(1,0)$ and $h(0,1)$ are non-zero.

For suitable choices of $T$-action on $\Aff(2g)$, we can make the map
$U_{w,2} \to \HH_{sm}^w(2g+2,2)$ $T$-equivariant with respect to the various
actions on $\HH_{sm}^w(2g+2,2)$.
This can be summarized as follows.
\begin{prop}\label{action-2pt}\hspace*{\fill}
  \begin{itemize}
  \item If $g$ is even, then $A^*(\H_{g,2}^w) = A^*_T(U_{w,2})$ where the action
    of $T$ is given by $$(t_0,t_1) \cdot h(x_0,x_1) = (t_0t_1)^{g-1} h(x_0/t_0,x_1/t_1).$$
  \item If $g$ is odd, then $A^*(\H_{g,2}^w) = A^*_T(U_{w,2})$ where
    the action of $T$ is given by $$(\alpha, \rho) \cdot h(x_0,x_1) =
    \alpha^{-2} \rho^{g} h(x_0/\rho, x_1).$$
  \item For any genus $g$, $A^*([\M_{0,2g + 2}/S_{2g}]) = A^*_T(U_{w,2})$
    where $T$ acts by $$(\alpha, \rho) \cdot h(x_0,x_1) = \alpha^{-1} \rho^{g} h(x_0/\rho, x_1).$$
  \end{itemize}
\end{prop}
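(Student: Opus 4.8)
The plan is to realize all three identities as instances of a single $T$-equivariant isomorphism, in exact parallel with the one-pointed situation of Proposition~\ref{action-1pt}. The three assertions share the same underlying variety $\HH_{sm}^w(2g+2,2)$ and differ only in the choice of rank-two torus and its action, both of which have already been recorded in the itemized list above, together with the equivalences $\H_{g,2}^w \cong [\HH_{sm}^w(2g+2,2)/(D_2/\mu_{g+1})]$ and $[\M_{0,2g+2}/S_{2g}] \cong [\HH_{sm}^w(2g+2,2)/(D_2/\mu_{2g+2})]$. Thus it suffices to produce, for each of the three torus actions, a $T$-equivariant isomorphism $U_{w,2} \xrightarrow{\sim} \HH_{sm}^w(2g+2,2)$; passing to quotient stacks then yields the claimed equalities of equivariant Chow rings.

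First I would identify the image of the linear map $\Aff(2g) \to \Aff(2g+2)$, $h \mapsto x_0x_1 h$. Writing $f = \sum_i a_i x_0^i x_1^{2g+2-i}$, one has $f(0,1) = a_0$ and $f(1,0) = a_{2g+2}$, so the pair of conditions $f(0,1) = f(1,0) = 0$ cuts out precisely the forms divisible by $x_0x_1$. Hence $h \mapsto x_0x_1 h$ is a linear isomorphism onto the subspace defining $\HH^w(2g+2,2)$. I would then restrict to the smooth loci: the form $f = x_0x_1 h$ has $2g+2$ distinct roots exactly when $h$ has $2g$ distinct roots and neither $[0:1]$ (the root of $x_0$) nor $[1:0]$ (the root of $x_1$) is among them, the latter two conditions being $h(0,1) \neq 0$ and $h(1,0) \neq 0$. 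This is the description of $U_{w,2} = (\Aff(2g) \setminus \Delta') \setminus (L_0 \cup L_\infty)$, so the restriction is an isomorphism $U_{w,2} \xrightarrow{\sim} \HH_{sm}^w(2g+2,2)$.

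Finally I would verify $T$-equivariance for each action by direct substitution, the mechanism being that extracting the fixed factor $x_0x_1$ shifts the scaling weight by one. For instance, when $g$ is even the action on $\HH_{sm}^w(2g+2,2)$ sends $f = x_0x_1 h$ to $(t_0t_1)^g f(x_0/t_0,x_1/t_1) = x_0x_1 (t_0t_1)^{g-1} h(x_0/t_0,x_1/t_1)$, since the substituted factor $(x_0/t_0)(x_1/t_1)$ contributes $(t_0t_1)^{-1}$; this is exactly $x_0x_1$ times the asserted action $(t_0t_1)^{g-1} h(x_0/t_0,x_1/t_1)$ on $\Aff(2g)$. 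The odd-genus case and the $[\M_{0,2g+2}/S_{2g}]$ case are identical, each losing a single power of $\rho$ from the $x_0/\rho$ substitution. With equivariance established, $[U_{w,2}/T] \cong [\HH_{sm}^w(2g+2,2)/T]$, and the three equalities of Chow rings follow from the quotient presentations recalled above.

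The argument is essentially mechanical, so I do not expect a serious obstacle. The one point deserving care is the smooth-locus bookkeeping, namely confirming that deleting $\Delta'$, $L_0$, and $L_\infty$ from $\Aff(2g)$ matches exactly the distinct-roots condition on $x_0x_1 h$, together with keeping the torus coordinate conventions $(t_0,t_1)$ and $(\alpha,\rho)$ consistent with the Arsie--Vistoli identifications of \cite{ArVi:03} used to fix the weights in the list preceding the statement.
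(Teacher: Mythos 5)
Your proposal is correct and takes essentially the same route as the paper: the paper also relies on the previously established presentations $\H_{g,2}^w \cong [\HH_{sm}^w(2g+2,2)/(D_2/\mu_{g+1})]$ and $[\M_{0,2g+2}/S_{2g}] \cong [\HH_{sm}^w(2g+2,2)/(D_2/\mu_{2g+2})]$, and then identifies $\HH_{sm}^w(2g+2,2)$ with $U_{w,2}$ via $h \mapsto x_0x_1h$, choosing the $T$-weights on $\Aff(2g)$ so that this map is equivariant. Your explicit checks (image equals the locus $f(0,1)=f(1,0)=0$, matching of smooth loci, and the weight shift by one power of $t_0t_1$ or $\rho$) merely spell out what the paper summarizes in a single sentence before stating the proposition.
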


\subsection{The $3$-Weierstrass-pointed case}
A similar construction can be used to present
  $\H_{g,3}^w$ as closed quotient substack of the stack $\H_{g,3}^o$ parametrizing 3-pointed hyperelliptic curves where no two of the points sum to a $g^1_2$.
  In this case $\H_{g,3}^o = [\HH_{sm}(2g+2,3)/ (Z(\GL_2/\mu_{g+1}))]$ where
    $\HH_{sm}(2g+2,3) = \{ (f,s_0,s_1, s_\infty) | f(0,1) = s_0^2, f(1,1) = s_1^2, f(1,0) = s_\infty^2\} \subset \Aff_{sm}(2g+2) \times \Aff^3$ and $Z(\GL_2/\mu_{g+1}) = \G_m$
    is the center of $\GL_2/\mu_{g+1}$. Similarly, we also have that 
    $[\M_{0,2g + 2}/S_{2g - 1}]$ is equivalent to the quotient stack $[\HH_{sm}^w(2g + 2,3)/\G_m]$ where $\HH_{sm}^w(2g + 2,3) = \{f \in \Aff_{sm}(2g + 2) \mid f(0,1) = f(1,1) = f(1,0) = 0\}$ is a codimension-three subvariety in $\Aff_{sm}(2g + 2)$. The $\G_m$-action is described as follows.
    \begin{itemize}
    \item If $g$ is even, then the action on $\HH(2g + 2,3)$ is given by
      \begin{align*}
    	t \cdot (f(x_0,x_1),s_0,s_1,s_\infty) &= 
        \left(t^{2g}f(x_0/t,x_1/t), t^{-1}s_0, t^{-1}s_1, t^{-1}s_\infty \right)\\
        &= \left(t^{-2}f(x_0,x_1), t^{-1}s_0, t^{-1}s_1, t^{-1}s_\infty \right).
        \end{align*}
    \item If $g$ is odd, then the action on $\HH(2g + 2,3)$ is given by
    	\[t \cdot (f(x_0,x_1),s_0,s_1,s_\infty) = \left(t^{-2}f(x_0,x_1), t^{-1}s_0, t^{-1}s_1, t^{-1}s_\infty \right).\]
    \item For the stack $[\M_{0,2g + 2}/S_{2g - 1}]$, the action on $\HH^w(2g +2,3)$ is given by
    	\[t \cdot f(x_0,x_1) = t^{-1}f(x_0,x_1).\]
    \end{itemize}
    
    The stack $\H_{g,3}^w$ is the quotient of the closed codimension-three subset where all of the three $s_i$'s are 0. This in turn can be realized as
    $[U_{w,3}/\G_m]$ where $U_{w,3} \coloneq (\Aff(2g-1) \setminus \Delta') \setminus (L_0 \cup L_1 \cup L_\infty)$ is the parameter space of binary forms of degree $2g-1$ with distinct roots which do not vanish at $(0:1), (1:1), (1:0)$ in $\Pro^1$, with the induced action of $\G_m$ described in the following proposition.

\begin{prop}\label{action-3pt}\hspace*{\fill}
	\begin{itemize}
		\item For any genus $g$, $A^*(\H_{g,3}^w) = A^*_{\G_m}(U_{w,3})$ where the action of $\G_m$ is given by $$t \cdot h(x_0,x_1) = t^{-2}h(x_0,x_1).$$
		\item For any genus $g$, $A^*([\M_{0,2g + 2}/S_{2g - 1}]) = A^*_{\G_m}(U_{w,3})$ where $\G_m$ acts by $$t \cdot h(x_0,x_1) = t^{-1}h(x_0,x_1).$$
	\end{itemize}
\end{prop}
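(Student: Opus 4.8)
The plan is to mirror the reductions already carried out in the one- and two-pointed cases (Propositions \ref{action-1pt} and \ref{action-2pt}), but now with the center $\G_m = Z(\GL_2/\mu_{g+1})$ in place of the rank-two torus $T$. First I would observe that the proposition is really two assertions glued together: an identification of each stack with a $\G_m$-quotient of a parameter space, and a bookkeeping of how the given $\G_m$-action on $\HH^w(2g+2,3)$ transports to an action on $\Aff(2g-1)$ under the degree-lowering map. The stack-theoretic identifications $\H_{g,3}^w = [U_{w,3}/\G_m]$ and $[\M_{0,2g+2}/S_{2g-1}] = [\HH_{sm}^w(2g+2,3)/\G_m]$ are supplied by the discussion immediately preceding the statement, so what remains is to make the torus-equivariance precise and read off the weights.

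The key computation is to track the map $\Aff(2g-1) \hookrightarrow \Aff(2g+2)$, $h \mapsto x_0 x_1 (x_0 - x_1) h$ (the cubic factor $x_0 x_1(x_0-x_1)$ vanishing exactly at $(0:1),(1:0),(1:1)$ is what forces $f$ to vanish at the three marked points). I would then restrict the $\G_m$-actions displayed just above the proposition to this image. In the even-genus case the action on $f$ is $t \cdot f = t^{-2} f$ (equivalently $t^{2g} f(x_0/t, x_1/t)$, since $f$ has degree $2g+2$ and $\det$ contributes $t^{2g}$), and since the cubic monomial prefactor is itself homogeneous, pulling back the action to $h$ of degree $2g-1$ yields $t \cdot h = t^{-2} h$; in the odd-genus case the displayed action on $f$ is identical, $t^{-2} f$, so the induced action on $h$ is again $t^{-2} h$, which is exactly why the statement is genus-independent. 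For the $\M_{0,2g+2}$ quotient the action on $f$ is $t^{-1} f$, giving $t \cdot h = t^{-1} h$. Once the weights are pinned down, the equality of Chow rings follows from flat pullback along the affine-bundle/open-immersion identifications, exactly as in the earlier propositions.

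The one point requiring genuine care — and the step I expect to be the main obstacle — is verifying that the prefactor $x_0 x_1 (x_0 - x_1)$ is genuinely $\G_m$-\emph{invariant} under the center, so that the induced action on $h$ has the clean weight claimed and no stray character creeps in. Because $\G_m$ here is the center $Z(\GL_2/\mu_{g+1})$ acting by scalars, it acts on the linear forms $x_0, x_1$ with a common weight; the product of three linear forms then scales by the cube of that weight, and one must check that this cube is absorbed correctly into the normalization of the action on $f$ (i.e.\ that the weights on $s_0, s_1, s_\infty$ and on $f$ are mutually consistent with the constraints $f(0,1)=s_0^2$ etc.). I would confirm this directly from the two equivalent expressions given in the even-genus display, $t^{2g} f(x_0/t,x_1/t) = t^{-2} f(x_0,x_1)$, which already encodes the cancellation, and then transport it through the closed-immersion $s_0=s_1=s_\infty=0$. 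The remaining verifications — distinctness of roots, openness of $U_{w,3}$, and $T$-equivariance of the degree-lowering map — are routine and parallel to the two-pointed case.
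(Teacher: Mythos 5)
Your proposal is correct and matches the paper's (implicit) argument: the proposition is just a summary of the construction immediately preceding it, and your weight computation via the embedding $h \mapsto x_0x_1(x_0 - x_1)h$, using that the center acts as pure scaling $f \mapsto t^{-2}f$ (resp.\ $t^{-1}f$) so that the induced weight on $h$ is the same, is exactly the intended content. The only cosmetic slip is your appeal to ``affine-bundle'' identifications at the end: unlike the $n=1,2$ cases, no Borel-to-torus reduction is needed here since the group is already $\G_m = Z(\GL_2/\mu_{g+1})$, and $A^*([U_{w,3}/\G_m]) = A^*_{\G_m}(U_{w,3})$ holds by definition of equivariant Chow groups.
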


    \begin{remark}
      The integral Chow ring of $\H_{g,3}^w$ can also be computed using Proposition \ref{prop.danpetersen}. However, the advantage of working with the presentation $[U_{w,3}/\G_m]$ is that it will allow us to identify a natural generator
      for $\Pic(\H_{g,3}^w)$.
      \end{remark}

\section{Computing the Chow rings}
In this section we prove Theorem \ref{thm.Hgw} and \ref{thm.Hg2w} by computing the appropriate $T$-equivariant Chow rings as indicated in Proposition \ref{action-1pt} and \ref{action-2pt}. 
We begin with a general
result on the $T$-equivariant Chow ring of the set of binary forms of degree $N$
with distinct roots.

\subsection{The equivariant Chow ring of $\Aff(N)\setminus \Delta'$}\label{general-result}\hspace*{\fill}

\noindent Suppose that we are given an arbitrary action of $T = (\G_m)^2$ on $\Aff(N)$, the affine space of binary forms of degree $N$ in $x_0,x_1$, by
\begin{align}\label{action}
    (t_0,t_1) \cdot f(x_0,x_1) = t_0^a t_1^b f(t_0^{-\alpha_0} t_1^{-\alpha_1} x_0, t_0^{-\beta_0} t_1^{-\beta_1} x_1).
\end{align}
The goal of this section is to give a presentation for the equivariant
Chow ring $A^*_T(\Aff(N) \setminus \Delta')$ where $\Delta'$ is the
discriminant locus of $\Aff(N)$ as defined before, in terms of the
weights of the action.

Let $\lambda_1 : T \to \G_m, (t_0,t_1) \mapsto t_0$ and $\lambda_2 : T \to \G_m, (t_0,t_1) \mapsto t_1$ be the standard generators for the character group of the torus $T$. Set $c_1(\lambda_1) = l_1$ and $c_1(\lambda_2) = l_2$ in $A^*(BT)$. Define the characters $\chi_1,\chi_2$ by $\chi_1(t) = t_0^{\alpha_0}t_1^{\alpha_1}, \chi_2(t) = t_0^{\beta_0}t_1^{\beta_1}$, and set
\begin{align*}
    T_1 &\coloneq c_1(\chi_1) = c_1(\lambda_1^{\alpha_0}\lambda_2^{\alpha_1}) = \alpha_0 l_1+ \alpha_1 l_2, \\
    T_2 &\coloneq c_1(\chi_2) = c_1(\lambda_1^{\beta_0}\lambda_2^{\beta_1}) = \beta_0 l_1 + \beta_1 l_2.
\end{align*}

Let $\B(N) = \Pro(\Aff(N))$ be the projectivization of the affine space $\Aff(N)$ of binary forms of degree $N$.
We denote by $\Delta$ the image of the discriminant locus $\Delta'$ under the projectivization map.

\begin{prop} \label{prop.disc}
The equivariant Chow ring $A^*_T(\Aff(N) \setminus \Delta')$ is a quotient ring
\[\Z[l_1,l_2,\xi]/(\alpha_{1,0}(\xi),\alpha_{1,1}(\xi),al_1 + bl_2 - \xi,p(\xi))\]
where $\xi = c_1(\mathcal{O}_{\B(N)}(1))$ is the hyperplane class and
\begin{align*}
    \alpha_{1,0} &= 2(N - 1)\xi - N(N - 1)(T_1 + T_2) \\
                 &= 2(N - 1)\xi - N(N - 1)[(\alpha_0 + \beta_0)l_1 + (\alpha_1 + \beta_1)l_2], \\
    \alpha_{1,1} &= \xi^2 - (T_1 + T_2)\xi - N(N - 2)T_1T_2 \\
                 &= \xi^2 - [(\alpha_0 + \beta_0)l_1 + (\alpha_1 + \beta_1)l_2]\xi - N(N - 2)(\alpha_0l_1 + \alpha_1l_2)(\beta_0l_1 + \beta_1l_2), \\
    p(\xi) &= \prod_{i = 0}^N [\xi - (N - i)T_1 - iT_2] \\
           &= \prod_{i = 0}^N [\xi - (N - i)(\alpha_0l_1 + \alpha_1l_2) - i(\beta_0l_1 + \beta_1l_2)].
\end{align*}
\end{prop}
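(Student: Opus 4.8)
The goal is to compute $A^*_T(\Aff(N) \setminus \Delta')$ for the general action \eqref{action}. The plan is to use the standard device of passing to the projectivization $\B(N) = \Pro(\Aff(N))$ to trivialize the global scaling, then apply the equivariant projective bundle theorem, and finally excise the discriminant. First I would observe that $\Aff(N) \setminus \Delta' \to \B(N) \setminus \Delta$ is a $\G_m$-torsor, whose total space sits inside the line-bundle-minus-zero-section picture: removing the origin from $\Aff(N)$ gives a $\G_m$-bundle over $\B(N)$ associated to $\mathcal{O}_{\B(N)}(-1)$, but twisted by the $T$-linearization. Concretely, the global $\G_m$ acts on $f$ with weight recorded by the character $t_0^a t_1^b$ (the leading scalar in \eqref{action}), so the first Chern class of the associated equivariant line bundle is $\xi - (al_1 + bl_2)$ — up to sign this produces the relation $al_1 + bl_2 - \xi$ in the presentation. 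Thus $A^*_T(\Aff(N)\setminus\Delta')$ is the quotient of $A^*_T(\B(N)\setminus\Delta)$ by this single class.

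Next I would compute $A^*_T(\B(N))$ via the $T$-equivariant projective bundle theorem. Since $\B(N) = \Pro(\Aff(N))$ and $\Aff(N)$ decomposes $T$-equivariantly as a sum of the $N+1$ one-dimensional weight spaces spanned by the monomials $x_0^{N-i}x_1^i$, the Chern roots of the linearized bundle are exactly $(N-i)T_1 + iT_2$ for $i = 0, \ldots, N$, where $T_1, T_2$ are the characters by which $x_0, x_1$ transform. The projective bundle theorem then gives
\[
A^*_T(\B(N)) = \Z[l_1,l_2,\xi]\big/\big(p(\xi)\big), \qquad p(\xi) = \prod_{i=0}^N\big[\xi - (N-i)T_1 - iT_2\big],
\]
which is precisely the stated relation $p(\xi)$. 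The main work is then to determine the kernel of the restriction $A^*_T(\B(N)) \to A^*_T(\B(N)\setminus\Delta)$, i.e.\ the classes supported on the discriminant divisor $\Delta$. By the localization sequence in equivariant Chow theory, this kernel is the image of the pushforward from $\Delta$, generated by $[\Delta]$ together with $\xi$-multiples; since $\Delta$ is the image of a degeneracy locus, I expect the generators to come from the two classes $\alpha_{1,0}$ and $\alpha_{1,1}$ appearing in the statement, which should be interpretable as (a linear combination of) the equivariant class of $\Delta$ and a secondary relation.

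\textbf{The main obstacle} will be identifying the two discriminant relations $\alpha_{1,0}$ and $\alpha_{1,1}$ precisely and showing they generate the whole kernel. The clean way is to reduce to the universal case where $T$ acts trivially on coefficients, compute $A^*_{\GL_2}(\Aff(N)\setminus\Delta')$ or the analogous $\PGL_2$/$T$-equivariant ring for the standard scaling action on the roots, and then transport the answer along the general weights via the substitutions $T_1 = \alpha_0 l_1 + \alpha_1 l_2$, $T_2 = \beta_0 l_1 + \beta_1 l_2$. The presence of the explicit coefficients $2(N-1)$, $N(N-1)$, $N(N-2)$ strongly suggests this is exactly the mechanism: these are the degrees and intersection numbers attached to the discriminant of a binary form of degree $N$, so $\alpha_{1,0}$ and $\alpha_{1,1}$ should be recognized as the two generating relations for the $T$-equivariant Chow ring of the space of distinct-root forms under the standard action (a computation that can be located in, or adapted from, \cite{diL:18} and \cite{FuVi:11}), pulled back along the weight specialization. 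I would verify that after imposing $al_1 + bl_2 - \xi$ one recovers the correct ranks of the graded pieces, confirming that no further relations are needed and that $\alpha_{1,0}, \alpha_{1,1}, al_1 + bl_2 - \xi, p(\xi)$ generate the full relation ideal.
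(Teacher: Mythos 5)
Your proposal follows essentially the same route as the paper's proof: pass to $\B(N)$, apply the $T$-equivariant projective bundle theorem with Chern roots $(N-i)T_1 + iT_2$ to get $p(\xi)$, excise the discriminant via the two classes $\alpha_{1,0},\alpha_{1,1}$, and descend along the $\G_m$-torsor $\Aff(N)\setminus\Delta' \to \B(N)\setminus\Delta$ associated to $\lambda_1^{\otimes a}\otimes\lambda_2^{\otimes b}\otimes\mathcal{O}(-1)$, which contributes the relation $al_1+bl_2-\xi$. The only difference is sourcing for the step you flag as the main obstacle: the paper quotes \cite[Proposition 4.2]{EdFu:07}, which computes the image of $A^*_T(\Delta)\to A^*_T(\B(N))$ exactly in this generality (together with \cite[Lemma 2.3]{EdFu:07} for the projective bundle step and the argument of \cite[Lemma 3.2]{EdFu:07} for surjectivity of the torsor pullback), so no adaptation of \cite{diL:18} or \cite{FuVi:11} and no verification of graded ranks is needed.
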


\begin{proof}
Choose coordinates $(X_0 : X_1 : \ldots : X_N)$ on $\B(N)$ so that the coordinate function $X_i$ is the coefficient of $x_0^{N - i}x_1^i$ in a binary form of degree $N$. Then there is an induced action of $T$ on $\B(N)$ given by
\[t\cdot (X_0 : \ldots : X_N) = (\chi_1^{-N}(t)X_0 : \ldots : \chi_1^{-(N - i)}(t)\chi_2^{-i}(t)X_i : \ldots : \chi_2^{-N}(t)X_N).\]

Following \cite[Lemma 2.3]{EdFu:07} which is proved for $\G_m$-actions in \cite[Section 3.3]{EdGr:98}, we obtain that $A^*_T(\B(N)) = \Z[l_1,l_2,\xi]/p(\xi)$ where $\xi = c_1(\mathcal{O}_{\B(N)}(1))$ is the hyperplane class, and $p(\xi) = \prod_{i = 0}^N (\xi - (N - i)T_1 - iT_2)$ is a degree $N + 1$ monic polynomial since $\{-(N - i)T_1 - iT_2\}_{i = 0}^N$ is the set of equivariant Chern roots of the representation $\Aff(N)$.

By \cite[Proposition 4.2]{EdFu:07} the ideal generated by the image of $A^*_T(\Delta) \to A^*_T(\B(N))$ is computed, and we get
\[A^*_T(\B(N) \setminus \Delta) = A^*_T(\B(N))/(\alpha_{1,0}(\xi),\alpha_{1,1}(\xi)).\]

Now we have a $\G_m$-torsor $\Aff(N) \setminus \Delta' \to \B(N) \setminus \Delta$ associated to the line bundle $\lambda_1^{\otimes a} \otimes \lambda_2^{\otimes b} \otimes \mathcal{O}(-1)$. An argument similar to the one used in the proof of \cite[Lemma 3.2]{EdFu:07} shows that the pullback $A^*_T(\B(N) \setminus \Delta) \to A^*_T(\Aff(N) \setminus \Delta')$ is surjective with kernel generated by $al_1 + bl_2 - \xi$. Substituting $\xi = al_1 + bl_2$ yields the presentation of the proposition.
\end{proof}

\subsection{Integral Chow ring of $\H_{g,n}^w$ for even genus $g$ and $n = 1,2$}\hspace*{\fill}

\noindent In this section, we will apply the results
of the previous section to compute the integral Chow rings of $\H_{g,n}^w$ for even genus $g$ and $n = 1,2$.

\subsubsection{$1$-Weierstrass-pointed case}

Recall that by Proposition \ref{action-1pt} if the genus $g$ is even,
then $A^*(\H_{g,w})=A^*_T(U_w)$ where $U_w = (\Aff(2g + 1)\setminus \Delta') \setminus L$ with the action of $T = \G_m^2$ on $\Aff(2g + 1)$ defined by
\[(t_0,t_1)\cdot f(x_0,x_1) = t_0^{g - 1}t_1^{g}f(x_0/t_0,x_1/t_1).\]
Then by Proposition \ref{prop.disc} with
$N = 2g + 1$, $a = g - 1, b = g$ and $(\alpha_0,\alpha_1) = (1,0), (\beta_0,\beta_1) = (0,1)$ we have 
\begin{align*}
A_T^*(\Aff(2g + 1) \setminus \Delta') &= \dfrac{\Z[l_1,l_2][\xi]}{\left(\alpha_{1,0}(\xi),\alpha_{1,1}(\xi),(g-1)l_1 + gl_2 - \xi, p(\xi)\right)}
\end{align*}
where
\[p(\xi) = \prod_{i = 0}^{2g + 1} [\xi - (2g - i + 1)l_1 - il_2].\]

Let $i : L \setminus (\Delta' \cap L) \hookrightarrow \Aff(2g + 1)\setminus \Delta'$ be the $T$-equivariant closed embedding.
By the localization sequence for equivariant Chow groups,
$A^*_T( (\Aff(2g+1) \setminus \Delta') \setminus L)$ is the quotient
of $A^*_T(\Aff(2g+1)) \setminus \Delta')$ by 
$$\im \left(i_* \colon  A^*_T(L \setminus (\Delta' \cap L)) \to A^*_T(\Aff(2g+1) \setminus \Delta')\right).$$
Since $L$ and $\Aff(2g+1)$ are linear subspaces, the pullback $i^* \colon A^*_T(\Aff(2g+1) \setminus \Delta')\to
A^*_T(L \setminus (\Delta' \cap L))$ is necessarily surjective because it factors
the surjective map $A^*_T(\Spec k) \to A^*_T(L \setminus (\Delta' \cap L))$
through the surjection $A^*_T(\Spec k) \to A^*_T(\Aff(2g+1) \setminus \Delta')$.
By the projection formula $i_*i^*[\alpha] = [\alpha] \cdot [L \setminus (\Delta' \cap L)]$ for any class $[\alpha] \in A^*(\Aff(2g+1)\setminus \Delta')$. Thus
the image of $i_*$ is generated by the image of the fundamental class of the hyperplane $[L]_T$ under the composite $i_* j^*$ where $j \colon L \setminus
(\Delta' \cap L) \hookrightarrow L$ is the open immersion.

Thus to finish obtaining a presentation for $A^*_T(U_w)$ we need
only compute the $T$-equivariant fundamental class of the hyperplane
$L$. This hyperplane is defined by the vanishing of the coefficient of $x_1^{2g+1}$
in a binary form and its $T$-equivariant fundamental class is
$$[L]_T = c_1\left(\lambda_1^{g - 1}\lambda_2^{g - (2g + 1)}\right) = (g-1)l_1 - (g+1)l_2.$$
Thus we get
$$
 A_T^*(\mathbb{A}(2g + 1) \setminus \Delta' \setminus L) =
 \dfrac{\Z[l_1,l_2]}{\left(\alpha_{1,0}(\xi),\alpha_{1,1}(\xi), p(\xi),
    (g-1)l_1-(g+1)l_2\right)}$$
where
\begin{align*}
& \xi = (g-1)l_1 + gl_2, \\
& \alpha_{1,0}(\xi) = -\left[2g(l_1 + l_2) + 4gl_1\right], \\
& \alpha_{1,1}(\xi) = (g - 1)(g - 2)l_1^2 + g(g - 1)l_2^2 - 2(g^2 + 2g - 1)l_1l_2.
\end{align*}
\begin{prop}
The polynomial $p((g - 1)l_1 + gl_2)$ is in the ideal generated by $\alpha_{1,0}((g - 1)l_1 + gl_2)$, $\alpha_{1,1}((g - 1)l_1 + gl_2)$ and $(g - 1)l_1 - (g + 1)l_2$.
\end{prop}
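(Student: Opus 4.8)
The plan is to show that after substituting $\xi = (g-1)l_1 + gl_2$ and working modulo the ideal $I = (\alpha_{1,0}, \alpha_{1,1}, (g-1)l_1 - (g+1)l_2)$, the monic polynomial $p(\xi)$ reduces to zero. The key observation is that the three generators of $I$ already impose enough relations on the two-variable polynomial ring $\Z[l_1,l_2]$ that the quotient is a very small ring, so that checking $p(\xi) \in I$ becomes a finite computation.

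First I would use the linear relation $(g-1)l_1 - (g+1)l_2 = 0$ to work on the hyperplane where $(g-1)l_1 = (g+1)l_2$. On this locus I can try to express everything in terms of a single variable, or at least reduce the number of effective parameters. Since $\alpha_{1,0}(\xi)$ is linear in $l_1, l_2$, it gives a second linear relation: combined with $(g-1)l_1 - (g+1)l_2$, the two linear generators $\alpha_{1,0}$ and $(g-1)l_1-(g+1)l_2$ cut $A^*(BT) = \Z[l_1,l_2]$ down to a cyclic group in degree one. Concretely, $\alpha_{1,0}(\xi) = -[2g(l_1+l_2) + 4gl_1] = -(6gl_1 + 2gl_2)$, so modulo these two linear relations the degree-one part is a finite cyclic group whose order I would compute from the $2\times 2$ integer matrix of coefficients; this is the annihilator $2g(2g+1)$ appearing in Theorem~\ref{thm.Hgw} (after accounting for the weight-difference between $\H_{g,w}$ and the $S_{2g+1}$-quotient). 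The quadratic relation $\alpha_{1,1}(\xi)$ then controls degree two, and so on.

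The heart of the argument is to prove that in this quotient ring every class in sufficiently high degree vanishes, so that the high-degree monic polynomial $p(\xi)$ (of degree $2g+2$ in $\xi$, hence degree $2g+2$ in the $l_i$) is automatically in $I$. I would argue that once the two linear relations kill most of degree one and $\alpha_{1,1}$ kills degree two up to a torsion class, the graded pieces stabilize to finite torsion groups that vanish above a fixed degree; since $p(\xi)$ has degree far exceeding that bound, its image is forced to be zero. Equivalently, one checks that the associated graded quotient $\Z[l_1,l_2]/I$ is concentrated in low degrees, so the degree of $p(\xi)$ alone guarantees membership. This mirrors the final step in the proof of Theorem~\ref{thm.m2g+2}, where one verifies that a monic polynomial lies in (rather than outside) the relevant ideal.

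The main obstacle I anticipate is the torsion bookkeeping: the linear and quadratic relations have coefficients like $2g$, $4g$, $g(g-1)$, $(g-1)(g-2)$, and $2(g^2+2g-1)$, and I must track exactly which integer multiples of $l_1^k l_2^{N-k}$ survive in each degree to be sure $p(\xi)$'s coefficients are expressible in $I$. I expect the cleanest route is to reduce modulo the two linear forms first — writing both $l_1$ and $l_2$ in terms of a single generator of the degree-one quotient — thereby turning $\alpha_{1,1}(\xi)$ and $p(\xi)$ into \emph{univariate} expressions whose membership can be settled by a direct divisibility check. The danger is that the parity of $g$ or a common factor with $2g+2$ (which we have excluded from $\Char k$) could introduce a subtle sign or coefficient that must be handled, so I would verify the divisibility explicitly rather than by a dimension count alone.
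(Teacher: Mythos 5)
Your proposal has a genuine gap at its central step. You argue that the quotient $\Z[l_1,l_2]/I$ ``is concentrated in low degrees,'' so that the degree of $p(\xi)$ alone forces membership in $I$. This is false: by the very computation this proposition feeds into, the quotient by the two linear generators is $\Z[l_1,l_2]/(6gl_1+2gl_2,\,(g-1)l_1-(g+1)l_2) \cong \Z[\psi]/(4g(2g+1)\psi)$, which has a nonzero cyclic torsion group of order $4g(2g+1)$ in \emph{every} positive degree. There is no degree above which classes automatically vanish, so no dimension or degree count can establish $p(\xi)\in I$; for instance $l_1^{2g+2}$ is not in $I$. Your fallback plan --- reduce modulo the linear forms to a univariate ring and check divisibility of the image of $p$ by $4g(2g+1)$ --- could in principle be carried out (using the Smith normal form $l_2 \equiv -(4g^2-2g+1)l_1$, one factor of $p$ reduces to $2g^2(2g+1)l_1$, which is divisible by $4g(2g+1)$ precisely because $g$ is even here), but you never perform this check, and your stated justification for why it must succeed is the incorrect vanishing claim.

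You also missed the observation that makes the paper's proof one line: $p(\xi)$ is by construction a \emph{product of linear forms},
\[p((g-1)l_1+gl_2) = \prod_{i=0}^{2g+1}\bigl[(-g+i-2)l_1 + (g-i)l_2\bigr],\]
and the factor with $i = 2g+1$ is exactly $(g-1)l_1 - (g+1)l_2$, one of the three generators of the ideal. Hence $p$ lies in $I$ with no computation at all --- no reduction modulo the linear relations, no use of $\alpha_{1,0}$ or $\alpha_{1,1}$, and no torsion bookkeeping. When a polynomial to be tested is presented as a product of Chern-root factors, the first thing to check is whether one factor already coincides with (or is a multiple of) an ideal generator; here it does, and all the machinery in your proposal is unnecessary.
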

\begin{proof}
We can write
\[p((g - 1)l_1 + gl_2) = \prod_{i = 0}^{2g + 1}\left[(-g + i - 2)l_1 + (g - i)l_2\right].\]
Notice that the $(i = 2g + 1)$-th term on the right hand side of the above equality is
\[(g - 1)l_1 + (-g - 1)l_2,\]
thus it is contained in the ideal generated by $(g - 1)l_1 - (g + 1)l_2$.
\end{proof}

\begin{prop}
$\alpha_{1,1}((g - 1)l_1 + gl_2)$ is in the ideal generated by $\alpha_{1,0}((g - 1)l_1 + gl_2)$ and $(g - 1)l_1 - (g + 1)l_2$. 
\end{prop}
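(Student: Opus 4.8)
The plan is to prove membership by exhibiting an explicit $\Z[l_1,l_2]$-linear combination, rather than invoking any general elimination machinery. First I would record, from the computation preceding the statement, the two degree-one generators of the ideal and the target: writing $r=(g-1)l_1-(g+1)l_2$ for the fundamental class of $L$ and $s=\alpha_{1,0}((g-1)l_1+gl_2)=-6gl_1-2gl_2$, the element to be captured is
\[\alpha_{1,1}\big((g-1)l_1+gl_2\big)=(g-1)(g-2)l_1^2-2(g^2+2g-1)l_1l_2+g(g-1)l_2^2,\]
and all three of $\alpha_{1,1}$, $r$, $s$ are homogeneous in $l_1,l_2$ of degrees $2$, $1$, $1$ respectively.

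Because every generator is homogeneous, I may look for cofactors that are linear forms $pl_1+ql_2$ and $ul_1+vl_2$ with \emph{integer} coefficients, so that $\alpha_{1,1}=(pl_1+ql_2)\,r+(ul_1+vl_2)\,s$. Comparing coefficients of $l_1^2$, $l_1l_2$, $l_2^2$ turns this into a small linear system in $p,q,u,v$ over $\Z$. I would solve it as follows: setting $u=0$, the $l_1^2$-coefficient forces $p=g-2$; the two remaining equations, for $q$ and $v$, then combine (eliminating $v$) into $2(2g+1)q=-2g(2g+1)$, whence $q=-g$ and then $v=1$. This produces the identity
\[\alpha_{1,1}\big((g-1)l_1+gl_2\big)=\big((g-2)l_1-gl_2\big)\big((g-1)l_1-(g+1)l_2\big)+l_2\,\alpha_{1,0}\big((g-1)l_1+gl_2\big),\]
which I would then confirm by a direct expansion of the two products and term-by-term comparison.

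The only delicate point—and the step I would flag as the real content—is integrality: a priori the linear system need only be solvable over $\Q$, and it is a genuine feature of these particular weights that no denominators survive. Concretely, the solution $q=-g$ arises because the common factor $2g+1$ cancels on both sides of $2(2g+1)q=-2g(2g+1)$; had this cancellation failed, $\alpha_{1,1}$ would lie in the ideal only after inverting $2g+1$, and the integral statement would be false. Once $q=-g$ is in hand the substitutions back for $v$ and the final expansion are entirely routine, so modulo this integrality check the proof is a direct verification.
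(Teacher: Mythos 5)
Your proposal is correct and takes essentially the same approach as the paper: the identity you arrive at, $\alpha_{1,1}\big((g-1)l_1+gl_2\big) = l_2\cdot\alpha_{1,0}\big((g-1)l_1+gl_2\big) + \big[(g-2)l_1-gl_2\big]\cdot\big[(g-1)l_1-(g+1)l_2\big]$, is exactly the explicit linear combination the paper writes down (the paper simply states it, while you show how to find it by solving the coefficient system). One minor caveat: your remark that failure of the cancellation in $2(2g+1)q=-2g(2g+1)$ would falsify the integral statement is overstated, since you imposed $u=0$ by fiat and integrality could a priori be rescued by other choices of cofactors.
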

\begin{proof}
For simplicity, let us denote by $g_1,g_2$ the two degree one generators $\alpha_{1,0}((g - 1)l_1 + gl_2)$ and $(g - 1)l_1 - (g + 1)l_2$ respectively.
It is easy to see that
\[\alpha_{1,1}((g - 1)l_1 + gl_2) = (l_2)\cdot g_1 + [(g - 2)l_1 - gl_2]\cdot g_2. \qedhere\]
\end{proof}

Therefore for even genus $g$,
\[A^*(\H_{g,w}) = \dfrac{\Z[l_1,l_2]}{(6gl_1 + 2gl_2,(g - 1)l_1 - (g + 1)l_2)}.\]
Note that there is a natural ring homomorphism $A^*(\H_g) \to A^*(\H_{g,w})$, and it can be given explicitly by sending $c_1 \mapsto l_1 + l_2$ and $c_2 \mapsto l_1l_2$. More precisely, 
\[\begin{tikzcd}[row sep = huge]
A^*(\H_{g}) = \dfrac{\Z[c_1,c_2]}{(2(2g + 1)c_1,g(g - 1)c_1^2 - 4g(g + 1)c_2)} \arrow[d,"\substack{c_1 \mapsto l_1 + l_2 \\ c_2 \mapsto l_1l_2}"] \\
A^*(\H_{g,w}) = \dfrac{\Z[l_1,l_2]}{(6gl_1 + 2gl_2,(g - 1)l_1 - (g + 1)l_2)}.
\end{tikzcd}\]
We can check that the images of the generators of ideal for $A^*(\H_g)$ are indeed vanishing in the Chow ring $A^*(\H_{g,w})$. Namely, let $f_1(c_1) = 2(2g + 1)c_1$ and $f_2(c_1,c_2) = g(g - 1)c_1^2 - 4g(g + 1)c_2$, then
\begin{align*}
f_1(l_1 + l_2) &= 2(2g + 1)(l_1 + l_2) \\
&= 1 \cdot g_1 + (-2) \cdot g_2,
\end{align*}
and
\begin{align*}
f_2(l_1+l_2,l_1l_2) &= g(g - 1)(l_1 + l_2)^2 - 4g(g + 1)l_1l_2 \\
&= (-l_2)\cdot g_1 + [g(l_1 - l_2)]\cdot g_2,
\end{align*}
where $g_1,g_2$ denote two degree one generators of ideal of $A^*(\H_{g,w})$ as above.

Furthermore, the above ring map is injective in codimension one. In order to see this, for integers $A,B,a \in \Z$ such that
\[A(6gl_1 + 2gl_2) + B[(g - 1)l_1 - (g + 1)l_2] = a(l_1 + l_2), \]
we need to prove that the minimum integer $a$ with the above equality holds has to be $2(2g + 1)$. It can be seen by comparing the coefficients of $l_1,l_2$ on both sides, so we have
\[A = \dfrac{a}{2(2g + 1)},\quad B = \dfrac{-a}{2g + 1},\]
and thus it implies the ring homomorphism is injective on Picard groups.

In particular, by computing the Smith normal form of the two degree one relations, we can write
\begin{align*}
A^*(\H_{g,w}) &= \dfrac{\Z[l_1,l_2]}{(l_2 + (4g^2 - 2g + 1)l_1, 4g(2g + 1)l_1)} \\
&\cong \dfrac{\Z[l_1]}{(4g(2g + 1)l_1)}
\end{align*} 
which means the integral Picard group $\Pic(\H_{g,w})$ for even genus $g$ is cyclic of order $4g(2g + 1)$. \\

{\noindent\bf Natural generator for $A^*(\H_{g,w})$.}
There are multiple integral linear combinations of $l_1, l_2$
which generate the integral Picard group of $\H_{g,w}$. We conclude our proof
by showing that the
restriction to $\H_{g,w}$ of $\psi$-class on $\H_{g,1}$ generates $A^*(\H_{g,w})$.
\begin{cor}
  $A^*(\H_{g,w})$ is generated by $\psi$, and consequently
  $$A^*(\H_{g,w}) = \Z[\psi]/(4g(2g+1)\psi).$$
\end{cor}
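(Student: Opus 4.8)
The plan is to leverage the presentation $A^*(\H_{g,w}) = \Z[l_1]/(4g(2g+1)l_1)$ just obtained (recall we are in the even-genus case). In this ring every graded piece $A^i$ with $i \geq 1$ is cyclic of order $4g(2g+1)$, generated by $l_1^i$. Hence, to prove that $\psi$ generates the whole ring it suffices to show that in $A^1(\H_{g,w}) \cong \Z/4g(2g+1)$ the class $\psi$ equals $u\cdot l_1$ for some integer $u$ with $\gcd(u,4g(2g+1))=1$; then $\psi^i$ generates each $A^i$ and the stated presentation $A^*(\H_{g,w}) = \Z[\psi]/(4g(2g+1)\psi)$ follows.

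The crux is therefore to express $\psi = c_1(\sigma^*\Omega_{C/S})$ as a $T$-weight, where $\sigma$ is the universal Weierstrass section. I would compute this using the local structure of the hyperelliptic double cover $C \to P$ at the marked point $(0:1)$. Writing $u = x_0/x_1$ for the affine coordinate on $P$ near $(0:1)$ and $y$ for the fiber coordinate of the double cover, ramification at the Weierstrass point gives a relation $u = c\,y^2$ with $c$ a unit; equivariance then forces $\operatorname{wt}(y) = \tfrac12(\operatorname{wt}(u) - \operatorname{wt}(c))$. Here $\operatorname{wt}(u) = l_2 - l_1$ is the weight of the cotangent line of $P$ at the fixed point $(0:1)$, while $c$ is the reciprocal of $h(0,1)$, whose weight is exactly the class $[L]_T = (g-1)l_1 - (g+1)l_2$ already computed. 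Since $\psi = \operatorname{wt}(dy) = \operatorname{wt}(y)$, this yields
\[\psi = \tfrac12\bigl((l_2 - l_1) + (g-1)l_1 - (g+1)l_2\bigr) = \tfrac{g-2}{2}\,l_1 - \tfrac{g}{2}\,l_2,\]
an integral class because $g$ is even. I expect this weight computation — correctly accounting for the factor $\tfrac12$ coming from the ramification and for the twist carried by $c$ — to be the main obstacle; everything after it is arithmetic.

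Finally, I would reduce $\psi$ modulo the degree-one relations. Using $l_2 = -(4g^2 - 2g + 1)l_1$ and $4g(2g+1)l_1 = 0$ from the Smith normal form, a direct substitution gives $\psi = (2g^3 - g^2 + g - 1)\,l_1$ in $A^1(\H_{g,w})$. It then remains to verify that $\gcd\bigl(2g^3 - g^2 + g - 1,\ 4g(2g+1)\bigr) = 1$, which I would check against each factor: modulo $2$ the coefficient reduces to $1$ (using that $g$ is even), modulo $g$ it reduces to $-1$, and modulo $2g+1$ (substituting $2g \equiv -1$) it reduces to $-2$, which is coprime to the odd number $2g+1$. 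Hence the coefficient is a unit in $\Z/4g(2g+1)$, so $\psi$ generates $A^1(\H_{g,w})$ and therefore the entire Chow ring, completing the proof.
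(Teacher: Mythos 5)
Your proposal is correct and shares the paper's overall strategy --- express $\psi$ as an explicit $T$-weight in $A^1_T(U_w)$ and verify it generates the cyclic group of order $4g(2g+1)$ --- but both key steps are carried out differently. For the weight, the paper never opens up local coordinates on the curve: it notes that $\H_{g,w}$ is cut out by $s=0$ in $\H_{g,1}=[\HH_{sm}(2g+2,1)/B_2]$, reads off $c_1(\mathcal{O}_{\H_{g,1}}(\H_{g,w}))=\frac{g}{2}l_1-\frac{g+2}{2}l_2$ from the $T$-weight of the equation $s$, and invokes $\sigma_w^*c_1(\mathcal{O}(\sigma_w))=-\psi$ (the normal bundle of a section is the relative tangent line), obtaining $\psi=-\frac{g}{2}l_1+\frac{g+2}{2}l_2$; it then exhibits this class as $-l_3$ for an explicit generator $l_3$ of order $4g(2g+1)$ by massaging the relations, whereas you reduce to $\Z[l_1]/(4g(2g+1)l_1)$ and run a gcd check --- your coefficient $(2g^3-g^2+g-1)$ and the coprimality verifications modulo $2$, $g$ and $2g+1$ are all correct, and checking those three moduli does suffice since every prime dividing $4g(2g+1)$ divides one of them. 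Your ramification computation $u=cy^2$ is a legitimate alternative derivation of the weight (it essentially re-derives why the coordinate $s$ carries weight $t_0^{g/2}t_1^{-(g+2)/2}$), and it buys a self-contained local argument at the cost of delicate sign bookkeeping, where in fact you slip: your $\psi=\frac{g-2}{2}l_1-\frac{g}{2}l_2$ differs from the paper's expression exactly by $[L]_T=(g-1)l_1-(g+1)l_2$, because you use the cotangent-line weight for $u$ but the transformation weight for $c$; with one consistent convention the $c$-term enters with the opposite sign and you land on the paper's class on the nose. Fortunately the discrepancy is invisible: $[L]_T$ vanishes in $A^*_T(U_w)$ (the function $h\mapsto h(0,1)$ is invertible on $U_w$, trivializing the corresponding equivariant line bundle), so your class and the paper's agree in the Chow ring --- and even a global sign $\psi\mapsto-\psi$ would affect neither generation nor the presentation $\Z[\psi]/(4g(2g+1)\psi)$. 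So your proof goes through as a genuinely different, more computational route to the same weight.
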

\begin{proof}

According to the degree one relations, we can rewrite these two relations as follows
\[\begin{cases}
(l_1 - l_2) + (-2)\left[\left(1 - \dfrac{g}{2}\right)l_1 + \dfrac{g}{2}l_2\right] = 0, \\
4g(2g + 1)\left[\left(1 - \dfrac{g}{2}\right)l_1 + \dfrac{g}{2}l_2\right] = 0.
\end{cases}\]
By setting
\begin{align*}
l_3 &\coloneq \left[\left(1 - \dfrac{g}{2}\right)l_1 + \dfrac{g}{2}l_2\right] + \underbrace{\left[(g - 1)l_1 - (g + 1)l_2\right]}_{\text{relation}} \\
&= \dfrac{g}{2}l_1 - \dfrac{g + 2}{2}l_2,
\end{align*}
we get a new generator of $\Pic(\H_{g,w})$ with order $4g(2g + 1)$.

{\bf Claim.} The pullback of $\psi$ to $\H_{g,w}$ has Chow class
$-{g\over{2}}l_1 + {g+2\over{2}}l_2$.

{\bf Proof of Claim.} The restriction of $\psi$ to $\H_{g,w}$
is the Chow class which associates to the Weierstrass-pointed hyperelliptic
curves $(C \to S, \sigma_w)$ the class $\sigma_w^*(c_1({\mathcal O}(-\sigma_w)))$. On the
one hand, we may also consider the class $c_1({\mathcal O}_{\H_{g,1}}(\H_{g,w}))$. Its restriction to a family $(C \to S, \sigma_w)$ is the class
$\sigma_w^*(c_1({\mathcal O}(\sigma_w))) = -\psi$.
On the other hand, we know that $\H_{g,w}$ is defined by the equation
$s=0$ in the quotient stack $[\HH_{sm}(2g+2,1)/B_2]$. This equation
has $T$-weight $t_0^{g\over{2}} t_1^{-{g+2\over{2}}}$. Thus,
$c_1({\mathcal O}_{\H_{g,1}}(\H_{g,w})) = {g\over{2}}l_1 - {g+2\over{2}}l_2
\in A^*_T(\HH_{sm}^w(2g + 2,1)) = A^*(\H_{g,w})$. 
The claim now follows.
\end{proof}

\subsubsection{$2$-Weierstrass-pointed case}
By Proposition \ref{action-2pt}, $A^*(\H_{g,2}^w) = A^*_T(U_{w,2})$
where $U_{w,2} = (\Aff(2g) \setminus \Delta') \setminus (L_0 \cup L_\infty)$
and $T$ acts on $\Aff(2g)$ by
$$(t_0,t_1) \cdot h(x_0,x_1) = (t_0t_1)^{g-1} h(x_0/t_0, x_1/t_1).$$

By Proposition \ref{prop.disc} with 
 $N =2g$, $a = b = g - 1$ and $(\alpha_0,\alpha_1) = (1,0)$, $(\beta_0,\beta_1) = (0,1)$ we have that
\[A_T^*(\Aff(2g)\setminus \Delta') = \dfrac{\Z[l_1,l_2][\xi]}{(\alpha_{1,0}(\xi),\alpha_{1,1}(\xi),p(\xi),(g - 1)(l_1 + l_2) - \xi)}\]
where 
\begin{align*}
& p((g - 1)(l_1 + l_2)) = \prod_{i = 0}^{2g}[(-g + i - 1)l_1 + (g - i - 1)l_2],\\
& \alpha_{1,0}((g - 1)(l_1 + l_2)) = -2(2g - 1)(l_1 + l_2), \\
& \alpha_{1,1}((g - 1)(l_1 + l_2)) = (g - 1)(g - 2)(l_1 + l_2)^2 - 4g(g - 1)l_1l_2.
\end{align*}
The hyperplanes $L_0,L_\infty$ are defined by the vanishing of the
coefficients of $x_1^{2g},x_0^{2g}$ respectively, and their equivariant fundamental classes are
$$[L_0]_T = (g - 1)l_1 - (g + 1)l_2,$$
and
$$[L_\infty]_T =  -(g + 1)l_1 + (g - 1)l_2.$$

By a similar argument used in the one-pointed case, we conclude
that
\[A_T^*(U_{w,2}) = \dfrac{\Z[l_1,l_2]}{(\alpha_{1,0}(\xi),
  \alpha_{1,1}(\xi),p(\xi),(g-1)l_1 -(g+1)l_2, -(g+1)l_1 + (g-1)l_2)}\]
where $\xi = (g-1)(l_1 + l_2)$.

\begin{prop}
The polynomials $p((g - 1)(l_1 + l_2))$, $\alpha_{1,0}((g - 1)(l_1 + l_2))$ and the degree two relation $\alpha_{1,1}((g - 1)(l_1 + l_2))$ are all in the ideal generated by the classes of two hyperplanes $[L_0]_T,[L_\infty]_T$.
\end{prop}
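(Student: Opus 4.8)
The plan is to exploit homogeneity. After eliminating $\xi = (g-1)(l_1+l_2)$, each of the three relations is a homogeneous polynomial in $l_1,l_2$ (of degrees $2g+1$, $1$, and $2$ respectively), while the two ideal generators $[L_0]_T = (g-1)l_1 - (g+1)l_2$ and $[L_\infty]_T = -(g+1)l_1 + (g-1)l_2$ are homogeneous linear forms. Since the ideal they generate is homogeneous, membership for a homogeneous element of degree $d$ is equivalent to writing it as a combination of $[L_0]_T$ and $[L_\infty]_T$ with homogeneous coefficients of degree $d-1$. It therefore suffices to exhibit one such explicit expression in each case.

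I would dispatch $p((g-1)(l_1+l_2)) = \prod_{i=0}^{2g}[(i-g-1)l_1 + (g-i-1)l_2]$ first, by inspecting individual factors: the $i=0$ factor equals $-(g+1)l_1 + (g-1)l_2 = [L_\infty]_T$, and the $i=2g$ factor equals $(g-1)l_1 - (g+1)l_2 = [L_0]_T$. Thus $p$ is already divisible by each hyperplane class, and membership is immediate. For the degree-one relation I would use the identity $[L_0]_T + [L_\infty]_T = -2(l_1+l_2)$, whence $\alpha_{1,0}((g-1)(l_1+l_2)) = -2(2g-1)(l_1+l_2) = (2g-1)([L_0]_T + [L_\infty]_T)$.

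The degree-two relation $\alpha_{1,1}((g-1)(l_1+l_2))$ is the substantive step. I would look for homogeneous linear coefficients $u = a l_1 + b l_2$ and $v = c l_1 + d l_2$ with $\alpha_{1,1} = u\,[L_0]_T + v\,[L_\infty]_T$, and compare the coefficients of $l_1^2$, $l_1 l_2$, $l_2^2$ to obtain a linear system in $a,b,c,d$. Over $\Q$ solvability is automatic, since $[L_0]_T,[L_\infty]_T$ are independent (the determinant $(g-1)^2 - (g+1)^2 = -4g$ is nonzero). The main obstacle, and the reason the even-genus hypothesis matters here, is integrality: the natural symmetric ansatz $v(l_1,l_2) = u(l_2,l_1)$ suggested by the $l_1 \leftrightarrow l_2$ symmetry of $\alpha_{1,1}$ forces coefficients such as $\frac{3(g-1)}{2}$, which are non-integral when $g$ is even. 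I would resolve this by breaking the symmetry: solving the system pins down the sums $a+d = 3(g-1)$ and $b+c = g-1$ but leaves one parameter free, and a convenient integral choice (for instance $c=0$) yields the witness $\alpha_{1,1} = [(g-2)l_1 + (g-1)l_2]\,[L_0]_T + (2g-1)l_2\,[L_\infty]_T$, which I would confirm by direct expansion. This completes all three membership checks.
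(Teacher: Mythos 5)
Your proof is correct and takes essentially the same approach as the paper: each membership is established by an explicit combination, with the identical identity $\alpha_{1,0}((g-1)(l_1+l_2)) = (2g-1)\bigl([L_0]_T + [L_\infty]_T\bigr)$, a linear factor of $p$ recognized as a hyperplane class (you correctly identify the $i=0$ factor as $[L_\infty]_T$ and the $i=2g$ factor as $[L_0]_T$, where the paper's proof in fact swaps these labels relative to its own definitions), and an explicit decomposition of $\alpha_{1,1}$. Your witness $\alpha_{1,1} = \bigl[(g-2)l_1+(g-1)l_2\bigr]\,[L_0]_T + (2g-1)l_2\,[L_\infty]_T$ checks out by direct expansion and is simply a different integral point of the same one-parameter solution family (with $a+d = 3(g-1)$, $b+c = g-1$, as you note) as the paper's decomposition involving $\frac{(g-1)(g-2)}{2}\cdot 2(l_1+l_2)^2$; the only quibble is your aside that the even-genus hypothesis matters for integrality here --- since $(g-1)(g-2)$ is a product of consecutive integers, both decompositions are integral for every $g$, and only the symmetric ansatz $v(l_1,l_2)=u(l_2,l_1)$ fails for even $g$.
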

\begin{proof}
Notice that $[L_0]_T + [L_\infty]_T = -2(l_1 + l_2)$ so $\alpha_{1,0}((g-1)(l_1 + l_2) = (2g-1)([L_0]_T + [L_\infty]_T$. 
  The first term in the product of $p((g - 1)(l_1 + l_2))$ associated to $i = 0$ is $-(g + 1)l_1 + (g - 1)l_2 = [L_0]_T$. 
   To see that  $\alpha_{1,1}$ is in the ideal generated by $[L_0]_T, [L_\infty]_T$  note that 
\begin{align*}
&\phantom{{}={}}\alpha_{1,1}((g - 1)(l_1 + l_2)) \\
&= (g - 1)(g - 2)(l_1 + l_2)^2 - 4g(g - 1)l_1l_2 \\
&= \left[\dfrac{(g - 1)(g - 2)}{2}\right]2(l_1 + l_2)^2 + (g - 1)^2l_2[L_\infty]_T + (g^2 - 1)l_2[L_0]_T. \qedhere
\end{align*}
\end{proof}

The integral Chow ring of $H_{g,2}^w$ can be expressed as
\[A^*(\H_{g,2}^w) = \dfrac{\Z[l_1,l_2]}{((g - 1)l_1 - (g + 1)l_2,2(l_1 + l_2))}.\]

In particular, the Picard group of $\H_{g,2}^w$ can be computed as
\begin{align*}
\Pic(\H_{g,2}^w) &= \dfrac{\Z l_1 \oplus \Z l_2}{((g - 1)l_1 - (g + 1)l_2,2(l_1 + l_2))} \cong \dfrac{\Z l_2}{(4g l_2)} \cong \dfrac{\Z l_1}{(4g l_1)},
\end{align*}
meaning that the integral Picard group is cyclic of order $4g$. To obtain a natural generator of the Picard group  we can rewrite the degree one relations as
\[\begin{cases}
-2\left(l_1 + \dfrac{g}{2}(l_2 - l_1)\right) + (l_1 - l_2) = 0, \\
4\left(l_1 + \dfrac{g}{2}(l_2 - l_1)\right) + 2(g - 1)(l_1 - l_2) = 0.
\end{cases}\]
Thus
\begin{align*}
l_3 &\coloneq \left(l_1 + \dfrac{g}{2}(l_2 - l_1)\right) + \underbrace{(g - 1)l_1 - (g + 1)l_2}_{\text{relation}} \\
&= \dfrac{g}{2}l_1 - \dfrac{g + 2}{2}l_2
\end{align*}
can also be realized as a generator of the Picard group. Once again we can check
that $-l_3 = \psi_\infty$ where $\psi_\infty$ is the $\psi$-class of the section
  which corresponds to the section  $s_\infty$ in the presentation of
  $\H_{g,2}^w$ as a quotient of the closed subvariety
  $$\HH_{sm}^w(2g+2,2) \subset  \HH_{sm}(2g+2,2) = \{(f,s_0,s_\infty)|f(0,1) =s_0^2, f(1,0) = s^2_\infty\}$$ described in Section~\ref{sec.2weierstrass}.
Moreover, by symmetry, it is easy to check that
$l_3' = -\frac{g + 2}{2}l_1 + \frac{g}{2}l_2$ also generates the Picard group.
Now we have $-l_3' = \psi_0$ where $\psi_0$ corresponds to the section $s_0$.

\subsection{Integral Chow ring of $\H_{g,n}^w$ for odd genus $g$ and $n = 1,2$}\hspace*{\fill}

\noindent We will then finish the computation of the integral Chow ring of $\H_{g,n}^w$ for odd genus $g$ and $n = 1,2$.

\subsubsection{$1$-Weierstrass-pointed case}
Recall that by Proposition \ref{action-1pt} if the genus $g$ is odd,
then $A^*(\H_{g,w})=A^*_T(U_w)$ where $U_w = (\Aff(2g + 1)\setminus \Delta') \setminus L$ with the action of $T = \G_m^2$ on $\Aff(2g + 1)$ defined by
\[(\alpha,\rho)\cdot f(x_0,x_1) = \alpha^{-2}\rho^{g}f(x_0/\rho,x_1).\]

Then by Proposition \ref{prop.disc} with
 $N = 2g + 1$, $a = -2,b = g$ and $(\alpha_0,\alpha_1) = (0,1), (\beta_0,\beta_1) = (0,0)$ we have 
\begin{align*}
A_T^*(\Aff(2g + 1) \setminus \Delta') &= \dfrac{\Z[l_1,l_2][\xi]}{\left(\alpha_{1,0}(\xi),\alpha_{1,1}(\xi),p(\xi),-2l_1 + gl_2 - \xi\right)}
\end{align*}
where
\[p(\xi) = \prod_{i = 0}^{2g + 1} [\xi - (2g - i + 1)l_2].\]
By substituting $\xi$ by $-2l_1 + gl_2$, we can write the generators of ideal as
\begin{align*}
& \alpha_{1,0}(-2l_1 + gl_2) = -[2g(l_1 + l_2) + 6gl_1], \\
& \alpha_{1,1}(-2l_1 + gl_2) = (-2l_1 + gl_2)^2 - l_2(-2l_1 + gl_2).
\end{align*}
The hyperplane $L$ is defined by the vanishing of the coefficient of $x_1^{2g+1}$and with the given weights its equivariant fundamental class is
$$[L]_T = -2l_1 + gl_2.$$
Therefore,
\[A^*_T(U_{w}) = \dfrac{\Z[l_1,l_2]}{(\alpha_{1,0}(-2l_1 + gl_2),\alpha_{1,1}(-2l_1 + gl_2),-2l_1 + gl_2,p(-2l_1 + gl_2))}.\]
But the following proposition can be easily checked.
\begin{prop}
The polynomial $p(-2l_1 + gl_2)$ and $\alpha_{1,1}$ are both in the ideal generated by $\alpha_{1,0}(-2l_1 + gl_2)$ and $-2l_1 + gl_2$.
\end{prop}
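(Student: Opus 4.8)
The plan is to verify two membership claims in the ideal $I = (\alpha_{1,0}(-2l_1 + gl_2),\, -2l_1 + gl_2)$ of $\Z[l_1,l_2]$, namely that $\alpha_{1,1}(-2l_1+gl_2)$ and $p(-2l_1+gl_2)$ both lie in $I$. First I would handle $\alpha_{1,1}$. Since the linear form $h \coloneq -2l_1 + gl_2 = [L]_T$ is one of the two generators of $I$, and $\alpha_{1,1}(-2l_1+gl_2) = h^2 - l_2\cdot h = h(h - l_2)$ is visibly a multiple of $h$, it lies in $I$ immediately; no use of $\alpha_{1,0}$ is even needed for this term. This is the easy half, mirroring the even-genus computation where $\alpha_{1,1}$ factored through the hyperplane class.

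Next I would dispose of $p(-2l_1+gl_2)$. Substituting $\xi = -2l_1 + gl_2$ into $p(\xi) = \prod_{i=0}^{2g+1}[\xi - (2g-i+1)l_2]$ gives $\prod_{i=0}^{2g+1}[-2l_1 + gl_2 - (2g-i+1)l_2] = \prod_{i=0}^{2g+1}[-2l_1 + (i - g - 1)l_2]$. The key observation, as in the even-genus case, is that one factor of this product is itself a multiple of a generator of $I$: when $i = g+1$ the factor is exactly $-2l_1 + 0\cdot l_2 = -2l_1$, which however is not obviously in $I$; the cleaner choice is to look for the factor equal (up to scalar) to $h = -2l_1 + gl_2$, which occurs when $(i-g-1) = g$, i.e. $i = 2g+1$, giving the factor $-2l_1 + gl_2 = h$ exactly. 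Thus the $(i=2g+1)$-th factor of $p(-2l_1+gl_2)$ equals the generator $h = -2l_1 + gl_2$, so the entire product is divisible by $h$ and hence lies in $I$.

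The main obstacle, such as it is, lies only in correctly tracking the weight data through Proposition~\ref{prop.disc} with the odd-genus parameters $N = 2g+1$, $a = -2$, $b = g$, $(\alpha_0,\alpha_1) = (0,1)$, $(\beta_0,\beta_1) = (0,0)$, so that the substituted forms of $p(\xi)$, $\alpha_{1,0}$, $\alpha_{1,1}$, and $[L]_T$ are recorded without sign or index errors; once the explicit expressions above are in hand, both membership claims reduce to spotting a single divisible factor, so no Gr\"obner-basis or Smith-normal-form argument is required here. I would therefore present the proof as two short displayed identities exhibiting the relevant cofactors, exactly parallel to the pair of propositions already proved in the even-genus subsection.
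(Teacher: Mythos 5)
Your proof is correct and is essentially identical to the paper's: the paper likewise observes that $\alpha_{1,1}(\xi) = \xi^2 - l_2\xi$ becomes a visible multiple of the generator $-2l_1+gl_2$ upon substitution, and that the $i = 2g+1$ factor of $p(-2l_1+gl_2) = \prod_{i=0}^{2g+1}\left[-2l_1+(i-g-1)l_2\right]$ equals $-2l_1+gl_2$ exactly, so the whole product lies in the ideal. Your brief detour through the $i = g+1$ factor is harmless, and your instinct to discard it was right, since $-2l_1$ alone does not lie in the ideal.
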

\begin{proof}
It is obvious that $\alpha_{1,1}$ is in the ideal generated by $-2l_1 + gl_2$ since $\alpha_{1,1} = \xi^2 - l_2\xi$. For $p(\xi)$, we can write
\begin{align*}
p(-2l_1 + gl_2) &= \prod_{i = 0}^{2g + 1}[-2l_1 + (-g + i - 1)l_2] \\
&= (-2l_1 + gl_2)\prod_{i = 0}^{2g}[-2l_1 + (-g + i - 1)l_2],
\end{align*}
and thus it is also in the ideal generated by $-2l_1 + gl_2$.
\end{proof}

Therefore for odd genus $g$,
\[A^*(\H_{g,w}) = \dfrac{\Z[l_1,l_2]}{(8gl_1 + 2gl_2,-2l_1 + gl_2)}.\]
Likewise, we have the natural ring homomorphism
\[\begin{tikzcd}[row sep = huge]
A^*(\H_{g}) = \dfrac{\Z[\tau,c_2,c_3]}{(4(2g + 1)\tau,8\tau^2 - 2(g^2 - 1)c_2,2c_3)} \ar[d,"\substack{\tau \mapsto l_1 \\ c_2 \mapsto -l_2^2 \\ c_3 \mapsto 0}"] \\
A^*(\H_{g,w}) = \dfrac{\Z[l_1,l_2]}{(8gl_1 + 2gl_2,-2l_1 + gl_2)}.
\end{tikzcd}\]
It can be checked that the image of generators $f_1(\tau) = 4(2g + 1)\tau$, $f_2(\tau,c_2) = 8\tau^2 - 2(g^2 - 1)c_2$ under the ring homomorphism are both vanishing in $A^*(\H_{g,w})$ since
\begin{align*}
f_1(l_1) &= 4(2g + 1)l_1 \\
&= 1 \cdot g_1 + (-2)\cdot g_2 
\end{align*}
and
\begin{align*}
f_2(l_1,-l_2^2) &= 8l_1^2 + 2(g^2 - 1)l_2^2 \\
&= l_2 \cdot g_1 + (-4l_1 + 2gl_2)\cdot g_2
\end{align*}
where $g_1 \coloneq 8gl_1 + 2gl_2$ and $g_2 \coloneq -2l_1 + gl_2$ are the generators of the ideal for $A^*(\H_{g,w})$. 

Moreover it can be checked similarly that the ring map is injective on Picard groups. 

In particular, by computing the Smith normal form of the degree one relations
and changing one of the generators to 
\[l_3' \coloneq l_1 - \left(\dfrac{g - 1}{2}\right)l_2,\]
we can express the integral Picard group of $\H_{g,w}$ as follows
\begin{align*}
\Pic(\H_{g,w}) &= \dfrac{\Z l_2 \oplus \Z l_3'}{(l_2 - 2l_3',4g(2g + 1)l_3')} \cong \dfrac{\Z l_3'}{(4g(2g + 1)l_3')}
\end{align*}
which means that the integral Picard group of $\H_{g,w}$ for odd genus $g$ is also cyclic of order $4g(2g + 1)$. \\

{\noindent\bf Natural generator for $A^*(\H_{g,w})$.}
There are multiple integral linear combinations of $l_1, l_2$
which generate the integral Picard group of $\H_{g,w}$. We now show that the
restriction to $\H_{g,w}$ of $\psi$-class on $\H_{g,1}$ generates $A^*(\H_{g,w})$.
\begin{cor}
  $A^*(\H_{g,w})$ is generated by $\psi$ and consequently
  \[A^*(\H_{g,w}) = \dfrac{\Z[\psi]}{4g(2g+1)\psi}.\]
\end{cor}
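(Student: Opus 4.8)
The plan is to follow the even-genus corollary verbatim in structure: compute the restriction of $\psi$ to $\H_{g,w}$ as an explicit $\Z$-linear combination of $l_1,l_2$, recognize it as the negative of the generator $l_3'$ found above, and then use one of the two defining relations to eliminate a variable and collapse the two-variable presentation to a one-variable one.

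First I would prove the analogue of the Claim, namely that the restriction of $\psi$ to $\H_{g,w}$ is $l_1 - \frac{g+1}{2}l_2$ (note $\frac{g+1}{2}\in\Z$ since $g$ is odd). As in the even case, the class $c_1(\mathcal{O}_{\H_{g,1}}(\H_{g,w}))$ restricts on a Weierstrass-pointed family $(C \to S,\sigma_w)$ to $\sigma_w^* c_1(\mathcal{O}(\sigma_w)) = -\psi$, so it suffices to compute $c_1(\mathcal{O}_{\H_{g,1}}(\H_{g,w}))$ equivariantly. Since $\H_{g,w}$ is the locus $s = 0$ in the presentation $[\HH_{sm}(2g+2,1)/(\G_m \times (B_2/\G_m))]$, this class is the first Chern class of the weight by which $s$ transforms. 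For odd $g$ that weight is $\alpha^{-1}\rho^{(g+1)/2}$, which gives $-l_1 + \frac{g+1}{2}l_2$ under the identifications $c_1(\lambda_1) = l_1$, $c_1(\lambda_2) = l_2$; hence $-\psi = -l_1 + \frac{g+1}{2}l_2$, and the Claim follows.

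Next I would identify $\psi$ with a generator. Subtracting, $\psi - l_3' = -l_2$, and the defining relation $-2l_1 + gl_2$ equals $l_2 - 2l_3'$, so that $l_2 = 2l_3'$ in $\Pic(\H_{g,w})$ and therefore $\psi = l_3' - l_2 = -l_3'$. Since $l_3'$ generates the cyclic group $\Pic(\H_{g,w})$ of order $4g(2g+1)$, so does $\psi$. Finally, to upgrade from the Picard group to the whole ring, I would use that same relation to eliminate $l_2 = 2l_3' = -2\psi$ (and correspondingly $l_1 = gl_3' = -g\psi$) from the presentation $A^*(\H_{g,w}) = \Z[l_1,l_2]/(8gl_1 + 2gl_2,\, -2l_1 + gl_2)$; the remaining relation $8gl_1 + 2gl_2$ becomes $4g(2g+1)l_3' = -4g(2g+1)\psi$, yielding $A^*(\H_{g,w}) = \Z[\psi]/(4g(2g+1)\psi)$.

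The step I expect to be the main obstacle is the sign-and-weight bookkeeping in the Claim: reading off the correct $T$-weight of the equation $s = 0$ from the odd-genus action and reconciling it with the convention $\psi = \sigma_w^* c_1(\mathcal{O}(-\sigma_w))$, so that the resulting class is precisely $-l_3'$ and its primitivity in $\Pic(\H_{g,w})$ is manifest. Everything after that is the purely algebraic elimination already carried out for the Smith normal form.
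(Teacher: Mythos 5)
Your proposal is correct and takes essentially the same route as the paper: the identical key step of computing $c_1(\mathcal{O}_{\H_{g,1}}(\H_{g,w}))$ from the $T$-weight $\alpha^{-1}\rho^{(g+1)/2}$ of the equation $s=0$ to get $\psi = l_1 - \frac{g+1}{2}l_2$, followed by the same linear elimination via $l_2 = 2l_3'$ that the paper carries out in its Smith normal form computation. The only cosmetic difference is that the paper defines $l_3 \coloneq l_3' + (-2l_1 + gl_2)$ so that $\psi = -l_3$ holds on the nose, whereas you work modulo the relation to conclude $\psi = -l_3'$ in $\Pic(\H_{g,w})$ --- an equivalent bookkeeping choice.
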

\begin{proof}
Set 
\begin{align*}
l_3 &\coloneq l_3' + \underbrace{\left[(-2)l_1 + gl_2\right]}_{\text{relation}} \\
&= (-1)l_1 + \left(\dfrac{g + 1}{2}\right)l_2,
\end{align*}

{\bf Claim.} The pullback of $\psi$ to $\H_{g,w}$ has Chow class
$l_1 - {g+1\over{2}}l_2$.

{\bf Proof of Claim.} As in the even genus case, the restriction of $\psi$ to $\H_{g,w}$
is the Chow class which associates to the Weierstrass-pointed hyperelliptic
curves $(C \to S, \sigma_w)$ the class $\sigma_w^*(c_1{\mathcal O}(-\sigma_w))$. On the
one hand, we may also consider the class $c_1({\mathcal O}_{\H_{g,1}}(\H_{g,w}))$. Its restriction to a family $(C \to S, \sigma_w)$ is the class
$\sigma_w^*(c_1({\mathcal O}(\sigma_w))) = -\psi$.
On the other hand we know that $\H_{g,w}$ is defined by the equation
$s=0$ in the quotient stack $[\HH_{sm}(2g+2,1)/(\G_m \times (B_2/\G_m))]$. This equation
has $T$-weight $\alpha^{-1} \rho^{{g+1\over{2}}}$. Thus,
$c_1({\mathcal O}_{\H_{g,1}}(\H_{g,w})) = -l_1 + {g+1\over{2}}l_2 \in A^*_T(\HH_{g,w}) = A^*(\H_{g,w})$.
The claim now follows.
\end{proof}

\subsubsection{$2$-Weierstrass-pointed case}
By Proposition \ref{action-2pt}, when $g$ is odd, we have $A^*(\H_{g,2}^w) = A^*_T(U_{w,2})$
where $U_{w,2} = (\Aff(2g) \setminus \Delta') \setminus (L_0 \cup L_\infty)$
and $T$ acts on $\Aff(2g)$ by
$$(\alpha,\rho) \cdot h(x_0,x_1) = \alpha^{-2} \rho^g h(x_0/\rho, x_1).$$

By Proposition \ref{prop.disc} with 
$N = 2g$, $a = -2, b = g$ and $(\alpha_0,\alpha_1) = (0,1),(\beta_0,\beta_1) = (0,0)$ we have that
\[A_T^*(\Aff(2g) \setminus \Delta') = \dfrac{\Z[l_1,l_2][\xi]}{(\alpha_{1,0}(\xi),\alpha_{1,1}(\xi),p(\xi),-2l_1 + gl_2 - \xi)}\]
where by substituting $\xi = -2l_1 + gl_2$, we have
\begin{align*}
& p(-2l_1 + gl_2) = \prod_{i = 0}^{2g}[-2l_1 - (g - i)l_2], \\
& \alpha_{1,0}(-2l_1 + gl_2) = -4(2g - 1)l_1, \\
& \alpha_{1,1}(-2l_1 + gl_2) = 4l_1^2 + g(g - 1)l_2^2 - 2(2g - 1)l_1l_2.
\end{align*}
The hyperplanes $L_0,L_\infty$ defined by setting the coefficients of $x_1^{2g},x_0^{2g}$ to zero respectively have the following equivariant fundamental classes. 
$$[L_0]_T= -2l_1 + gl_2,$$
$$[L_\infty]_T 
= -2l_1 - gl_2.$$
It can be checked that $p(-2l_1 + gl_2)$ and $\alpha_{1,0},\alpha_{1,1}$ are all in the ideal generated by the hyperplanes $[L_\infty]_T,[L_0]_T$. Thus
\begin{align*}
A^*(\H_{g,2}^w) &= \dfrac{\Z[l_1,l_2]}{(-2l_1 + gl_2,-2l_1 - gl_2)}.
\end{align*}
In particular, the integral Picard group of $\H_{g,2}^w$ can be computed as
\begin{align*}
\Pic(\H_{g,2}^w) = \dfrac{\Z l_3}{(4gl_3)},
\end{align*}
where $l_3$ can be taken either $l_3 = -l_1 - \frac{g + 1}{2}l_2$ or $l'_3 = -l_1 + \frac{g + 1}{2}l_2$, and both have order $4g$.
Once again it is easy to check that $-l_3$ and $-l'_3$ are the pullbacks of
the two $\psi$-classes from $\H_{g,2}$.

\subsection{Integral Chow ring of $[\M_{0,2g + 2}/S_{2g + 2 - n}]$ for $n = 1,2$}\hspace*{\fill}

\noindent We will now complete the proofs of Theorem \ref{thm.Hgw} and \ref{thm.Hg2w}.

\begin{thm}
The following results hold
\begin{itemize}
\item If $n = 1$, then
\[A^*([\M_{0,2g + 2}/S_{2g + 1}]) = \dfrac{\Z[l]}{(2g(2g + 1)l)}\]
where we can take the generator $l$ to be, for example $l = -l_1 + (g + 1)l_2$.

\item If $n = 2$, then
\[A^*([\M_{0,2g + 2}/S_{2g}]) = \dfrac{\Z[l]}{(2g l)}\]
where we can take the generator $l$ to be, for example either $l = -l_1 + (g + 1)l_2$ or $l = -l_1 - (g + 1)l_2$.
\end{itemize}
\end{thm}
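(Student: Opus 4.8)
The plan is to treat the two cases $n=1$ and $n=2$ exactly in parallel with the computations already carried out for $\H_{g,w}$ and $\H_{g,2}^w$, but now using the torus actions for $[\M_{0,2g+2}/S_{2g+1}]$ and $[\M_{0,2g+2}/S_{2g}]$ recorded in Propositions~\ref{action-1pt} and~\ref{action-2pt}. In both cases the only change relative to the Weierstrass-point computations is the weight of the $\alpha$-factor: the section-squaring constraint forces an $\alpha^{-2}$ in the hyperelliptic actions, whereas here the underlying data is just a divisor point so the action carries only $\alpha^{-1}$. Concretely, for $n=1$ I would apply Proposition~\ref{prop.disc} with $N=2g+1$, $a=-1$, $b=g$, and $(\alpha_0,\alpha_1)=(0,1)$, $(\beta_0,\beta_1)=(0,0)$, so that $\xi=-l_1+gl_2$; for $n=2$ I would apply it with $N=2g$, $a=-1$, $b=g$, and the same $\alpha,\beta$ data, again giving $\xi=-l_1+gl_2$.

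First I would write down $A^*_T(\Aff(N)\setminus\Delta')$ from Proposition~\ref{prop.disc}, substitute the value of $\xi$ in terms of $l_1,l_2$, and simplify the three generators $\alpha_{1,0}(\xi)$, $\alpha_{1,1}(\xi)$, and $p(\xi)$. Next I would compute the $T$-equivariant fundamental classes of the hyperplanes to be removed: for $n=1$ the single hyperplane $L$ (vanishing of the coefficient of $x_1^{2g+1}$) has class $[L]_T=-l_1+gl_2$, and for $n=2$ the two hyperplanes $L_0,L_\infty$ (vanishing of the coefficients of $x_1^{2g}$, $x_0^{2g}$) have classes $[L_0]_T=-l_1+gl_2$ and $[L_\infty]_T=-l_1-gl_2$. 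Then, exactly as in the Weierstrass computations, the localization sequence together with the projection formula shows that removing these hyperplanes amounts to quotienting by the ideal generated by their fundamental classes. The routine algebraic step is to verify that once we adjoin these hyperplane classes, the higher-degree relations $\alpha_{1,1}$ and $p(\xi)$, and the remaining degree-one relation $\alpha_{1,0}$, all fall into the ideal generated by the $[L]_T$'s; since $\xi=[L]_T$ (resp. $\xi=[L_0]_T$) in each case, $\alpha_{1,1}=\xi^2-\cdots$ and $p(\xi)$ visibly contain $\xi$ as a factor of their leading terms, and $\alpha_{1,0}$ is an integer multiple of a combination of the $[L]_T$'s.

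After collapsing to the degree-one relations I would compute the resulting presentation of $\Pic$ via Smith normal form. For $n=1$ the two relations $-l_1+gl_2$ and $2g(l_1+l_2)$ (the reduced form of $\alpha_{1,0}$) should yield a cyclic group of order $2g(2g+1)$, and I would exhibit $l=-l_1+(g+1)l_2$ as a generator by checking its order is exactly $2g(2g+1)$ modulo the relations. For $n=2$ the relations $-l_1+gl_2$ and $-l_1-gl_2$ give $2gl_2\equiv 0$ together with $2l_1\equiv 0$ after adding and subtracting, producing a cyclic group of order $2g$, and I would check that either $l=-l_1+(g+1)l_2$ or $l=-l_1-(g+1)l_2$ generates it. These generators are the natural ones in that they are the $S_{2g+2-n}$-invariant analogues of the $\psi$-class generators on the gerbe covers $\H_{g,n}^w$, differing precisely by the factor of two coming from the $\mu_2$-gerbe structure in diagram~\eqref{diag.main}.

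The computations are essentially mechanical given Proposition~\ref{prop.disc}, so I do not expect a genuine obstacle; the one point requiring care is the bookkeeping of signs and weights when transcribing the $\alpha$-exponent from $-2$ (Weierstrass case) to $-1$ (divisor-point case), since this is exactly what rescales every relation and halves the orders of the Picard groups relative to Theorems~\ref{thm.Hgw} and~\ref{thm.Hg2w}. A secondary check worth recording is the compatibility of the generator $l$ with the $\mu_2$-gerbe maps $\H_{g,w}\to[\M_{0,2g+2}/S_{2g+1}]$ and $\H_{g,2}^w\to[\M_{0,2g+2}/S_{2g}]$: the pullback of $l$ should equal twice the corresponding $\psi$-class generator, which both confirms the factor-of-two discrepancy in the orders ($4g(2g+1)$ versus $2g(2g+1)$, and $4g$ versus $2g$) and pins down the claimed generators.
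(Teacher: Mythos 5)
Your overall route is exactly the paper's: the paper proves this theorem by observing that, given the torus actions of Propositions~\ref{action-1pt} and~\ref{action-2pt} (weight $\alpha^{-1}$ in place of $\alpha^{-2}$), the computation is identical to the odd-genus computations for $\H_{g,w}$ and $\H_{g,2}^w$, i.e.\ one runs Proposition~\ref{prop.disc} with $a=-1$ instead of $a=-2$. Your inputs ($N$, $a$, $b$, the $(\alpha_i,\beta_i)$ data), the hyperplane classes $[L]_T=-l_1+gl_2$, $[L_0]_T=-l_1+gl_2$, $[L_\infty]_T=-l_1-gl_2$, the localization/projection-formula step, the ideal-membership checks for $\alpha_{1,1}$ and $p(\xi)$, and the entire $n=2$ case all agree with the paper.

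However, in the $n=1$ case your stated reduced form of $\alpha_{1,0}$ is wrong, and as written the Smith normal form step would produce the wrong answer. With $N=2g+1$, $T_1=l_2$, $T_2=0$ and $\xi=-l_1+gl_2$, one has
\[
\alpha_{1,0}(\xi)=4g\,\xi-2g(2g+1)l_2=-4gl_1-2gl_2=-\bigl[2g(l_1+l_2)+2gl_1\bigr],
\]
not $\pm 2g(l_1+l_2)$. The pair of relations you wrote, $-l_1+gl_2$ and $2g(l_1+l_2)$, has determinant $\pm 2g(g+1)$ and so presents a cyclic group of order $2g(g+1)$, contradicting the asserted $2g(2g+1)$; the correct pair $-l_1+gl_2$ and $-4gl_1-2gl_2$ has determinant $\pm 2g(2g+1)$ (equivalently, substituting $l_1\equiv gl_2$ gives $\alpha_{1,0}\equiv -2g(2g+1)l_2$), which yields the theorem with $l\equiv l_2$ as generator. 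Compare the paper's odd-genus Weierstrass case, where $a=-2$ gives $\alpha_{1,0}=-[2g(l_1+l_2)+6gl_1]$: halving the $\alpha$-weight changes the correction term $6gl_1$ to $2gl_1$; it does not remove it. Relatedly, your earlier sentence claiming that the remaining degree-one relation $\alpha_{1,0}$ falls into the ideal generated by the hyperplane classes is true for $n=2$, where $\alpha_{1,0}=(2g-1)\bigl([L_0]_T+[L_\infty]_T\bigr)$, but false for $n=1$: there $\alpha_{1,0}$ is not a multiple of $[L]_T$ and is precisely the source of the torsion $2g(2g+1)$ (you do in effect retain it later, so this is a slip of exposition rather than of method). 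With these arithmetic corrections your argument coincides with the paper's; your closing consistency check that $l$ pulls back to $\mp 2\psi$ along the $\mu_2$-gerbe, matching the doubling $2g(2g+1)\mapsto 4g(2g+1)$ and $2g\mapsto 4g$ of the Picard group orders, is sound and goes slightly beyond what the paper records.
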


\begin{proof}
The proof is essentially the same as the computation of the integral Chow ring of $\H_{g,n}^w$ for odd genus $g$ when $n = 1,2$ given the $T$-actions on $U_{w}$ and $U_{w,2}$.
\end{proof}

\section{Proof of Theorem \ref{thm.Hg3w}}

Let's now consider the quotient stack $[\M_{0,2g + 2}/S_{2g + 2 - n}]$ for $n \geq 3$. 

\begin{prop}
  If $n \geq 3$, the stack $[\M_{0,2g+2}/S_{2g+2-n}]$ is represented by an open subset of an affine space. In particular $A^*([\M_{0,2g+2}/S_{2g+2-n}]) = \Z$ if
  $n \geq 3$.
\end{prop}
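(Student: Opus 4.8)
The plan is to use the hypothesis $n \geq 3$ to rigidify the family: three of the marked sections trivialize the underlying twisted $\Pro^1$, so the objects of $[\M_{0,2g+2}/S_{2g+2-n}]$ acquire no nontrivial automorphisms and the stack becomes a fine moduli scheme. Recall that this stack parametrizes tuples $(P \to S, D_{2g+2}, \sigma_1, \ldots, \sigma_n)$ with $P \to S$ a twisted $\Pro^1$, $D_{2g+2} \subset P$ finite and \'etale of degree $2g+2$, and $\sigma_1, \ldots, \sigma_n$ distinct sections lying in $D_{2g+2}$. Since $\PGL_2$ acts simply transitively on ordered triples of distinct points of $\Pro^1$, the three disjoint sections $\sigma_1, \sigma_2, \sigma_3$ admit, fppf-locally on $S$ and then by descent globally, a unique isomorphism $P \cong \Pro^1 \times S$ carrying them to the constant sections $(0:1), (1:1), (1:0)$. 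I would record this as the statement that the stack of twisted $\Pro^1$'s equipped with three disjoint sections is trivial, which makes the forgetful functor to such data representable.

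Next I would make the representing scheme explicit. After the normalization above, the remaining data consists of the $n-3$ distinct sections $\sigma_4, \ldots, \sigma_n$, which are points of $\Pro^1 \setminus \{(0:1),(1:1),(1:0)\} = \Aff^1 \setminus \{0,1\}$, together with the residual reduced divisor $D \coloneq D_{2g+2} - \sum_{i=1}^n \sigma_i$ of degree $2g+2-n$. Because the $2g+2$ points are pairwise distinct, $D$ is disjoint from $\{(0:1),(1:1),(1:0),\sigma_4,\ldots,\sigma_n\}$; in particular $D$ avoids $(1:0)$, hence lies in $\Aff^1$ and is cut out by a unique monic polynomial of degree $2g+2-n$. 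Recording the coordinates of $\sigma_4, \ldots, \sigma_n$ together with the coefficients of this monic polynomial identifies $[\M_{0,2g+2}/S_{2g+2-n}]$ with the open subvariety $U \subset \Aff^{n-3} \times \Aff^{2g+2-n} = \Aff^{2g-1}$ cut out by the conditions that all these points and roots be distinct and avoid the three normalized points. The unordered nature of $D$ absorbs the $S_{2g+2-n}$-action, so no quotient appears.

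Finally I would compute the Chow ring. For any nonempty open $U$ of an affine space $\Aff^N$, the localization sequence combined with homotopy invariance $A^*(\Aff^N) = \Z$ shows that the restriction $A^*(\Aff^N) \to A^*(U)$ is surjective, and irreducibility of $U$ then forces $A^*(U) = \Z$, concentrated in degree zero. Taking $N = 2g-1$ yields $A^*([\M_{0,2g+2}/S_{2g+2-n}]) = \Z$.

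I expect the representability step to be the main obstacle: one must argue carefully that three disjoint sections of a twisted $\Pro^1$ yield a canonical \emph{global} trivialization and that the associated functor is genuinely represented by the open subvariety $U$ above, rather than merely sharing its coarse space or its dimension. Once representability is in hand, the identification with an open subset of affine space and the vanishing of the higher Chow groups are both routine.
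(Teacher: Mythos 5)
Your proof is correct, but it establishes representability by a genuinely different route from the paper. The paper never rigidifies the moduli problem: it starts from the explicit scheme model $\M_{0,2g+2} \cong (\Pro^1 \setminus \{0,1,\infty\})^{2g-1} \setminus \Delta$ (three of the $2g+2$ points already normalized to $0,1,\infty$ by the cross-ratio), observes that for $n \geq 3$ the subgroup $S_{2g+2-n} \subset S_{2g+2}$ fixes the three normalized points and therefore acts on the tuple $(p_{n+1},\ldots,p_{2g+2})$ purely by permutation, and concludes from distinctness of the points that this action is free; the quotient stack is then represented by the scheme $\bigl((\Pro^1\setminus\{0,1,\infty\})^{n-3} \times \Sym^{2g+2-n}(\Pro^1\setminus\{0,1,\infty\})\bigr)\setminus\Delta$, open in $\Aff^{2g-1}$. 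You instead work with the modular description $(P \to S, D_{2g+2}, \sigma_1,\ldots,\sigma_n)$ and trivialize the twisted $\Pro^1$ using the first three sections — locally by simple transitivity of $\PGL_2$ on ordered triples, globally because the \emph{uniqueness} of the trivializing isomorphism forces the local trivializations to glue, which is exactly the point you flagged as the main obstacle, and it does go through (note also that distinct sections of the finite \'etale cover $D_{2g+2} \to S$ are automatically disjoint, so the residual divisor $D$ is again finite \'etale). Your monic-polynomial coordinates for $D$ are precisely the paper's symmetric power: $\Sym^{2g+2-n}(\Aff^1) \cong \Aff^{2g+2-n}$, so both arguments land on the same open subset of $\Aff^{2g-1}$. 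What your approach buys is a functorial proof of representability at the level of families, independent of the explicit presentation of $\M_{0,2g+2}$, and it makes visible why the objects have no automorphisms (an automorphism fixing three disjoint sections of a trivialized $\Pro^1$-bundle is the identity); what the paper's approach buys is brevity — it avoids any descent argument for twisted $\Pro^1$'s by reducing everything to the elementary fact that a free action of a finite group on a quasi-projective variety has a scheme quotient representing the quotient stack. The concluding Chow computation (localization plus homotopy invariance gives $A^*(U) = \Z$ for a nonempty open $U$ of affine space) is the same in both.
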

\begin{proof}
 We recall that $\M_{0,2g + 2}$ is
 represented by the the scheme $(\Pro^1 \setminus \{0,1,\infty\})^{2g-1} \setminus \Delta$ where $\Delta$ is the big diagonal. Under this identification
  a point $(\Pro^1,s_1, \ldots , s_{2g+2}) \in \M_{0,2g+2}$ maps
  to $(p_4,\ldots , p_{2g+2})$ where $(0,1,\infty, p_4, \ldots, p_{2g+2})$
  is the image of the $(2g+2)$-tuple $(s_1, \ldots , s_{2g+2})$ under the unique
  automorphism of $\Pro^1$ that takes $(s_1,s_2,s_3)$ to $(0,1,\infty)$. 
  The action of $S_{2g+2}$ on $\M_{0,2g+2}$ which permutes the sections
acts on $(2g-1)$-tuple $(p_4, \ldots , p_{2g+1})$ as follows.
First let $\sigma \in S_{2g + 2}$ act on $(0,1,\infty,p_4,\ldots,p_{2g + 2})$ by permutation and then by apply the unique element of $\PGL_2$ which sends
$(\sigma(0), \sigma(1), \sigma(\infty))$ to $(0,1,\infty)$ to obain
a tuple $(0,1, \infty, q_4, \ldots, q_4)$. Then $\sigma(p_4, \ldots, p_{2g+2})
= (q_4, \ldots, q_{2g+2})$.

For any $n$, we identify $S_{2g+2-n}$ to be subgroup of $S_{2g+2}$ which fixes the first $n$ points. It follows from our description of the action that if $n \geq 3$ then $S_{2g+2-n}$ acts by permuting the tuple $(p_{n + 1}, \ldots , p_{2g+2}) \in (\Pro^1 \setminus \{0,1, \infty\})^{2g + 2 - n}$. Since the points are distinct, the action of
$S_{2g+2-n}$ is free and the quotient is the variety $((\Pro^1 \setminus \{0,1,\infty\})^{n - 3} \times \Sym^{2g + 2 - n}(\Pro^1 \setminus \{0,1,\infty\}))\setminus \Delta$ which is an open subvariety of $\Aff^{2g-1}$.
\end{proof}

\subsection{$3$-Weierstrass-pointed case}
For both $g$ even and odd, Proposition \ref{action-3pt} states that  $A^*(\H_{g,3}^w) = A^*_{\G_m}(U_{w,3})$
where $\G_m$ acts on $\Aff(2g-1)$ by
\[t \cdot h(x_0,x_1) = t^{-2}h(x_0,x_1).\]
With this action each of the hyperplanes
$L_0, L_1, L_\infty$ has the same equivariant fundamental class which is
$-2l$ where $l = c_1(\lambda)$.
It follows easily that 
\[A^*(\H_{g,3}^w) = \dfrac{\Z[l]}{(2l)}.\]
Moreover, the class $(-l)$ is the pullback any of the $\psi$-classes
from $\H_{g,3}$ and so can be viewed as the generator of the integral Chow ring of $\H_{g,3}^w$.

\subsection{The $n$-Weierstrass pointed case for $n > 3$.}
It remains to find a presentation for $A^*(\H_{g,n}^w)$ for $n >3$. In fact, we will compute $A^*(\H_{g,n}^w)$ for $n \geq 3$, but we need our earlier
presentation of $A^*(\H_{g,3}^w)$ to obtain a natural generator when $n > 3$.

Recall that the map $[\M_{0,2g+2}/S_{2g+1}] \to \D_{2g+2}$ realizes the map $[\M_{0,2g+2}/S_{2g+1}] \to [\M_{0,2g+2}/S_{2g+2}]$ as the universal effective Cartier divisor ${\mathcal B} \to \D_{2g+2}$ which is finite and \'etale of degree $2g+2$. Let $\M$ be the pullback of the line bundle ${\mathcal O}(-{\mathcal B})$ to $[\M_{0,2g+2}/S_{2g+1}]$.

\begin{prop}{(Dan Petersen)}\label{prop.danpetersen}
The universal Weierstrass divisor $\H_{g,w}$ is the $\mu_2$-gerbe obtained by taking the square root of $\M$. For $n \geq 1$
the gerbes $\H_{g,n}^w \to [\M_{0,2g+2}/S_{2g+2-n}]$ are the gerbes associated to the square root of the pullback of the line bundle $\M$ to $[\M_{0,2g+2}/S_{2g+2-n}]$.
\end{prop}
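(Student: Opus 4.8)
The plan is to recall how a $\mu_2$-gerbe arises as the stack of square roots of a line bundle, and then to identify the relevant line bundle in each case. Recall that given a line bundle $\M$ on a stack $\mathcal{X}$, the stack of square roots $\sqrt{\M/\mathcal{X}}$ parametrizes pairs $(\mathcal{N}, \phi)$ with $\mathcal{N}$ a line bundle and $\phi \colon \mathcal{N}^{\otimes 2} \xrightarrow{\sim} \M$ an isomorphism; this is a $\mu_2$-gerbe over $\mathcal{X}$. First I would treat the base case $n = 1$, where $[\M_{0,2g+2}/S_{2g+1}] = \D_{2g+2}$ is replaced by the stack $\mathcal{B} \to \D_{2g+2}$ realized as the universal degree-$(2g+2)$ divisor, so that a point is a triple $(P \to S, D_{2g+2}, \sigma)$ with $\sigma$ a section landing in $D_{2g+2}$.

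The key step is to match the data of $\H_{g,w}$ against the square-root data. Using the presentation of $\H_{g,w} \cong \H'_{g,w}$ recalled earlier, a point of $\H_{g,w}$ is the tuple $(P \to S, \L, i\colon \L^2 \hookrightarrow \mathcal{O}_P, \sigma_P, 0)$, and the Weierstrass section is cut out by the vanishing of $j$. I would show that the line bundle $\M = \mathcal{O}(-\mathcal{B})$ pulled back to $\mathcal{B} = [\M_{0,2g+2}/S_{2g+1}]$ restricts along $\sigma_P$ to $\sigma_P^*\L^{\otimes 2}$, i.e. that $\sigma_P^*\M \cong (\sigma_P^*\L)^{\otimes 2}$, so that the extra datum $\L|_{\sigma_P} = \sigma_P^*\L$ together with the isomorphism $i$ furnishes exactly a square root of $\M$. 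Concretely, $\mathcal{O}(-\mathcal{B})$ restricted to the divisor $\mathcal{B}$ is the conormal bundle, and $i\colon \L^{\otimes 2} \hookrightarrow \mathcal{O}_P$ identifies $\L^{\otimes 2}$ with $\mathcal{O}(-D_{2g+2})$ along the branch locus; pulling back along $\sigma_P$ and comparing with the conormal bundle of $\mathcal{B}$ gives the desired isomorphism $(\sigma_P^*\L)^{\otimes 2} \cong \sigma_P^*\M$. This identifies $\H_{g,w}$ with $\sqrt{\M/\mathcal{B}}$ as $\mu_2$-gerbes, since the fiber datum $(\sigma_P^*\L, i_{\sigma_P})$ is precisely a square root and the $\mu_2$-automorphisms match.

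For the general case $n \geq 1$, I would argue that each $\H_{g,n}^w \to [\M_{0,2g+2}/S_{2g+2-n}]$ is obtained from the $n=1$ case by base change along the finite \'etale tower in diagram~\eqref{diag.main}, together with the compatibility of the line bundle $\M$ under pullback. Since taking square roots commutes with flat (in particular \'etale) base change, and since $\H_{g,n}^w$ is obtained by pulling back the Weierstrass divisor along $\H_{g,n}^w \to \H_g$, the gerbe structure is inherited: $\H_{g,n}^w = \sqrt{\M_n/[\M_{0,2g+2}/S_{2g+2-n}]}$ where $\M_n$ is the pullback of $\M$. The main obstacle I anticipate is the careful bookkeeping of the conormal/branch-divisor identification $(\sigma_P^*\L)^{\otimes 2} \cong \sigma_P^*\mathcal{O}(-\mathcal{B})$, including tracking the $\mu_{g+1}$- versus $\mu_{2g+2}$-quotient structures and verifying that the $\mu_2$-automorphism groups are compatibly identified; the base-change step for $n > 1$ should then be formal once the $n=1$ identification and the functoriality of $\sqrt{\,\cdot\,}$ are in place.
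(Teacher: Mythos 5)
Your proposal is correct and takes essentially the same route as the paper: the paper encodes the hyperelliptic datum as a section of the Arsie--Vistoli root gerbe of $\mathcal{O}(-D_{2g+2})$ on $P$ and restricts it along the Weierstrass section to land in the square root of $\M$, which is exactly your identification $(\sigma_P^*\L)^{\otimes 2}\cong \sigma_P^*\M$ unwound through Pernice's presentation, with the case of general $n\geq 1$ likewise dispatched by base change along the \'etale tower. The only step worth making explicit in your write-up is the standard fact that a banding-compatible morphism of $\mu_2$-gerbes over the same base is automatically an equivalence (this is the content behind your remark that ``the $\mu_2$-automorphisms match''), since it is what guarantees that restricting the square-root datum $(\L,i)$ from all of $P$ to the section $\sigma_P$ loses no information.
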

\begin{proof}
  The data $(P \to S, D_{2g+2})$ where $P \to S$ is a twisted $\Pro^1$ and
  $D_{2g+2}$ is a Cartier divisor which is finite and \'etale of degree $2g + 2$
  over $S$ corresponds to a morphism $S \to [\M_{0,2g+2}/S_{2g+2}]$.
  Let ${\mathcal P}$ be the $\mu_2$-gerbe parametrizing the square
  root of ${\mathcal O}(-D_{2g+2})$. Precisely, the line bundle ${\mathcal O}(-D_{2g+2})$ 
  corresponds to the morphism $P \to B\G_m$ and ${\mathcal P}$ is the gerbe obtained by base change along the square root gerbe
  $B \G_m \to B \G_m$ induced by map
  $\G_m \to \G_m$, $\lambda \mapsto \lambda^2$.
  By \cite{ArVi:03} we know that to give a $\mu_2$-cover of $P$ branched along
  $D_{2g+2}$ (\ie a hyperelliptic curve) is equivalent to giving a section $s \colon P \to {\mathcal P}$.

  The map $S \to [\M_{0,2g+2}/S_{2g+2}]$ lifts to a morphism $[\M_{0,2g+2}/S_{2g+1}]$
  when the \'etale covering $D_{2g+2} \to S$ admits a section $\sigma \colon S \to D_{2g+2}$. 
  Putting this together we see that the data of a section of $D_{2g+2} \to S$ together with a section $D_{2g+2} \to {\mathcal P}$ is equivalent to the data
  of double cover of $C \to P$ branched along $D_{2g+2}$ together with a Weierstrass section; \ie a section of $\H_{g,w}(S)$.

  The second statement follows from base change.
\end{proof}

\begin{prop}\label{prop.gerbe}
The $\mu_2$-gerbe $\H_{g,n}^w \to [\M_{0,2g + 2}/S_{2g + 2 - n}]$ is trivial if $n \geq 3$.
\end{prop}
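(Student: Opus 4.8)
The plan is to show that the $\mu_2$-gerbe $\H_{g,n}^w \to [\M_{0,2g+2}/S_{2g+2-n}]$ is trivial for $n \geq 3$ by exhibiting a global square root of the relevant line bundle, which by Proposition~\ref{prop.danpetersen} is exactly what trivializes the gerbe. Recall that $\H_{g,n}^w$ is the gerbe obtained by taking the square root of (the pullback of) the line bundle $\M = {\mathcal O}(-{\mathcal B})$ restricted to the base $[\M_{0,2g+2}/S_{2g+2-n}]$. A $\mu_2$-gerbe arising as the stack of square roots of a line bundle $\M$ is trivial precisely when $\M$ admits a square root in the Picard group of the base, i.e. when the class of $\M$ lies in $2\cdot\Pic([\M_{0,2g+2}/S_{2g+2-n}])$.

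The first step is therefore to understand $\Pic$ of the base. By the proposition immediately preceding this statement, for $n \geq 3$ the stack $[\M_{0,2g+2}/S_{2g+2-n}]$ is represented by an open subvariety of affine space $\Aff^{2g-1}$, and its integral Chow ring — in particular $A^1 = \Pic$ — is trivial (equal to $\Z$ in degree zero, with $A^{>0} = 0$). Consequently $\Pic([\M_{0,2g+2}/S_{2g+2-n}]) = 0$ for $n \geq 3$. This makes the second step immediate: any line bundle on the base, including the pullback of $\M$, is trivial, hence \emph{a fortiori} admits a square root. Concretely, the restriction of ${\mathcal O}(-{\mathcal B})$ to this base is the trivial bundle, so its square-root gerbe is the trivial $\mu_2$-gerbe $B\mu_2 \times [\M_{0,2g+2}/S_{2g+2-n}]$.

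I would organize the argument so that it reads: by Proposition~\ref{prop.danpetersen} the gerbe $\H_{g,n}^w \to [\M_{0,2g+2}/S_{2g+2-n}]$ is the square-root gerbe of the pulled-back line bundle $\M$; a square-root gerbe is trivial as soon as $\M$ has a square root in the Picard group; since $\Pic$ of the base vanishes for $n \geq 3$, the bundle $\M$ is trivial and hence such a root exists. This is consistent with the Chow-ring computation: the presentation $A^*(\H_{g,n}^w) = \Z[\psi]/(2\psi)$ of Theorem~\ref{thm.Hg3w} is exactly the Chow ring of $B\mu_2$ over a point with trivial Chow ring, the generator $\psi$ being the pullback of the canonical degree-one class on $B\mu_2$, subject only to the relation $2\psi = 0$ coming from the $\mu_2$-structure.

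I do not expect a genuine obstacle here, since the key input — vanishing of $\Pic$ of the base — is already established in the preceding proposition, and the reduction of gerbe-triviality to the existence of a square root of the defining line bundle is standard for gerbes of roots. The one point that deserves a sentence of care is the identification of the gerbe with the stack of square roots of $\M$ (so that triviality of $\M$ really does trivialize the gerbe rather than merely splitting it set-theoretically); this is supplied by Proposition~\ref{prop.danpetersen}, which I would cite explicitly. The remaining content is then purely the observation that a trivial line bundle has the trivial square root.
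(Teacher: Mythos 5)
Your proposal is correct and takes essentially the same route as the paper: both cite Proposition \ref{prop.danpetersen} to identify $\H_{g,n}^w$ as the root gerbe of the pullback of $\M$, and both conclude from the preceding proposition (the base is an open subvariety of affine space, hence has trivial Picard group) that this line bundle is trivial and so admits a square root. The paper phrases the last step more tersely---``line bundles over open sets of an affine space are trivial''---but your elaboration via $\Pic = 0$ and the standard criterion that a root gerbe is trivial exactly when the bundle is a square is the same argument.
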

\begin{proof}
The $\mu_2$-gerbe $\H_{g,n}^w \to [\M_{0,2g + 2}/S_{2g + 2 - n}]$ is the gerbe associated to the square root of a line bundle on $[\M_{0,2g + 2}/S_{2g + 2 - n}]$ by Proposition \ref{prop.danpetersen}. Since $[\M_{0,2g + 2}/S_{2g + 2 - n}]$ is represented by an open subvariety of an affine space, and line bundles over open sets of an affine space are trivial, it proves the result.
\end{proof}

It follows that  if $n\geq 3$, $\H_{g,n}^w = [\M_{0,2g + 2}/S_{2g + 2 - n}] \times B\mu_2$ by Proposition \ref{prop.gerbe}. Hence $A^*(\H_{g,n}^w) = A^*(B\mu_2) = \Z[l]/(2l)$ for $n \geq 3$. \\

{\noindent\bf Natural generator for $A^*(\H_{g,n}^w)$ when $n > 3$.} 
As in the case when $n = 1,2,3$, we will show that any restriction of $\psi$-class on $\H_{g,n}$ to $\H_{g,n}^w$ generates the integral Chow ring $A^*(\H_{g,n}^w)$.

\begin{cor}
  For $n \geq 3$, $A^*(\H_{g,n}^w)$ is generated by $\psi$ where
  $\psi$ is the pullback of any $\psi$-class from $\H_{g,n}$. Consequently $A^*(\H_{g,n}^w) = \Z[\psi]/(2\psi)$.
\end{cor}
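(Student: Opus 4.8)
The plan is to reduce the claim for general $n \geq 3$ to the already-established case $n = 3$ by exploiting the forgetful tower of finite \'etale covers. We have already shown, via Propositions \ref{prop.gerbe} and \ref{prop.danpetersen}, that $A^*(\H_{g,n}^w) = \Z[l]/(2l)$ abstractly; the only remaining content is to identify the generator $l$ with the pullback of a $\psi$-class from $\H_{g,n}$. Since the abstract ring is $\Z/2$ in each positive degree, it suffices to check that the chosen $\psi$-class restricts to a \emph{nonzero} class in $A^1(\H_{g,n}^w)$, because the unique nonzero element of $A^1(\H_{g,n}^w) \cong \Z/2$ must be the generator, and its powers then generate all higher degrees.

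\begin{proof}
First I would recall that each forgetful morphism $\pi \colon \H_{g,n}^w \to \H_{g,n-1}^w$ is finite, \'etale, and representable (diagram \eqref{diag.main}). For the case $n = 3$, we have already exhibited an explicit generator of $A^*(\H_{g,3}^w) = \Z[l]/(2l)$ and identified $-l$ with the pullback of any $\psi$-class from $\H_{g,3}$. Now fix $n > 3$ and consider the composite forgetful morphism $p \colon \H_{g,n}^w \to \H_{g,3}^w$ obtained by forgetting the last $n - 3$ Weierstrass sections. I would use the standard comparison of $\psi$-classes under forgetful maps: if $\psi_i$ denotes the $\psi$-class attached to the $i$-th section on $\H_{g,n}$ for some $i \leq 3$, then $p^* \psi_i$ differs from the corresponding $\psi$-class on $\H_{g,n}^w$ only by correction terms supported on the loci where the forgotten sections collide with the $i$-th section. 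But on $\H_{g,n}^w$ all sections are distinct Weierstrass points, so these correction loci are empty and $p^* \psi_i = \psi_i$ exactly.

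Next I would show $\psi_i \neq 0$ in $A^1(\H_{g,n}^w)$. Since $p^* \colon A^*(\H_{g,3}^w) \to A^*(\H_{g,n}^w)$ is the pullback along the finite \'etale, hence representable, morphism $p$, and both rings are isomorphic to $\Z[l]/(2l)$ by the preceding propositions, it suffices to verify that $p^*$ is nonzero in degree one. This follows because $p$ is an isomorphism on the underlying $\mu_2$-gerbe structure: both $\H_{g,3}^w$ and $\H_{g,n}^w$ are, by Proposition \ref{prop.danpetersen}, the square-root gerbes of (pullbacks of) the same line bundle $\M$, and $p$ is compatible with these gerbe structures by the base-change statement in that proposition. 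Hence $p^*$ sends the class $l \in A^1(\H_{g,3}^w)$ pulled from $B\mu_2$ to the corresponding nonzero class in $A^1(\H_{g,n}^w) = \Z/2$. Combining this with the identification $-l = \psi_i$ on $\H_{g,3}^w$ gives $\psi_i = p^*(\psi_i) \neq 0$ on $\H_{g,n}^w$.

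Since $A^1(\H_{g,n}^w) \cong \Z/2$ has a unique nonzero element, the nonvanishing $\psi_i$ must equal the generator. Because the ring is generated in degree one with the single relation $2\psi = 0$, it follows that $\psi$ generates $A^*(\H_{g,n}^w)$ as an algebra and $A^*(\H_{g,n}^w) = \Z[\psi]/(2\psi)$. Finally, by the evident symmetry permuting the Weierstrass sections (the $S_n$-action on the tower), the same conclusion holds for \emph{any} of the $\psi$-classes, not only $\psi_i$ with $i \leq 3$.
\end{proof}

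The step I expect to be the main obstacle is the clean identification $p^* \psi_i = \psi_i$, that is, confirming that the comparison formula for $\psi$-classes under the forgetful map has no surviving correction terms in this Weierstrass setting; one must verify carefully that the boundary/collision loci appearing in the usual $\psi$-class comparison are genuinely empty here because the marked points are constrained to be distinct Weierstrass points rather than arbitrary marked points, so that no section can collide with another.
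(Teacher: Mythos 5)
Your proposal is correct and takes essentially the same route as the paper: the paper reduces to the explicit $n=3$ computation by forgetting one Weierstrass point at a time, using the cartesian diagram of trivial $\mu_2$-gerbes from Proposition \ref{prop.gerbe} to see that each forgetful pullback $f^*$ is an isomorphism on $A^1 \cong \Z/2$ — which is your gerbe-compatibility argument, with single-step inductive maps in place of your composite $p$. Your extra care in checking that $p^*\psi_i = \psi_i$ holds on the nose (the collision loci are empty since the sections are distinct and no stabilization is needed on smooth curves) is a point the paper asserts without comment, and your verification is correct.
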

\begin{proof}
  The proof proceeds by induction and it suffices to
  prove that $\psi \neq 0$ in $\Pic(\H_{g,n}^w)$. We have already proved the statement
  for $n =3$. Assume that the statement has been proved for $n = k+3$ with
  $k > 0$.
By Proposition \ref{prop.gerbe}, we have the following cartesian diagram
\[\begin{tikzcd}
\H_{g,k+4}^w \ar[r,"g_2"] \ar[d,"f"] & \left[(\M_{0,2g + 2}/S_{2g - 2-k})/\mu_2\right] \ar[d] \\
\H_{g,k+3}^w \ar[r,"g_1"] & \left[(\M_{0,2g + 2}/S_{2g - 1-k})/\mu_2\right]
\end{tikzcd}\]
where the map $f$ corresponds to forgetting any of the marked Weierstrass points
and the horizontal maps $g_1,g_2$ are both isomorphisms and $\mu_2$ acts trivially on both schemes $\M_{0,2g + 2}/S_{2g - 1-k}$ and $\M_{0,2g + 2}/S_{2g - 2-k}$. The pullback map on the Chow rings induced by the right vertical map is the identity map
so $f^*$ is an isomorphism and in particular it is not the zero map.
By induction we know that $\psi \neq 0$ in $A^*(\H_{g,k+3}^w)$. Hence
$f^*\psi$ is non-zero in $A^*(\H_{g,k+4}^w)$. Since the pullback of
a $\psi$-class under the map $f$ is a $\psi$-class the result follows.
\end{proof}

%===========bbl============
%bibliography file "integral.bib".

\end{document}